\documentclass[letterpaper, reqno, 12pt]{amsart}

\DeclareFontFamily{U}{matha}{\hyphenchar\font45}
\DeclareFontShape{U}{matha}{m}{n}{
      <5> <6> <7> <8> <9> <10> gen * matha
      <10.95> matha10 <12> <14.4> <17.28> <20.74> <24.88> matha12
      }{}
\DeclareSymbolFont{matha}{U}{matha}{m}{n}
\DeclareFontFamily{U}{mathx}{\hyphenchar\font45}
\DeclareFontShape{U}{mathx}{m}{n}{
      <5> <6> <7> <8> <9> <10>
      <10.95> <12> <14.4> <17.28> <20.74> <24.88>
      mathx10
      }{}
\DeclareSymbolFont{mathx}{U}{mathx}{m}{n}

\DeclareMathSymbol{\obot}         {2}{matha}{"6B}
\DeclareMathSymbol{\bigobot}       {1}{mathx}{"CB}


\usepackage[pdfauthor={Congling Qiu}, 
        pdftitle={???}%
      dvips,colorlinks=true]{hyperref}
\hypersetup{
  bookmarksnumbered=true,
   linkcolor=red,
  citecolor=red,
  pagecolor=black, 
  urlcolor=black,  
}


\usepackage[utf8]{inputenc}

\usepackage{xtab}
\usepackage{amsmath, amssymb, cancel,verbatim,leftidx}
\usepackage[all]{xy}
\usepackage[scr=euler,scrscaled=.96]{mathalfa}
 
\usepackage{enumerate}
\usepackage{mathtools,booktabs}
\usepackage{amscd}
\usepackage{bbm, stmaryrd}
\usepackage{yfonts}
\usepackage{color}


\setlength{\textwidth}{16.5cm}
\setlength{\oddsidemargin}{0cm}
\setlength{\evensidemargin}{0cm}
\setlength{\topmargin}{0cm}
%
\usepackage{epstopdf} 
\usepackage{booktabs}

\usepackage[nameinlink]{cleveref}



\setcounter{part}{-1}
\setcounter{tocdepth}{1}
\newtheorem{teo}{Theorem}[subsection]
\newtheorem{thm}[teo]{Theorem}
\newtheorem{thm*}[teo]{Theorem*}

\newtheorem{lem}[teo]{Lemma}
\newtheorem{cor}[teo]{Corollary}
\newtheorem{defn}[teo]{Definition}

\newtheorem{rmk}[teo]{Remark}

\newtheorem*{hypo*}{Hypothesis}

  \newtheorem{prop}[teo]{Proposition}
    \newtheorem {conj}[teo]{Conjecture}

\numberwithin{equation}{section}

%


  \newcommand{\BA}{{\mathbb {A}}} 
    \newcommand{\BC}{{\mathbb {C}}} 
    \newcommand{\BE}{{\mathbb {E}}} 
     \newcommand{\BH}{{\mathbb {H}}}

    \newcommand{\BQ}{{\mathbb {Q}}} \newcommand{\BR}{{\mathbb {R}}}

     \newcommand{\BZ}{{\mathbb {Z}}}

     \newcommand{\cB}{{\mathcal {B}}}

     \newcommand{\cH}{{\mathcal {H}}}

    \newcommand{\cM}{{\mathcal {M}}} 
    \newcommand{\cO}{{\mathcal {O}}}

    \newcommand{\RM}{{\mathrm {M}}}

    \newcommand{\fc}{{\mathfrak{c}}}

     \newcommand{\fn}{{\mathfrak{n}}}
     \newcommand{\fp}{{\mathfrak{p}}}

    \newcommand{\wt}{\widetilde}\newcommand{\ol}{\overline}
    \newcommand{\wh}{\widehat}

    \newcommand{\incl}{\hookrightarrow}

    \newcommand{\bsl}{\backslash}

  \newcommand{\ep}{\epsilon}
  \newcommand{\vpl}{\varprojlim}
             
 \newcommand{\vil}{\varinjlim}  
 \newcommand{\lb}{\left(} \newcommand{\rb}{\right)}

    \newcommand{\Aut}{{\mathrm{Aut}}}

    \newcommand{\Ch}{{\mathrm{Ch}}}

    \newcommand{\Cl}{{\mathrm{Cl}}}

    \newcommand{\End}{{\mathrm{End}}}

    \newcommand{\Gal}{{\mathrm{Gal}}} \newcommand{\GL}{{\mathrm{GL}}}

    \newcommand{\Hom}{{\mathrm{Hom}}}
    
    \newcommand{\id}{{\mathrm{id}}}

        \newcommand{\DC}{{\mathrm{DC}}}

    \newcommand{\Jac}{{\mathrm{Jac}}} \newcommand{\JL}{{\mathrm{JL}}}

     \newcommand{\Pic}{\mathrm{Pic}}

\newcommand{\rf}{{\mathrm{f}}}

    \newcommand{\Spec}{{\mathrm{Spec}}}

    \newcommand{\vol}{{\mathrm{vol}}}

      \newcommand{\pr}{{\mathfrak{pr}}}



\newcommand\supervisor[1]{\def\@supervisor{#1}}

\newcounter{elno}


\renewcommand{\cong}{\simeq}

\setcounter{tocdepth}{1}

 \begin{document} 
  \title{ Vanishing results for the modified diagonal cycles II: Shimura curves}
  

\author{Congling Qiu}
\address[Congling Qiu]{Department of Mathematics, MIT}
\email{qiuc@mit.edu}

\author{Wei Zhang}
\address[Wei Zhang]{Department of Mathematics, MIT}
\email{weizhang@mit.edu}

\begin{abstract}
We prove vanishing results for the modified diagonal cycles in the Chow groups of the triple products of Shimura curves and their motivic direct summands. In particular we find examples of curves with trivial automorphism groups and vanishing modified diagonal cycles.
\end{abstract} 
\maketitle 
 \tableofcontents

  \section{Introduction}

 \subsection{}\label{vanish}

  For a smooth projective  variety $X$ over a  field $F$,   let $\Ch^i(X)$ be the Chow group  of $X$   of  codimension $i$ cycles with   $\BQ$-coefficients, modulo rational equivalence.  For an algebraic cycle $Z$ on $X$, let $[Z]$  denote the class of  $Z$  in $\Ch^i(X)$.
  The diagonal cycle $\Delta$ on $X^n=X\times\cdots\times X$ ($n$-times) 
  has been studied by many authors. Of particular interest is the modified diagonal cycle in the case $n=3$ and $X$ is a curve (always assumed to be connected), which is  called the Gross--Schoen cycle \cite{GS,Zha10}.    
  When $X$ is a hyperelliptic curve, Gross and Schoen \cite{GS}
 showed that  the  diagonal cycle  modified by a  Weierstrass point vanishes in $\Ch^2(X^3)$. It was once  suspected  that over $\BC$, hyperelliptic curves  are the only curves  of genus $>1$ with vanishing  modified diagonal cycles. 
In  our previous paper \cite{QZ1} and the paper \cite{Lat} by  Laterveer,  certain low-genus examples of non-hyperelliptic $X/\BC$  are proven to have vanishing  modified diagonal cycles in Chow groups. A  special feature of these examples is that they have
 large automorphism groups.  
 Vanishing results in certain quotients of $\Ch^2(X^3)$ were also obtained by other authors, see for example Beauville--Schoen \cite{BS}, as well as the introduction of \cite{QZ1}.)
   In this paper, we give five    examples of plane quartics  over $\BC$  (i.e., non-hyperelliptic of genus $3$)   with vanishing  modified diagonal cycles and   trivial automorphism groups   (see \S\ref {curves of genus 3}).    In particular,  they are neither  curves in our previous work \cite{QZ1}, nor the curves of Laterveer \cite{Lat} and Beauville--Schoen \cite{BS}.

  In \cite[Conjecture 1.4.1]{QZ1}, we also conjectured that there are  non-hyperelliptic curves over $\BC$  of arbitrary large genera  with vanishing  modified diagonal cycles.  
This seems out of reach at this moment. Instead we may ask a similar question for the projection of the diagonal cycle to the  triple product of an effective submotive $M$  of (the Chow motive of)  a curve  $X$. We  are led to a (partial) analog of the aforementioned conjecture, see Conjecture \ref{conj:1}. Recall a  submotive $M=(X,p)$ of $X$  is given by a projector (i.e. idempotent)  $p\in   \Ch^1(X^2)$, viewed as the ring of self-correspondence on the curve $X$.   Its  genus  is defined  to be $$g(M):=\dim p_* H^0(X,\omega_X)/2$$ 
     We   define its diagonal  cycle   to be 
      \begin{align}\label{eq:mot Del}
[\Delta]_{M}:=p^{\otimes 3}_*[\Delta]\in \Ch^2(X^3).
\end{align}
  We especially focus on   isotypic submotives of   Shimura curves.
 We show some nice vanishing properties of their diagonal cycles: 
 vanishing in  arbitrary large genera (Theorem     \ref{thm:2}, note that this is not implied by  the last conjecture); vanishing according to nonvanishing of central $L$-values (Theorem     \ref{thm:3}); 
 vanishing for all but at most one class of  Shimura curves (Theorem     \ref{thm:4}).

     \subsection{}\label{1.2}

 Now we give more details of our results. 
For $e\in \Ch^1(X)$ with $\deg e=1$, the modified diagonal cycle   $[\Delta_{e}]\in \Ch^2(X^3)$ is defined as follows. 
  If $e=[p]$ the class of an $F$-point $p$, we set 
\begin{alignat*}{3}\Delta_{12}&= \{(x,x,p):x\in X\},\quad \Delta_{23}&&= \{(p,x,x):x\in X\}, \quad
\Delta_{31}& &= \{(x,p,x):x\in X\}, \\
\Delta_{1}&=  \{(x,p,p):x\in X\},\quad \ 
\Delta_{2}&&=   \{(p,x,p):x\in X\},\quad  \
\Delta_{3}& &= \{(p,p,x):x\in X\}.
\end{alignat*}
Then $[\Delta_{e}]$ is  the   class  in $\Ch^2(X^3)$ of the algebraic cycle
\begin{align*} 
\Delta-\Delta_{12}-\Delta_{23}-\Delta_{31}+
\Delta_{1}+\Delta_{2}+
\Delta_{3}.
\end{align*}
 In general,   let      $\delta:X\to X^2$ be the   diagonal embedding, and we define via \eqref{eq:mot Del}
  $$[\Delta_{e}]=[\Delta]_{h^{\geq 1}(X)}$$ 
 where  $h^{\geq 1}(X):=(X, [\delta_* X]- [X]\times e)$. 
  In   \cite{QZ1},  we also considered
$$
h^{1}(X):=(X,\delta_{e})
$$
  with
   \begin{equation}
  \label{delta}
\delta_{e}:=[\delta_* X]-(e\times [X] +[X]\times e) .
 \end{equation}  
We showed   that 
 \begin{equation}
  \label{eq:equiv}
  [\Delta_{e}]=0\text{ if and only if }[\Delta]_{h^{ 1}(X)}=0.
   \end{equation}

By  \cite{GS}, we have $[\Delta_{e}]=0$ if $g(X)\leq 1$.
Since the base change to the separable closure of $F$ does not alter the vanishing/nonvanishing of $[\Delta_{e}]$, we have $[\Delta_{e}]=0$ if its geometrically connected components have genera $\leq 1$.
 Assume that geometrically connected components  of $X$
 have genera  $ \geq 2$.    In our previous  paper \cite{QZ1}, an elementary argument  shows that    $[\Delta_{e}]=0$ only if  
 \begin{align}\label{eq:xi}
e=\frac{1}{\deg \omega_X} \omega_X\in \Ch^1(X).
\end{align}  
 However,  different choices of $e$ give isomorphic $h^{\geq 1}(X)$ (in the category of motives).  
In particular, for a motive $M$ that can be realized as a submotive    of a curve,  vanishing  of  $[\Delta]_M$ depends on the  realization of $M$ as a submotive, not just   its isomorphism class.
 A less elementary example is given by Theorem \ref{thm:2}  below (and the discussion following it).

  \subsection{}

  In this paper,  we   consider  submotives of  $h^{ 1}(X)$ which have  the following     nice description. Recall  the
 well-known  isomorphism (see \cite[Proposition 3.3]{Sch})
 $$
 \End \lb h^{ 1}(X) \rb\cong \End (J_X)_\BQ
 $$ 
where an endomorphism of $h^{ 1}(X)$  is  sent to the induced morphism.
   Here  and below, $J_X$
 is the Jacobian variety of $X$.
The  $\BQ$-algebra $\End(J_X)_\BQ$ is essentially  a direct sum of matrix algebras indexed by 
simple isogeny factors of $J_X$
 (see \eqref{eq:EndJ}). 
For a  simple abelian variety $A$,  let $\delta_A=\delta_{X,A}\in \End \lb h^{ 1}(X) \rb$ be the corresponding projector (which is 0 if $A$ is not an isogeny factor of $J_X$). 
For example, if $J_X$ is isogenous to a power  of $A$, then $\delta_A=\delta_e$.
Define the $A$-isotypic component of  $h^{ 1}(X) $ to be
 $$h^A(X):=(X, \delta_A).$$

 Partly motivated by  \cite[Conjecture 1.4.1]{QZ1} and the fact that every abelian variety over $\BC$ is an isogeny factor of the Jacobian variety of some curve over $\BC$, we are led to the following conjecture. 
\begin{conj}\label{conj:1}
 Let $A/\BC$ be  a  simple abelian variety. 
 There exists a  curve $X/\BC$   such that  $h^A(X)\neq 0$ and
$[\Delta]_{h^A(X)}=0.$  Moreover, among  non-hyperelliptic $X$  with $[\Delta]_{h^A(X)}=0$,  
 $g\lb h^A(X)\rb$  is unbounded.
        \end{conj}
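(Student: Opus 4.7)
The conjecture splits into two assertions: (i) for every simple $A/\BC$, existence of a curve $X$ with $h^A(X) \neq 0$ and $[\Delta]_{h^A(X)} = 0$; and (ii) unboundedness of $g(h^A(X))$ over non-hyperelliptic $X$ satisfying the vanishing. My approach is to bootstrap from the Shimura-curve vanishing results announced as Theorems \ref{thm:2}--\ref{thm:4} and then attempt to transport them to arbitrary simple $A$ by cycle correspondences between curves.

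I would first treat the case where $A$ is isogenous to a simple factor of the Jacobian of some Shimura curve $Y$ (for example, when $A$ is of $\GL_2$-type attached to a Hilbert modular form on a totally real field). Theorem \ref{thm:3} then gives $[\Delta]_{h^A(Y)} = 0$ conditional on the non-vanishing of a central $L$-value, a condition which can in principle be secured by twisting or level variation. Running through a tower of Shimura curves, the multiplicity of a fixed $A$ in $J_Y$ grows unboundedly by standard automorphic multiplicity formulas, and Theorem \ref{thm:2} guarantees that the vanishing persists in large genus; this simultaneously delivers (i) and (ii) for such $A$, with non-hyperellipticity automatic once $g(Y)$ is sufficiently large.

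For a generic simple $A$ that does not occur in any Shimura curve, I would exploit the classical fact that every abelian variety over $\BC$ occurs as an isogeny factor of some Jacobian $J_W$, and couple $W$ with a Shimura curve $Y$ carrying a motivic sub-object related to $A$. Concretely, the plan is to find a cycle correspondence $\Gamma \in \Ch^1(W \times Y)$ and a curve $X$ built from $W$ and $Y$ (for instance a smooth curve dominating both, or a graph construction) such that the induced triple correspondence $\Gamma^{\otimes 3}$, composed with the projector $\delta_A$ and the functoriality of \eqref{eq:mot Del}, transports a known vanishing $[\Delta]_{h^B(Y)} = 0$ on the Shimura side to $[\Delta]_{h^A(X)} = 0$. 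Iterating over Shimura curves of growing level would then produce the unbounded $g(h^A(X))$ demanded by (ii).

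The principal obstacle is precisely this transport step: nothing in the Shimura-curve argument is manifestly compatible with an arbitrary correspondence to a non-Shimura curve, and for a generic $A$ there is no Hecke- or automorphic-theoretic handle to exploit. I expect a complete proof to require either enlarging the class of vanishing theorems beyond Shimura curves — for instance by exhibiting curves inside higher-dimensional Shimura varieties whose motives realize a prescribed simple $A$ — or a genuinely new geometric construction that propagates vanishing of the modified diagonal cycle along general cycle correspondences. For this reason the statement is posed as a conjecture rather than a theorem, and in the present paper we confine ourselves to the Shimura-curve instance of (i) and (ii).
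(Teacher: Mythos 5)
This statement is posed as a conjecture in the paper and is not proven there: the text explicitly says the first part ``seems already out of reach,'' and the paper only establishes the special case of certain modular abelian varieties (Theorem \ref{thm:2}, deduced from Theorem \ref{thm:2+} and Corollary \ref{cor:2+}). So you are right not to claim a proof, and your closing assessment — that the general case would need vanishing theorems beyond Shimura curves or a new way to propagate vanishing along correspondences — is consistent with the paper's stance (which also notes, via \cite{NP}, that hyperelliptic curves cannot be used as a cheap substitute).

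However, your sketch of the tractable (modular) case does not match the mechanism the paper actually uses, and as stated it would fail. You propose to apply Theorem \ref{thm:3} and ``secure'' the nonvanishing of the central $L$-value by twisting or level variation. But for an almost definite quaternion algebra the proof of Theorem \ref{thm:3} shows that whenever the relevant isotypic diagonal component could be nonzero, the archimedean root numbers force $\ep(\pi)=-1$, so the central value vanishes identically; the $L$-value route therefore gives nothing, and no level variation fixes this, since the level does not change the quaternion algebra. The paper instead works locally: it requires $A$ to have a discrete series component at some finite place $v$ (this is what the non-integral $j$-invariant buys in Theorem \ref{thm:2}), base changes to a totally real $E$ in which $v$ splits into many places (Proposition \ref{ChoseE}, using Langlands base change and a Hecke-field argument), and then chooses a \emph{different} almost definite quaternion algebra $D/E$ whose Hasse invariants at those places are mismatched against the local triple-product root numbers (Lemma \ref{lem:nonhyp}); Prasad's dichotomy then kills the local trilinear forms, and Proposition \ref{general3} together with Corollary \ref{generalcor} kills the cycle for \emph{every} level $L$. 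Finally, non-hyperellipticity is not ``automatic once the genus is large'' — there are hyperelliptic curves in every genus — and the paper has to prove that only finitely many of the relevant Shimura curves have hyperelliptic components (Proposition \ref{prop:nonhyp}, via Ogg's method), while the unboundedness of $g(h^{A}(X))$ comes from the growth of $\dim\Pi_A^K$ with the level, as in the proof of Corollary \ref{cor:2+}. So the correct takeaway is that the special case is proved by forcing local $\epsilon$-obstructions after base change and change of quaternion algebra, not by producing nonvanishing central $L$-values.
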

       The first part of the conjecture seems already out of reach. 
By \cite{NP}, a very general abelian variety $A$ of dimension $\geq 3$ is not dominated by a hyperelliptic Jacobian. Thus for such $A$, one cannot expect to use   hyperelliptic curves in the place of $X$ to easily prove 
  the first part of the conjecture. 
 Moreover, this fact from \cite{NP} ensures that the conjectural vanishing
 $[\Delta]_{h^A(X)}=0$  cannot be related to a correspondence from a hyperelliptic curve to $X$.

The first  result of our paper proves this  conjecture for certain elliptic curves.     \begin{thm}\label{thm:2} 
   Let   $E$   be an elliptic curve  over $\BQ$ with non-integral $j$-invariant.  Then  Conjecture \ref{conj:1} holds for $E_\BC$.

        \end{thm}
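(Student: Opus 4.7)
The plan is to realize $E$ as an isogeny factor of the Jacobian of an appropriate Shimura curve over $\BQ$ and to apply the Shimura-curve cycle-vanishing results announced in the introduction (in particular Theorem \ref{thm:4}). The hypothesis that $j(E)$ is non-integral guarantees that $E$ has multiplicative reduction at some prime $p$, equivalently that the automorphic representation $\pi_E$ of $\GL_2(\BA_\BQ)$ attached to $E$ has Steinberg local component at $p$; this is exactly what makes Jacquet--Langlands-type realizations on Shimura curves available.

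First, for the realization, I would choose an indefinite quaternion algebra $B/\BQ$ whose (necessarily even) set of ramified finite primes $\Sigma$ is contained in the set of primes where $\pi_E$ is discrete series (the choice $B = M_2(\BQ)$, yielding modular curves, is always available; if $E$ has several bad primes then honest division quaternion algebras become available as well). For a sufficiently small open compact subgroup $K \subset B^\times(\BA_f)$, the Jacquet--Langlands transfer $\pi_E^B$ of $\pi_E$ to $B^\times$ satisfies $(\pi_E^B)^K \neq 0$, and then $E$ occurs as an isogeny factor of the Jacobian of the associated Shimura curve $X = X_{B,K}/\BQ$ with multiplicity $m := \dim (\pi_E^B)^K$. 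Base-changing to $\BC$ gives $h^E(X_\BC) \neq 0$ with $g(h^E(X_\BC)) = m$.

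Next, for the vanishing, I would apply Theorem \ref{thm:4} (\emph{vanishing for all but at most one class of Shimura curves}): $[\Delta]_{h^E(X)}$ vanishes for all but at most one class of admissible $(B,K)$. The flexibility in choosing $(B, K)$---either varying $B$ within the constraint on $\Sigma$, or simply varying the level $K$---makes it easy to land outside this one exceptional class. For the unboundedness of $g(h^E(X_\BC)) = m$, shrinking $K$ (or, in the modular case, taking $N$ with many prime factors so that old-form contributions inflate the multiplicity) drives $m \to \infty$; for high enough level the Shimura curve $X_{B,K}$ is non-hyperelliptic, with at most finitely many exceptions to exclude.

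The real difficulty lies entirely in Theorem \ref{thm:4}, whose proof is the substantive Shimura-curve contribution of the paper and will presumably draw on tools specific to Shimura curves: Cerednik--Drinfeld uniformization at primes of $\Sigma$, the Hecke algebra action on $\Ch^2(X^3)$, and systematic comparisons across the Jacquet--Langlands family. Given Theorem \ref{thm:4}, Theorem \ref{thm:2} reduces to the representation-theoretic selection above: non-integrality of $j(E)$ is precisely the hypothesis under which $E$ is accessible via Jacquet--Langlands to Shimura curves, delivering both the vanishing and the unboundedness required by Conjecture \ref{conj:1}.
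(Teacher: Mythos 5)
There is a genuine gap at the step ``the flexibility in choosing $(B,K)$ makes it easy to land outside this one exceptional class.'' Theorem \ref{thm:4} excludes at most one quaternion algebra \emph{over the base field} $\BQ$, and the exception is a class of algebras, not of pairs $(B,K)$: the obstruction behind it (existence of a diagonal-invariant trilinear form on $\Pi_E^{\otimes 3}$, governed by local root numbers via Prasad's dichotomy) is independent of the level, so varying $K$ cannot move you out of the exceptional class. Meanwhile, the hypothesis ``$j(E)$ non-integral'' only guarantees \emph{one} prime of multiplicative reduction. If that is the only prime at which $\pi_E$ is a discrete series, then $\RM_2(\BQ)$ is the \emph{only} indefinite quaternion algebra over $\BQ$ with $h^E(X_{B,K})\neq 0$, and nothing prevents it from being the exceptional algebra of Theorem \ref{thm:4} (this happens exactly when the global triple-product sign is $-1$, e.g.\ for prime-conductor curves of analytic rank one, where the Gross--Kudla--Schoen cycle on modular curves is expected to be nonzero). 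So for a large class of curves covered by Theorem \ref{thm:2}, your selection argument produces no admissible $(B,K)$ at all.

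The paper's proof gets around precisely this: since Conjecture \ref{conj:1} is a statement over $\BC$, one is free to replace $\BQ$ by a larger totally real field. Theorem \ref{thm:2+} chooses, via successive quadratic extensions (Lemma \ref{lem:successive quad extensions } and Proposition \ref{ChoseE}), a totally real $E$ in which the multiplicative prime splits into many places; Langlands base change, the equality of Hecke fields $H(\pi_E)=H(\pi)$, and Faltings' isogeny theorem ensure the base-changed abelian variety stays simple with the same endomorphism field. Then the Hasse principle (Lemma \ref{lem:nonhyp}) produces an almost definite quaternion algebra $D$ over $E$, ramified only at those split places, whose local invariant at a suitable place \emph{disagrees} with the local root number; Proposition \ref{general3} (Prasad again, but used locally and constructively rather than through the ``all but one'' dichotomy) then gives vanishing of the isotypic diagonal cycle outright, while $\leftidx{_D}\Pi_{E}\neq 0$ keeps $h^{E}(X_L)\neq 0$. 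Your final paragraph on unboundedness and non-hyperellipticity (shrinking the level, finitely many hyperelliptic exceptions) does match the paper's Proposition \ref{prop:nonhyp}, but the core of the argument — passage to a totally real extension and the explicit root-number/Hasse-invariant mismatch — is missing from your proposal, and without it the reduction to Theorem \ref{thm:4} does not prove Theorem \ref{thm:2}.
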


      Note that,  though   $[\Delta]_{h^{ 1}(E)}=0$ for    any elliptic curve $E$, the class   $[\Delta]_{h^E(X)}$ may be nonvanishing for some $X$. Some examples were already given by Top  \cite[Theorem 3.4.2]{Top}.

The above  theorem is proved in the end of
          \S \ref{ss:thm2}. In fact, it is deduced from more general result (Corollary \ref{cor:2+}) for modular abelian varieties. Moreover, 
  the analog of Theorem \ref{thm:2} holds for any elliptic curve $E$ over the  function field $F$ of a curve $C$ over a finite field, see Remark \ref{rmk fun fd}.

   \subsection{} 
  
     For  a triple of submotives $ M_i=(X,p_i), i=1,2,3$ of $X$,       define  $$[\Delta]_{M_1\otimes  M_2\otimes M_3}:= \lb p_1 \otimes  p_2\otimes  p_3\rb  _*[\Delta]\in \Ch^2(X^3).$$
%
%

We propose the following conjecture, which is a special case of the Beilinson--Bloch 
conjecture   \cite[Conjecture 5.0]{Bei}.
   \begin{conj}\label{conj:2} Let $F$ be a number field, and let $X$ be a curve over $F$. Let $A_1,A_2,A_3$ be  three simple isogeny factors of $\Jac_X$. Assume that the L-function $L\lb s,h^1(A_1)\otimes h^1(A_2)\otimes h^1(A_3)\rb$ has holomorphic continuation to $s\in\BC$. Then if 
   $$L\lb 2,h^1(A_1)\otimes h^1(A_2)\otimes h^1(A_3)\rb \neq 0, $$  
    we have $$[\Delta]_{h^{A_1}(X)\otimes h^{A_2}(X)\otimes h^{A_3}(X)}=0.
     $$ 
    
         \end{conj}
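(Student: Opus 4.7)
The plan is to deduce this from the Bloch--Beilinson conjecture applied to the motive $M := h^1(A_1) \otimes h^1(A_2) \otimes h^1(A_3)$, viewed as the relevant summand of $h(X^3)$ through $h^{A_1}(X) \otimes h^{A_2}(X) \otimes h^{A_3}(X)$. First I would observe that $M$ is pure of weight $3$, with Betti realization concentrated in cohomological degree $3$. Consequently the cycle class map $\Ch^2(M) \to H^4(M) = 0$ is identically zero, so $[\Delta]_{h^{A_1}(X) \otimes h^{A_2}(X) \otimes h^{A_3}(X)}$ is automatically homologically trivial, and the content of the conjecture is to upgrade this to rational triviality.

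The Bloch--Beilinson conjecture predicts that for a pure motive $N$ of weight $2i-1$ over a number field,
$$\dim_{\BQ} \Ch^i(N) \;=\; \ord_{s=i} L(s, N).$$
Taking $N = M$ and $i = 2$, the hypothesis $L(2, M) \neq 0$ forces $\Ch^2(M) = 0$, and in particular $[\Delta]_{h^{A_1}(X) \otimes h^{A_2}(X) \otimes h^{A_3}(X)} = 0$. This reduction is immediate; the entire difficulty is of course that Bloch--Beilinson is a deep open conjecture.

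A more realistic strategy for special cases would proceed in two steps. One would first factor the cycle class through the $\ell$-adic Abel--Jacobi map
$$\Ch^2(M) \otimes \BQ_\ell \;\longrightarrow\; H^1_f\bigl(F, \, H^3_{\ell}(M)(2)\bigr)$$
into a Bloch--Kato Selmer group, and then show this Selmer group vanishes as a consequence of $L(2, M) \neq 0$. In the modular case relevant to the Shimura curve results of the rest of the paper, the second step is the kind of statement accessible to Euler system methods for triple-product Galois representations, such as those of Darmon--Rotger or Bertolini--Seveso--Venerucci. The main obstacle is the first step: injectivity of the Abel--Jacobi map on $\Ch^2_{\hom}$ of a threefold is itself a core instance of Bloch--Beilinson and is not known in any generality, which is why even in the modular setting an unconditional proof seems well out of reach with current techniques.
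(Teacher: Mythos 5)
You were asked to prove what is, in the paper, stated only as Conjecture \ref{conj:2}; the paper offers no proof of it and motivates it exactly as you do, as a special case of the Beilinson--Bloch conjecture \cite{Bei}. Your reduction is correct as far as it goes: the projected class lies in the $h^1\otimes h^1\otimes h^1$ K\"unneth component, which is pure of weight $3$, so it is automatically homologically trivial, and nonvanishing of $L(s,h^1(A_1)\otimes h^1(A_2)\otimes h^1(A_3))$ at the central point $s=2$ would, under Beilinson--Bloch, force the homologically trivial part of $\Ch^2$ of that motive to vanish. So for the general statement your attempt coincides with the paper's own framing and, like the paper, stops short of a proof. Where you genuinely diverge from the paper is in the case it does prove unconditionally: Theorem \ref{thm:3} establishes the conjecture for quaternionic Shimura curves not by the route you sketch ($\ell$-adic Abel--Jacobi into a Bloch--Kato Selmer group plus Euler systems, which, as you admit, is blocked by the unknown injectivity of the Abel--Jacobi map), but by arguing directly in the Chow group. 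The paper decomposes Hecke correspondences into automorphic isotypic projectors, shows (Corollary \ref{generalcor}) that nonvanishing of the $\Pi$-isotypic component of the diagonal produces a nonzero diagonal-invariant trilinear form on $\Pi$, and then combines Prasad's dichotomy and local root numbers (Theorem \ref{thm:Pra}, Proposition \ref{general3}) with the archimedean signs and the Hasse principle to force the global root number to be $-1$; hence $L(1/2,\pi)=0$ and the central value $L(2,h^1(A_1)\otimes h^1(A_2)\otimes h^1(A_3))$ vanishes, contradicting the hypothesis. This contrapositive, representation-theoretic argument requires no injectivity of Abel--Jacobi and no Selmer-group or Euler-system input, which is precisely what makes the Shimura-curve case accessible; its cost is that it works only in that automorphic setting, whereas your proposed reduction, if the first step were ever available, would apply to arbitrary curves over number fields.
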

     
      In particular, for a  curve $X$ over $F$ such that        such that $L\lb s,h^1(X)^{\otimes 3}\rb $ has holomorphic continuation to $s\in\BC$ and 
  $L\lb 2,h^1(X)^{\otimes 3}\rb \neq 0,$
  we have $[\Delta_e]=0$.

For a quaternionic Shimura curve, the holomorphic continuation assumption in    Conjecture \ref{conj:2} holds (Corollary \ref{holom}).
     \begin{thm}\label{thm:3} 
   Conjecture \ref{conj:2} holds if $X$ is a quaternionic Shimura curve over a totally real numbr field.
        \end{thm}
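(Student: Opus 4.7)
The strategy is to leverage the Hecke decomposition of $\Ch^2(X^3)$ combined with local multiplicity-one theorems for trilinear invariants, reducing the statement to an instance of the Beilinson--Bloch prediction that I hope can be made effective using specific features of Shimura curves.

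First, I would use the Jacquet--Langlands correspondence to identify each simple isogeny factor $A_i$ of $\Jac_X$ with a cuspidal automorphic representation $\pi_i$ of $B^\times_{\BA_F}$, where $B$ is the indefinite quaternion algebra over the totally real field $F$ associated with $X$. Under this correspondence, $h^{A_i}(X)$ is cut out by Hecke operators projecting onto the $\pi_i$-isotypic summand of $h^1(X)$, and the cycle $[\Delta]_{h^{A_1}(X) \otimes h^{A_2}(X) \otimes h^{A_3}(X)}$ lies in the $(\pi_1,\pi_2,\pi_3)$-Hecke isotypic component of $\Ch^2(X^3)$. The holomorphic continuation from Corollary \ref{holom} ensures that the central value $L(2, M) = L(1/2, \pi_1 \times \pi_2 \times \pi_3)$ is well-defined.

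The principal tools are Prasad's multiplicity-one theorem for trilinear forms, which bounds $\Hom_{B^\times_{F_v}}(\pi_{1,v} \otimes \pi_{2,v} \otimes \pi_{3,v}, \BC)$ by one at each place with dimension determined by local $\varepsilon$-factors, and Ichino's formula, which expresses the square of the global trilinear period as the product of $L(1/2, \pi_1 \times \pi_2 \times \pi_3)$ with local integrals. These produce a dichotomy. If at some place the local Hom vanishes, then the $(\pi_1,\pi_2,\pi_3)$-isotypic component of $\Ch^2(X^3)$ vanishes for purely representation-theoretic reasons (its cohomological realization factors through the trilinear Hom), forcing $[\Delta]_{(\pi_1,\pi_2,\pi_3)} = 0$. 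Otherwise, all local Hom spaces are one-dimensional, the global Hom space is one-dimensional, and the hypothesis $L(2,M) \neq 0$ combined with Ichino implies the global trilinear period is non-degenerate.

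The crux is then to conclude that $[\Delta]$ still projects trivially to this isotypic component in the second case. The natural route is to bound the Abel--Jacobi image of $[\Delta]_{(\pi_1,\pi_2,\pi_3)}$ via a Bloch--Kato argument: non-vanishing of $L(1/2, \pi_1 \times \pi_2 \times \pi_3)$ is predicted to imply vanishing of the Bloch--Kato Selmer group $H^1_f(G_F, \rho_{\pi_1} \otimes \rho_{\pi_2} \otimes \rho_{\pi_3}(2))$, into which the Abel--Jacobi image of the cycle naturally embeds. Combined with an injectivity statement for Abel--Jacobi on this isotypic component---accessible for Shimura curves through transfer to the definite quaternion side or through an Euler system construction leveraging CM points---this would yield the desired vanishing in $\Ch^2(X^3)$.

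The main obstacle is precisely this second case: translating the analytic input (non-vanishing of the central L-value) into vanishing of the Chow-theoretic invariant. This amounts to verifying an instance of Beilinson--Bloch for triple products, which requires either establishing a Bloch--Kato-type bound on the Selmer group of the triple product Galois representation, or exhibiting the diagonal cycle as an explicit boundary in the relevant Chow group. The rich structure available for Shimura curves---modularity, well-understood Galois representations attached via Jacquet--Langlands, CM points, and the full Hecke algebra action---is what makes this case tractable where the conjecture is wildly open in general.
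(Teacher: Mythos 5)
There is a genuine gap, and it sits exactly where you place the ``crux.'' Your second case --- all local trilinear Hom spaces nonzero, central $L$-value nonzero, and you then try to force vanishing of the Chow class via Ichino's formula, a Bloch--Kato bound on the Selmer group of the triple product Galois representation, and injectivity of Abel--Jacobi on the isotypic component --- is not a proof: Abel--Jacobi injectivity on $\Ch^2(X^3)$ is itself a wide-open conjecture (the paper only uses it as motivation, cf.\ Remark \ref{BBrmk}), and a Bloch--Kato/Euler-system bound would at best control the image of the cycle in the Selmer group, not its class in the Chow group. So as written the argument reduces the theorem to statements that are at least as hard as the conjecture being proved.

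The idea you are missing is that the second case never occurs, by a global root number computation. The quaternion algebra $B$ underlying a Shimura curve is split at exactly one archimedean place and division at the others, and the archimedean local root numbers of a triple of weight-two holomorphic cuspidal representations are all $-1$. By Prasad's dichotomy, existence of a nonzero $B_v^\times$-invariant trilinear form at a finite place $v$ forces $\ep(\pi_v)=\ep(B_v)$; if this holds at every finite place, then the Hasse principle $\prod_v\ep(B_v)=1$ together with the archimedean signs gives $\ep(\pi)=-1$, hence $L(1/2,\pi)=0$, contradicting the hypothesis $L\lb 2,h^1(A_1)\otimes h^1(A_2)\otimes h^1(A_3)\rb\neq 0$ (the holomorphy statement, Corollary \ref{holom}, ensures the other factors are finite). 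Thus the hypothesis already guarantees a finite place where either the diagonal central character condition fails or the local Hom vanishes, and one concludes by the purely representation-theoretic vanishing criterion --- which, by the way, is not a formal consequence of ``the cohomological realization factors through the trilinear Hom'' as you assert, but is the genuine Chow-group statement of Theorem \ref{general}/Corollary \ref{generalcor}, proved via the projectors $\pr_{f}$ mapping $\Ch^2(X_K^3)$ to $1$-cycles on $A_1\times A_2\times A_3$ (together with Lemma \ref{lem:AvsPi} and Proposition \ref{general3}). With that observation no period formula, Selmer group bound, or Abel--Jacobi injectivity is needed anywhere.
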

                 
In fact  we give a representation theoretical condition  (Corollary \ref {generalcor},
 a special case of  Theorem \ref{general}) for the  vanishing of the  diagonal cycles on an arbitrary power of  a Shimura curve, in terms of  the  vanishing of 
  diagonally invariant multi-linear forms on automorphic representations.
   When  specialized to the triple power,   Prasad's dichotomy    \cite[Theorem 1.2]{Pra} (see also Theorem  \ref{thm:Pra} (1)). 

    \begin{thm}\label{thm:4} 
 For  elliptic curves $E_i,i=1,2,3$   over a  totally real number field $F$,  for all but at most one  quaternion algebras over $F$,  $[\Delta]_{{h^{E_1}(X)\otimes h^{E_2}(X)\otimes h^{E_3}(X)}}= 0$ for  all of their associated 
  Shimura curves $X$.

           \end{thm}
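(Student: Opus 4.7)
The plan is to apply the representation-theoretic vanishing criterion of Corollary \ref{generalcor} together with Prasad's local dichotomy (part of Theorem \ref{thm:Pra}). Assuming modularity, each $E_i$ corresponds to a cuspidal automorphic representation $\pi_i$ of $\GL_2(\BA_F)$ with trivial central character and parallel weight $2$. For a quaternion algebra $B/F$ and its associated Shimura curve $X_B$, if some $\pi_i$ has no Jacquet--Langlands transfer to $B^\times$, then by the definition of $\delta_{E_i}$, the projector $\delta_{E_i}$ vanishes on $X_B$, so $h^{E_i}(X_B)=0$ and the diagonal cycle is zero trivially. Otherwise, write $\pi_i^B$ for the transfer. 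Then by Corollary \ref{generalcor}, the vanishing of $[\Delta]_{h^{E_1}(X_B)\otimes h^{E_2}(X_B)\otimes h^{E_3}(X_B)}$ is implied by the vanishing of the space of $(B^\times(\BA_F))^\Delta$-invariant trilinear forms on $\pi_1^B\otimes \pi_2^B\otimes \pi_3^B$.

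Next, one applies Prasad's dichotomy place by place: at each place $v$ of $F$, the local Hom space
$$\Hom_{(B_v^\times)^\Delta}\lb \pi_{1,v}^B\otimes \pi_{2,v}^B\otimes \pi_{3,v}^B,\BC\rb$$
has dimension at most one, and is nonzero for exactly one of the two isomorphism classes of quaternion algebras over $F_v$, as dictated by the local root number $\epsilon_v(1/2,\pi_{1,v}\times \pi_{2,v}\times \pi_{3,v})$. This uniquely pins down the local invariant of $B$ at every place of $F$. By the Hasse principle for quaternion algebras, there exists at most one isomorphism class of quaternion algebras over $F$ realizing this family of prescribed local invariants (and such a $B$ exists if and only if the product of the prescribed local signs equals $+1$).

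Consequently, for every other quaternion algebra $B'/F$ there is at least one place $v$ at which the local trilinear Hom space vanishes, so any global trilinear form must vanish; by Corollary \ref{generalcor}, we conclude that $[\Delta]_{h^{E_1}(X_{B'})\otimes h^{E_2}(X_{B'})\otimes h^{E_3}(X_{B'})}=0$ for every Shimura curve $X_{B'}$ attached to $B'$. Note that if the global product of Prasad signs is $-1$, then no coherent $B$ exists and the cycle vanishes for \emph{every} quaternion algebra over $F$, which is stronger than but compatible with the assertion.

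The main delicate point is the archimedean compatibility: over a totally real $F$ of degree $n\geq 2$, a Shimura curve requires $B$ to be split at exactly one real place and ramified at the remaining $n-1$, whereas Prasad's dichotomy at each archimedean place prescribes $B_v$ independently from the local structure of the weight-$2$ discrete series components. One must verify that the local prescription of Prasad at archimedean places, combined with those at finite places, singles out at most one quaternion algebra among those giving rise to Shimura curves; the Hasse principle together with the uniqueness of the local prescription handles this automatically, yielding the claimed "at most one" bound.
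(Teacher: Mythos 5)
Your overall strategy is the one the paper uses: reduce the vanishing of the isotypic diagonal cycle to the non-existence of diagonal-invariant trilinear forms via Corollary \ref{generalcor}, then apply Prasad's dichotomy so that the quaternion algebras capable of supporting such forms are pinned down locally, and finish with the Hasse principle (this last ingredient appears in the paper through Proposition \ref{general3}). Your preliminary reduction, treating separately the case where some $\Pi_{E_i}$ does not transfer to $B^\times$ (so that $\delta_{E_i}=0$ and the cycle vanishes trivially), and your analysis at the finite places, coincide with the paper's proof.

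The genuine flaw is in your archimedean paragraph. Corollary \ref{generalcor} is a statement about diagonal $B^\times(\BA_\rf)$-invariant forms on $\Pi=\otimes_{i=1}^3\Pi_{E_i,\iota_i}$, which are representations of the \emph{finite} adelic group only; no archimedean components enter the criterion, which is why the paper states Theorem \ref{thm:Pra} for non-archimedean $k$ and applies the dichotomy only at finite places. Consequently, vanishing of an archimedean local Hom space gives you nothing through Corollary \ref{generalcor}, and your final inference (``for every other $B'$ there is a place with vanishing local Hom, hence the cycle vanishes'') is not licensed when that place is archimedean. Worse, the archimedean prescription you invoke points the wrong way: at every real place the triple of weight-two discrete series has $\ep=-1$ (as used in the proof of Theorem \ref{thm:3}), so the locally distinguished algebra there is the division algebra; the ``unique algebra prescribed at all places'' is thus ramified at every archimedean place, is never almost definite, and has no associated Shimura curve, so the Hasse principle does not ``automatically'' single out one algebra among those giving Shimura curves. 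What your argument (like the paper's) actually establishes is that any almost definite $B$ whose Shimura curves carry a nonvanishing isotypic diagonal cycle must have the Prasad-distinguished local algebra at every \emph{finite} place, so its finite ramification is uniquely determined; the uniqueness asserted in the theorem is drawn from this finite-place dichotomy, and your detour through archimedean trilinear forms neither contributes to nor correctly supplies the archimedean information, so it should be removed rather than relied upon.
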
  
           Both the above two theorems are proved in \S           \ref{Triple product diagonal cycle}.
     In short,    we use the fact that the representation theoretical  condition  is further related to local root numbers of the $L$-function (see Proposition \ref{general3}).
   
          \subsection{} A key observation for the above results is the automorphy of Chow groups observed in \ref         {Product of Shimura curves}.
          Let $V$ be the product of $n$ (not necessarily isomorphic)  quaternionic Shimura curves over a totally real field. Then
 $ \Ch^\ast(V )$,  as a  module over the Hecke algebra,  is a direct sum of the ones appearing in the cohomology of $V$.
 This is also predicted by
          the conjectural injectivity of the Abel--Jacobi map  \cite{Bei}.
                 
\subsection*{Acknowledgment}
 C. Q.  is partially supported by the NSF grant DMS \#2000533.
 W. Z. is partially supported by the NSF grant DMS \#1901642 and the Simons foundation.

 \subsection*{Notations and conventions}  
 

 By a cycle on a variety $X$ we mean an element in the Chow group $\Ch^\ast(X)$.
 
For a morphism $f:V\to U$, let $\Gamma_f$ be the graph  of $f$, as an algebraic cycle on $V\times U$.
Denote the identity morphism on $V$ by $\id_V$. Then $\Gamma_{\id_V}$ is the  diagonal algebraic cycle of $V^2$.

For varieties $V,U$ over $F$,  cycles  $z_1$ on $ V $ and $z_2$ on $ U$, $z_1\times z_2$ is the product cycle on $V\times U$.
This defines the natural map $\Ch^*(V)\otimes\Ch^*(U)\to \Ch^*(V\times U)$.
Then $z_1\otimes z_2$ is understood as  $z_1\times z_2$.

 For varieties $V,U,W$ over $F$,    cycles $z_1$ on $ V\times U$ (understood as a correspondence from $V$ to $U$)
  and $z_2$ on $ U\times W $, let
$z_2\circ z_1$ be the correspondence composition, which is a cycle on $V\times W$. 

 
 For a   cycle $z$ on the square $V^2$ of a variety $V$, let $z^t$  be  the transpose of $z$.

\section{Motivic decomposition  of a curve}\label{An alternative approach}

In this section, we decompose the ``identity"  divisorial correspondences on a curve using simple isogeny factors of its Jacobian.
With this decomposition,  we derive vanishing  criteria for     cycles  on product of curves.
\subsection{Divisorial correspondences}\label{Divisorial correspondences}

  Let $X,X'$ be   smooth   projective  varieties over an arbitrary   field $F$.    
 Fix $e \in \Ch_0(X) ,e'\in \Ch_0(X)$    of degree 1. 
 \begin{defn}
  Let $\DC(X\times X')$ be the subspace of  $\Ch^{1}(X\times X')$ of $z$'s such that $z_* e\in \BQ e'$ and $z^*e\in \BQ e'$.  
  Let $\DC^1(X\times X')$ be the subspace of  $\Ch^{1}(X \times X' )$ of $z$'s such that $z_* e=0$ and $z^*e'=0$.  
  \end{defn}
   We call  cycles  in $\DC^1(X\times X')$  divisorial correspondences with respect to  $e,e'$.

  Assume that $X$ is a curve. 
   By  \cite[Theorem 3.9]{Sch},  the following natural map  induced by pushforward  of divisors
 induces an isomorphism  of $\BQ$-vector spaces
\begin{equation}\label{Sch3.9}
 \DC^1(X\times X')\cong \Hom(J_X,\Pic^0_{X'/F})_\BQ.
 \end{equation}

  Consider the case $X'=X,e'=e$.
We may view $\Ch^1(X^2)$ as the (non-commutative) ring of self-correspondence on the curve $X$. Then $\DC(X^2)$ is a subring.  
The identify is  the diagonal  cycle  $[\Gamma_{\id_X}]$.  
Under correspondence composition, $\DC^1(X^2)$ is also a ring. 
The identify  is    the 1st Chow--K\"unneth projector
\begin{equation}
  \label{delta}
\delta_{X}:=[\Gamma_{\id_X}]-(e\times [X] +[X]\times e).
 \end{equation} 
This fact is a special case of the following   more general lemma.
   For $z\in \DC(X^2)$, we assign 
$$
 \lb    z^*e \times [X],  z-z^*e  \times [X]-[X]\times  z_* e,
[X]\times   z_* e\rb
$$ 
to get the following direct sum decomposition (the sum is clearly direct):     \begin{equation}\label{refm2}
    \DC(X^2)=\BQ(e\times [X]) \oplus  \DC^1(X^2)\oplus  \BQ([X]\times e).
    \end{equation}
       It is straightforward to check the following lemma.
 \begin{lem} \label{zdelta}The left/right composition by $\delta_X$ on  $\DC(X^2)$  is the projection to $\DC^1(X^2)$ via \eqref{refm2}.
 \end{lem}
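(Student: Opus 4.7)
The plan is to verify the equality $\delta_X \circ z = z - z^*e \times [X] - [X] \times z_*e$, which by \eqref{refm2} is exactly the image of $z$ under the projection to $\DC^1(X^2)$. Since $\delta_X = [\Gamma_{\id_X}] - e \times [X] - [X] \times e$ and $[\Gamma_{\id_X}]$ is the identity for correspondence composition, the task reduces to computing the two compositions $(e \times [X]) \circ z$ and $([X] \times e) \circ z$ and identifying them with $z^*e \times [X]$ and $[X] \times z_*e$, respectively.

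Both compositions are handled by the standard formula $z_2 \circ z_1 = \pi_{13,*}(\pi_{12}^* z_1 \cdot \pi_{23}^* z_2)$ on $X^3$. For $(e \times [X]) \circ z$, the pullback $\pi_{23}^*(e \times [X]) = [X] \times e \times [X]$ imposes the condition that the second coordinate lie in $e$, so its intersection with $\pi_{12}^* z = z \times [X]$ is the restriction of $z$ to the fiber over $e$; pushing forward by $\pi_{13}$ collapses the middle copy using $\deg e = 1$ and yields $z^*e \times [X]$. The analogous computation for $([X] \times e) \circ z$ produces $(\pi_{1,*} z) \times e$, and by the projection formula $\pi_{1,*} z = \deg(z_*e) \cdot [X]$; the hypothesis $z \in \DC(X^2)$ upgrades this identity to $[X] \times z_*e$ because $z_*e$ is a scalar multiple of $e$ to begin with.

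Summing the three contributions gives the claimed formula for $\delta_X \circ z$. The statement for right composition $z \circ \delta_X$ follows by the symmetric argument, swapping the roles of $\pi_{12}$ and $\pi_{23}$ and of the two factors of $X^2$; equivalently, one can apply the transpose operation $(-)^t$ to reduce to the case already treated. No serious obstacle is anticipated: the lemma is pure bookkeeping about how the rank-one cycles $e \times [X]$ and $[X] \times e$ act on divisorial correspondences, and the sole place the $\DC$ hypothesis enters is in promoting the scalars $\deg(z_*e)$ and $\deg(z^*e)$ to the actual classes $z_*e$ and $z^*e$, so that the outputs of the two cut-off compositions land in the rank-one summands $\BQ([X] \times e)$ and $\BQ(e \times [X])$ of \eqref{refm2}.
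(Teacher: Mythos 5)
Your proposal is correct and is exactly the direct verification the paper omits (its ``proof'' is just the remark that the lemma is straightforward to check): the identities $(e\times[X])\circ z=z^{*}e\times[X]$ and $([X]\times e)\circ z=\deg(z_{*}e)\,[X]\times e$, combined with $z_{*}e\in\BQ e$ (and $\deg e=1$) to rewrite the latter as $[X]\times z_{*}e$, produce precisely the middle component of \eqref{refm2}, and the right-composition case follows by transposition since $\delta_X^t=\delta_X$. One cosmetic point: in the first composition nothing needs $\deg e=1$ or the $\DC$ hypothesis (the pushforward of $z\cdot([X]\times e)$ to the first factor is $z^{*}e$ by definition); those hypotheses are only used, as you say at the end, to promote the scalar $\deg(z_{*}e)$ (resp.\ $\deg(z^{*}e)$ for right composition) to the class itself.
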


 
 \subsection{Endomorphisms of   abelian varieties}\label{Decomposition of the identity}
  Let $J$ be   an abelian variety over an arbitrary field $F$.
Then in the isogeny category of abelian varieties, we have 
\begin{equation}
\label{JJpi}
J\cong \prod_{A} A^{d_A} \end{equation}
where $A$ runs over a set of representatives of  simple isogeny factors of $J$. 
 Let 
    $M_A=\End(A)_\BQ$ which is a finite-dimensional  division algebra over $\BQ$,
  $\pi_A=\Hom(J,A)_\BQ$ and $\wt \pi_A=\Hom(A,J)_\BQ$  so that   
 $\pi_A$ (resp. $\wt \pi_A$) is a   left (resp. right) $M_A$-module of rank $d_A$  where the action is by post-composition (resp. pre-composition).   Define a pairing 
  \begin{equation}\label{duality}
  \pi_A
 \times\wt \pi_A\to M_A,\quad  (\phi,\psi)\mapsto\phi\circ \psi 
  \end{equation}
  It is direct to check that this   pairing  is linear  for  the left (resp. right) $M_A$-action on  
  $  \pi_A$ (resp. $\wt \pi_A$) and left (resp. right) $M_A$-multiplication on
  $M_A$,  and it is perfect. Thus $\wt \pi_A$ is the dual $M_A$-module of $\pi_A$.

By   \eqref{JJpi}, the  
\begin{equation}\label{eq:EndJ}
\End(J)_\BQ\cong   \bigoplus_{A} M_A\otimes_\BQ \RM_{d_A}(\BQ),
  \end{equation} 
   where the latter is the matrix algebra. In particular,  it is semisimple and $\pi_A$'s consist of all simple right-modules of $\End(J)_\BQ$ (and the right action is by pre-composition).
  We have the canonical isomorphism
  \begin{equation}\label{pipiJ}
  \bigoplus_{A}  \pi_A\otimes_{M_A} \wt \pi_A\cong \End(J)_\BQ, \
  \phi\otimes \psi\mapsto  \psi\circ \phi
  \end{equation} 
  Let $\cB_A$ be an $M_A$-basis of $\pi_A$, and for $\phi\in \cB_A$, let $\wt \phi$ be the dual element in the dual basis.
Then  
 the $\pi_A$-isotypic  projector  is    \begin{equation}\label{decomproj}
 \sum_{\phi\in \cB_A} \wt\phi \circ \phi \in \End(J)_\BQ.
\end{equation}
 Equivalently,  under \eqref{JJpi}, \eqref{decomproj}  is identity on the $A$-component, and 0 on other components.

 \subsection{Projectors for a curve I}
Let $J=J_X$ be the Jacobian variety of the curve $X$.  
For a  simple isogeny factor $A$,  let $ \delta_{A}=\delta_{X,A} \in\DC^1(X^2)$ be the  $\pi_A$-isotypic  projector under  \eqref{Sch3.9} and \eqref{eq:EndJ}.
 
 We first consider base change.
 Let $E/F$ be a field extension and $X^o$ a connected component of $X_E$.
 We have a commutative diagram 
  \begin{equation*} 
\xymatrix{ \DC^1(X^2)  \ar[r]\ar[d] &\End(J)_\BQ\ar[d] \\
 \DC^1(X^{o,2})\ar[r]
&\End(J^o)_\BQ},
 \end{equation*}
   where the horizontal maps are isomorphisms \eqref{Sch3.9} and vertical maps are restrictions.
 Let    $A^o$ be a simple isogeny factor of $A_E$ appearing in $J^o=J_{X^o}$. Then from the diagram, it is easy to see  that
   \begin{equation}\label{eq:bc}
  \delta_{A^o} \circ \lb \delta_{A}|_{X^{o,2}}\rb=  \delta_{A^o} .
 \end{equation}
 
 Now  we prove a proposition, which refines \cite[Proposition 2.3.3]{QZ1}.
 And it will be used in  \S \ref{curves of genus 3}.
 Let     $X'$ also  be a  curve  and $ e'\in \Ch_0(X)$    of degree 1. 
  Define  $\delta_{X'}$ as in \eqref{delta} accordingly.
 Let $A_i$, $i=1,...,m$, be all the  simple   isogeny  factors of  $J'=J_{X'}$. 
To  simplify the notations,   let
 $\delta_i=\delta_{{X,A_i}}$ and $\delta'_i= \delta_{X',A_i}$.
 
 \begin{lem}\label{mterms}For $z\in \Ch^1(X\times X')$, we have $$
\delta'_{i}\circ z\circ\delta_X  = \delta'_{i}\circ z\circ  \delta_{i}
=\delta_{X'}\circ z\circ  \delta_{i}.$$

 \end{lem}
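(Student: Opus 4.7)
The plan is to reduce the three-term equality to a purely functorial statement about morphisms of Jacobians, where it becomes a quick application of Schur's lemma. To begin, I decompose any $z \in \Ch^1(X \times X')$ as $z = z^*e' \times [X'] + z_0 + [X] \times z_*e$ with $z_0 \in \DC^1(X \times X')$; this is possible because $X$ and $X'$ are curves (so $\Ch^2(X) = \Ch^2(X') = 0$), and one checks by the projection formula that the middle term satisfies the two vanishing conditions defining $\DC^1$.

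A direct application of the correspondence composition formula then yields the identities
\[
\eta' \circ (\alpha \times [X']) = \alpha \times \eta'_*[X'], \qquad ([X] \times \beta) \circ \eta = \eta^*[X] \times \beta,
\]
valid for any $\alpha \in \Ch^1(X)$, $\beta \in \Ch^1(X')$, $\eta \in \Ch^1(X^2)$, $\eta' \in \Ch^1(X'^2)$. For $\eta' \in \DC^1(X'^2)$ one has $\eta'_*[X'] = 0$ (both bidegrees of a $\DC^1$-correspondence vanish, as $\deg(\eta'_* e') = 0$), and similarly $\eta^*[X] = 0$ for $\eta \in \DC^1(X^2)$. Hence $\delta'_i$ and $\delta_{X'}$ annihilate $z^*e' \times [X']$ upon left composition, while $\delta_X$ and $\delta_i$ annihilate $[X] \times z_*e$ upon right composition. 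In each of $\delta'_i \circ z \circ \delta_X$, $\delta'_i \circ z \circ \delta_i$, $\delta_{X'} \circ z \circ \delta_i$, the two non-$\DC^1$ summands of $z$ thus contribute zero, so one may replace $z$ by $z_0$ throughout.

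Finally, via the isomorphism \eqref{Sch3.9}, $z_0$ corresponds to a morphism $\phi \in \Hom(J_X, J_{X'})_\BQ$, and correspondence composition matches morphism composition. The identities $\delta_X, \delta_{X'}$ of $\DC^1(X^2), \DC^1(X'^2)$ correspond to $\id_{J_X}, \id_{J_{X'}}$, while $\delta_i, \delta'_i$ correspond to the $A_i$-isotypic projectors $\pi_X, \pi_{X'}$ of $J_X, J_{X'}$ (both zero when $A_i$ is not a factor, in which case the statement is trivial). The three expressions then translate to $\pi_{X'} \circ \phi$, $\pi_{X'} \circ \phi \circ \pi_X$, and $\phi \circ \pi_X$, and they all coincide by Schur's lemma: the two pairwise differences are $\pi_{X'} \circ \phi \circ (1 - \pi_X)$ and $(1 - \pi_{X'}) \circ \phi \circ \pi_X$, each factoring through a $\Hom$-space between the $A_i$-isotypic part of one Jacobian and a direct sum of simple factors not isogenous to $A_i$ of the other, which vanishes. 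The main technical obstacle is just the projection-formula bookkeeping in the second paragraph; the remaining identification is formal.
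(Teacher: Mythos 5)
Your proof is correct and follows essentially the same route as the paper: reduce to divisorial correspondences and verify the identity in $\Hom(J_X,J_{X'})_\BQ$ via \eqref{Sch3.9}, where it becomes an isotypic-projector (Schur's lemma) computation along $J\to J\to J'\to J'$. The paper simply checks that all three compositions lie in $\DC^1$ rather than first splitting off the degenerate summands of $z$, but that is only a difference in bookkeeping.
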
 
  \begin{proof}  It is direct to check that both sides of the equation 
 are in $\DC^1(X^2)$.
 By 
 \eqref{Sch3.9}, 
one can check the  equation via pushforwards along the  diagram 
$J\to J\to J'\to J'.$
  \end{proof}
\begin{prop}\label{vancor1}  
Assume that there is a morphism $f:X\to X'$ such that $e'=f_* e$. 
 Let $\Delta$ (resp. $\Delta'$) be the diagonal embedding of $X$ in $ X^n$ (resp. $X'$ in $ X'^n$).

(1) For   an $n$-tuple $(i_1,...,i_n) $ in $\{1,...,m\}$,   if 
  $\lb \delta_{ {i_1}}\otimes...\otimes \delta_ {i_n}\rb_* [\Delta]=0$, then we have $\lb \delta'_{ {i_1}}\otimes...\otimes \delta'_ {i_n}\rb_* [\Delta']=0$.
In particular,   if 
  $\lb \delta_{ {i_1}}\otimes...\otimes \delta_ {i_n}\rb_* [\Delta]=0$ for   every $n$-tuple $(i_1,...,i_n) $ in $\{1,...,m\}$, then
$\lb \delta_{X'}^{\otimes n}\rb_*[\Delta']=0$.

(2) Conversely, assume $\Hom (A_i,J/J')=0.$ 
   If 
 $\lb \delta'_{ {i_1}}\otimes...\otimes \delta'_ {i_n}\rb_* [\Delta']=0$ for   an $n$-tuple $(i_1,...,i_n) $ in $\{1,...,m\}$, then   $\lb \delta_{ {i_1}}\otimes...\otimes \delta_ {i_n}\rb_* [\Delta]=0$  .

\end{prop}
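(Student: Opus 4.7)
My plan is to use the graph correspondence $\Gamma_f \in \Ch^1(X \times X')$ together with the isomorphism \eqref{Sch3.9}, under which $\Gamma_f$ corresponds to $f_*\colon J_X \to J_{X'}$ and $\Gamma_f^t$ to $f^*\colon J_{X'} \to J_X$. The key geometric input is $(f^{\times n})_*[\Delta] = d\,[\Delta']$ with $d = \deg f$, which holds because $f^{\times n}$ restricts to $f$ on the diagonals. Viewing $(f^{\times n})_*$ as the action of the correspondence $\Gamma_f^{\otimes n}$, both implications then reduce to correspondence computations that can be handled using Lemma \ref{mterms}.

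For part (1), this gives
\[
d\cdot(\delta'_{i_1}\otimes\cdots\otimes\delta'_{i_n})_*[\Delta'] = \bigl((\delta'_{i_1}\circ\Gamma_f)\otimes\cdots\otimes(\delta'_{i_n}\circ\Gamma_f)\bigr)_*[\Delta].
\]
Each $\delta'_i \circ \Gamma_f$ lies in $\DC^1(X\times X')$, since $\Gamma_{f,*}e = e'$ and $\delta'_i \in \DC^1(X'^2)$. Because $\delta_X$ corresponds to $\id_{J_X}$ under \eqref{Sch3.9}, one has $w\circ\delta_X = w$ for every $w \in \DC^1(X\times X')$. Combining this with Lemma \ref{mterms} applied to $z = \Gamma_f$ yields $\delta'_i\circ\Gamma_f = \delta'_i\circ\Gamma_f\circ\delta_i$. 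Substituting this on the right and factoring out $(\delta_{i_1}\otimes\cdots\otimes\delta_{i_n})_*$ reduces the desired vanishing to the hypothesis.

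For part (2), I would first establish the key identity $\delta_i = \tfrac{1}{d}\,\Gamma_f^t\circ\delta'_i\circ\Gamma_f$ in $\DC^1(X^2)$. Via \eqref{Sch3.9}, this amounts to showing $\delta_{A_i}^{J} = \tfrac{1}{d}\,f^*\circ\delta'_i\circ f_*$ in $\End(J)_\BQ$. To verify it, use the isogeny decomposition $J \sim f^*(J')\oplus\ker f_*$ supplied by Poincaré complete reducibility, together with $f_*\circ f^* = [d]$ on $J'$: the operator $\tfrac{1}{d}f^*\circ\delta'_i\circ f_*$ acts as the $A_i$-isotypic projector on the first summand and as zero on the second, and the hypothesis $\Hom(A_i,J/J')=0$ guarantees that no $A_i$-isotypic part lives in the second summand, so the two operators agree. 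Once this identity is in place, applying $(\delta_{i_1}\otimes\cdots\otimes\delta_{i_n})_*$ to $[\Delta]$ and using $(\Gamma_f^{\otimes n})_*[\Delta] = d\,[\Delta']$ expresses the result as a scalar multiple of $(\Gamma_f^{t,\otimes n})_*(\delta'_{i_1}\otimes\cdots\otimes\delta'_{i_n})_*[\Delta']$, which vanishes by assumption. The main subtle step is this identity for $\delta_i$ in terms of $\delta'_i$; once it is proved, both halves of the proposition are routine manipulations with $\Gamma_f$ and Lemma \ref{mterms}.
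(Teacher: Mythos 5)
Your argument is correct and follows essentially the same route as the paper: transport statements to $\End(J)_\BQ$ via \eqref{Sch3.9}, use $(f^{\times n})_*[\Delta]=\deg(f)\,[\Delta']$ together with Lemma \ref{mterms} for part (1), and for part (2) verify a Jacobian-level identity using the hypothesis $\Hom(A_i,J/J')=0$ and semisimplicity. The only difference is cosmetic: where the paper invokes an abstract one-sided inverse $z_i$ of $\gamma_{f,i}=\delta'_i\circ\Gamma_f\circ\delta_i$ checked along $J\to J'\to J$, you exhibit it explicitly through the identity $\delta_i=\tfrac{1}{\deg f}\,\Gamma_f^t\circ\delta'_i\circ\Gamma_f$, and you keep track of the factor $\deg f$ that the paper suppresses (harmless, since only vanishing is at stake).
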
 
  \begin{proof}  It is direct to check that   $e'=f_* e$ implies
  $$ \Gamma_f\circ \delta_X= \delta_{ X'} \circ \Gamma_f.$$ Then 
  $(\Gamma_f\circ\delta_X)^{\otimes n}_*[\Delta]=( \delta_{X'})^{\otimes n}_*[\Delta']$. 
  As $ \delta_{X'}$ is the multiplicative identity of $\DC^1(X'^2)$, it follows
   $$\lb \delta'_{ {i_1}}\otimes...\otimes \delta'_ {i_n}\rb_* [\Delta']={\otimes }_{j=1}^n(\delta'_{ {i_j}} \circ\Gamma_f\circ\delta_X)_*[\Delta].$$ Now apply Lemma \ref{mterms}.  
Then    we have  $$\lb \delta'_{ {i_1}}\otimes...\otimes \delta'_ {i_n}\rb_* [\Delta']=\lb \gamma_{f,i_1} \otimes...\otimes \gamma_{f,i_n}\rb_*
 \lb \delta_{ {i_1}}\otimes...\otimes \delta_ {i_n}\rb_* [\Delta],$$
 where $\gamma_{f,i}:=\delta'_{i}\circ \Gamma_{f}\circ  \delta_{i}$.
 (1) follows. In (2), claim: there exists $z_i\in \DC^1(X'\times X) $ such that 
 $z_i\circ \gamma_{f,i}=\delta_i.$  This can be checked in $\Hom(J,J)_\BQ$ along $J\to J'\to J$. 
 Then apply $\lb z_{ {i_1}}\otimes...\otimes z_ {i_n}\rb_* $ to the last equation. (2) follows.
    \end{proof}
   \begin{rmk}The last claim says that $\gamma_{f,i}:h^{A_i}(X)\to h^{A_i}(X')$ is an isomorphism.  
      \end{rmk}

  \subsection{Deninger--Murre projector}
 Let $A$ be an abelian variety (not necessary simple) over $F$ of dimension $g$ with identity $u$, and dual abelian variety $A^\vee$. For $n\in \BZ$, let   ${n_A}$ be the  ``multiplication by $n$" morphism on $A$.
  Let $\pr \in \Ch^g(A\times A)$  be the $(2g-1)$-th Deninger--Murre projector, that is denoted by $\pi_{2g-1}$ in Theorem \ref{Deninger--Murre projectors}.  
 A
simple but illustrative example occurs when
 $g=1$. Then   $\pr=\delta_{X}$ as  defined in \eqref{delta} with $X=A$ and $e=u$.   

For a variety $X$ over $F$, the abelian group structure of $A$ induces an abelian  group structure on $  \Hom(X,A) $, the set of morphisms of $F$-varieties. 
Regarding $X\times A$ as an abelian scheme over $X$, Corollary \ref{Deninger--Murre projectors2} implies  that the following map  is a homomorphism of abelian groups:  $$ \Hom (X,A) \to  \Ch_{\dim X}(X\times A),\quad f\mapsto {\pr}\circ [\Gamma_f].$$  

 \begin{defn}\label{prf} Denote  the unique  $\BQ$-linear extension $ \Hom (X,A)_\BQ \to  \Ch_{\dim X}(X\times A)$ by $f\mapsto {\pr_f}$.

\end{defn}
 Fix $e \in \Ch_0(X)  $    of degree 1.
\begin{defn}[\cite{YZZ}]\label{Hom0} 
Let $\Hom^0(X,A)$ be the subspace of  $f\in \Hom(X,A)_\BQ $  such that
for some (equivalently for all) representative $\sum_i a_i p_i$ of $e_{\ol F}$, where $a_i\in \BQ$ and $p_i\in X( \ol F)$, we have $\sum_i a_if(p_i)=u$ in $A(\ol F)_{\BQ}$. Here $p_i$ and $u$ are points rather than cycles, and  the addition is the one on $A$ but not for cycles. \end{defn}

Below, assume that  $X$ is a curve.
 \begin{lem}\label{alg0}
For $f\in \Hom^0(X,A) $ and $z\in\Ch^{1}(X\times A)$, we have $z^*   \pr_{f, *} e =0$ in $\Ch^1(X)$.
\end{lem}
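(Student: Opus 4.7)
The proof naturally splits into two halves: first, showing that $\pr_{f,*}e\in\Ch^g(A)$ is a $0$-cycle with both trivial degree and trivial Albanese image (where $g=\dim A$); second, showing that any $z\in\Ch^1(X\times A)$ annihilates such cycles under $z^*$.

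\textbf{First half: analyzing $\pr_{f,*}e$.} Using Definition~\ref{prf}, one has $\pr_{f,*}e=\pi_{2g-1,*}(f_*e)\in\Ch^g(A)$. I would invoke the Beauville decomposition $\Ch^g(A)=\bigoplus_s\Ch^g_{(s)}(A)$, in which $[n]_A^*$ acts by $n^{2g-s}$ on $\Ch^g_{(s)}$. The Deninger--Murre projector $\pi_{2g-1}$ is precisely the projection onto $\Ch^g_{(1)}(A)$. Comparing eigenvalues of $[n]_{A,*}$ then shows that the degree map vanishes on $\Ch^g_{(s)}$ for $s\neq 0$, and that the extended Albanese map $\widetilde{\Alb}:\Ch^g(A)\to A(F)_\BQ$, $\sum a_i[p_i]\mapsto\sum a_ip_i$, vanishes on $\Ch^g_{(s)}$ for $s\neq 1$. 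Consequently $\deg\pr_{f,*}e=0$, and $\widetilde{\Alb}(\pr_{f,*}e)=\widetilde{\Alb}(f_*e)=\sum a_if(p_i)$, which vanishes by the very definition of $\Hom^0(X,A)$.

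\textbf{Second half: decomposing $z$.} In analogy with \eqref{refm2}, one checks that $\Ch^1(X\times A)$ decomposes as $\pi_X^*\Pic(X)\oplus\pi_A^*\Pic(A)\oplus\DC^1(X\times A)$, so write $z=\alpha\times[A]+[X]\times\gamma+z_0$. By the projection formula, for any $0$-cycle $\beta$ on $A$ one has $(\alpha\times[A])^*\beta=(\deg\beta)\cdot\alpha$ and $([X]\times\gamma)^*\beta=0$ (the latter since $\gamma\cdot\beta\in\Ch^{g+1}(A)=0$); both contributions vanish on $\beta=\pr_{f,*}e$ by the first half. For the divisorial piece $z_0\in\DC^1(X\times A)$, I would apply \eqref{Sch3.9} to identify $z_0$ with a homomorphism $J_X\to A^\vee$, equivalently a homomorphism $\psi:A\to J_X=\Pic^0(X)$, realized concretely as $\psi(a)=\iota_a^*z_0$ where $\iota_a:X\hookrightarrow X\times\{a\}\subset X\times A$; the vanishing $\iota_u^*z_0=0$ is built into the definition of $\DC^1$. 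Linear extension to a degree-zero $0$-cycle $\beta=\sum a_i[p_i]$ then gives $z_0^*\beta=\sum a_i\psi(p_i)=\psi(\widetilde{\Alb}(\beta))$, which applied to $\beta=\pr_{f,*}e$ yields $\psi(0)=0$ by the first half.

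\textbf{Main obstacles.} The chief technical points are (i) verifying the promised eigenvalue behavior of $\deg$ and $\widetilde{\Alb}$ on the Beauville pieces, and (ii) checking that the set-theoretic map $a\mapsto\iota_a^*z_0$ is actually a homomorphism of abelian varieties (so that one may legitimately extend to $\BQ$-linear combinations). Both are standard consequences of rigidity for abelian varieties and the universal property underlying Schoen's isomorphism \eqref{Sch3.9}, but they deserve to be spelled out for the argument to be complete.
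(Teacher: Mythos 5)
Your proof is correct, but it deliberately unpacks what the paper treats as a black box, so the two arguments read quite differently. The paper's proof is two lines: it replaces $f_*e$ by $f_*e-[u]$ using Corollary \ref{Deninger--Murre projectorscor} (2), observes that $f_*e-[u]$ is a degree-zero $0$-cycle killed by the Albanese (Abel--Jacobi) map by the very definition of $\Hom^0(X,A)$, and then invokes the functoriality of the Abel--Jacobi map under the correspondences $\pr$ and $z$ together with the vanishing of the Abel--Jacobi kernel for divisors on a curve. You reach the same pivot --- that $\pr_{f,*}e$ is a degree-zero, Albanese-trivial $0$-cycle --- by a different route (the eigenvalue computation $n_{A,*}=n^{s}$ on the Beauville piece $\Ch^g_{(s)}$, rather than Corollary \ref{Deninger--Murre projectorscor} (2)), and then you prove by hand the functoriality statement the paper only cites: you decompose $\Ch^1(X\times A)$ as in \eqref{refm2} into the two product pieces plus $\DC^1(X\times A)$, kill the product pieces by the degree-zero condition and dimension reasons, and show that the divisorial piece acts on $0$-cycles through the homomorphism $A\to J_X$, $a\mapsto \iota_a^*z_0$, furnished by rigidity/seesaw (which is exactly the mechanism behind \eqref{Sch3.9}). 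Both of your flagged obstacles are indeed the only real content and are standard; the one small point you should add is that representatives of $e$ (hence of $\pr_{f,*}e$) need not be supported on $F$-rational points, so the pointwise computations should be done after base change to $\ol F$, using the injectivity of $\Ch^1(X)_\BQ\to\Ch^1(X_{\ol F})_\BQ$ to descend the vanishing --- a wrinkle the paper's definition of $\Hom^0$ already anticipates. The trade-off: the paper's argument is shorter and leans on classical Picard/Albanese functoriality, while yours is self-contained within the tools the paper develops (Deninger--Murre eigenspaces and the $\DC^1$ decomposition) at the cost of length.
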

\begin{proof}We may assume that $f\in \Hom(X,A)$. Then we have  $\pr_{f, *} e={\pr}_*(f_*e)$.
By Corollary \ref{Deninger--Murre projectorscor} (2), ${\pr}_*(f_*e)={\pr}_*(f_*e-[u]).$
By definition, $ (f_*e-[u])$ is in the kernel of the Abel--Jacobi map. So $z^*\lb{\pr}_*(f_*e)\rb=0$ is in the kernel of the Abel--Jacobi map  for $X$ (by the functoriality of the Abel--Jacobi map), which is 0.  
\end{proof}
Let $ \DC^1(X\times A) $ be  defined as in \ref{Divisorial correspondences} with $X'=A,e'=u$.

  \begin{cor}\label{trans?}
   For $f  \in \Hom^0(X,A)$ and $z\in  \DC^1(X\times A) $,  we have $  \pr_f^t\circ z \in \DC^1(X^2).
 $  

 \end{cor}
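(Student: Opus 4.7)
The plan is to verify directly the two defining conditions of $\DC^1(X^2)$ for the cycle $w := \pr_f^t \circ z$, by combining the standard functoriality of correspondence composition with Lemma \ref{alg0}.

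First I would check the codimension. Since $\pr \in \Ch^g(A \times A)$, we have $\pr_f \in \Ch^g(X \times A)$ and hence $\pr_f^t \in \Ch^g(A \times X)$. The correspondence composition formula gives
\[
\pr_f^t \circ z \in \Ch^{1 + g - g}(X \times X) = \Ch^1(X^2),
\]
so $w$ lives in the right group to be tested for membership in $\DC^1(X^2)$.

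Next I would verify the two vanishing conditions. For the pushforward, functoriality of correspondences gives $w_* e = (\pr_f^t)_* (z_* e)$, and $z_* e = 0$ by hypothesis $z \in \DC^1(X \times A)$, so $w_* e = 0$ immediately. For the pullback,
\[
w^* e = z^* \bigl((\pr_f^t)^* e\bigr) = z^*(\pr_{f,*} e),
\]
using the standard identity $(\pr_f^t)^* = (\pr_f)_*$ (which is just the interchange of pushforward and pullback under transposition of the correspondence). This is precisely the expression that Lemma \ref{alg0} asserts to be $0$, so $w^* e = 0$ as well.

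There is no genuine obstacle here: the statement is essentially a repackaging of Lemma \ref{alg0}. The only thing to be careful about is the bookkeeping of pushforward/pullback under transposition and composition of correspondences, which is routine. Once that is set up correctly, both defining conditions of $\DC^1(X^2)$ fall out, the first from the hypothesis on $z$ and the second from Lemma \ref{alg0}.
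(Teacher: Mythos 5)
Your proposal is correct and follows essentially the same route as the paper: the pushforward condition is immediate from $z_*e=0$ in the definition of $\DC^1(X\times A)$, and the pullback condition reduces, via $(\pr_f^t)^*=\pr_{f,*}$ and functoriality of composition, to exactly the vanishing $z^*(\pr_{f,*}e)=0$ supplied by Lemma \ref{alg0}. The extra codimension check is harmless bookkeeping that the paper leaves implicit.
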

  \begin{proof}
  By definition,  $(\pr_f^t\circ z)_{*}e=0$.
 By   Lemma \ref{alg0}, we have  $
(\pr_f^t\circ z)^{*}e= z^\ast (\pr_{f,\ast} e)=0.
$  \end{proof}
\subsection{Projectors for a curve II}
Let $J=J_X$ be the Jacobian variety of $X$ with the canonical principal polarization $J\cong J^\vee$.
Taking dual homomophisms defines
\begin{equation}\label{dualh}
\Hom(J,A)\cong \Hom(A^\vee,J).
\end{equation} 
 
The canonical embedding $X\incl \Pic^1_{X/F}$ and $e\in \Ch^1(X)$ define an   element in $\Hom^0(X,J)$.
The Albanese property of the Jacobian variety then induces an isomorphism 
\begin{equation}\label{alb}
\Hom^0(X,A)\cong\Hom(J,A)_\BQ.
\end{equation}

   \begin{prop}\label{trans} (1) The following natural maps  induced by pushforward, pullback, and pushforward, respectively,  of divisors
 $$
 \DC^1(X\times A)\to \Hom(J,A^\vee)_\BQ,
 $$
 $$
\Hom^0(X,A)\to \Hom(A^\vee, J)_\BQ,
 $$
   $$
 \DC^1(X^2)\to \End(J)_\BQ
 $$
   are isomorphisms of $\BQ$-vector spaces.  Moreover, the last one is an isomorphism of $\BQ$-algebras.
   
    (2) For $z\in \DC^1(X\times A)$,  we have $\pr^t \circ z =z$.

      (3)  
    Under the isomorphisms in (1), the map 
 $$
\Hom^0(X,A)\times  \DC^1(X\times A)  \to \DC^1(X^2),\quad
 (f,z)\mapsto  \pr_f^t\circ z 
 $$
 is identified  with  
 the natural composition 
 $$
 \Hom(A^\vee, J)_\BQ \times  \Hom(J,A^\vee)_\BQ \to  \End(J)_\BQ.
 $$

  \end{prop}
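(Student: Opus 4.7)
The plan is to reduce all three parts to standard facts about Picard/Albanese functoriality of abelian varieties, combined with the defining properties of the Deninger--Murre projector $\pr = \pi_{2g-1}$.

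For (1), the third isomorphism is exactly the Schappacher isomorphism \eqref{Sch3.9} restricted to $\DC^1(X^2)$. That it respects the ring structure follows because correspondence composition of divisorial correspondences on $X^2$ corresponds, under pushforward of divisors, to composition of endomorphisms on $J$; this is contravariant functoriality of the Picard scheme. The first isomorphism is the same construction in the mixed setting $X\times A$, using that the Picard scheme of $A$ is $A^\vee$. The second isomorphism is obtained by composing the Albanese isomorphism \eqref{alb}, $\Hom^0(X,A)\cong \Hom(J,A)_\BQ$, with the duality \eqref{dualh}, $\Hom(J,A)_\BQ\cong \Hom(A^\vee,J)_\BQ$, induced by the principal polarization of $J$.

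For (2), I would transport the map $z\mapsto \pr^t\circ z$ to the Hom side using the first isomorphism of (1). The key input is that the transpose of the Deninger--Murre projector $\pi_{2g-1}$ is $\pi_1$, which acts as the identity on $\Pic^0(A)_\BQ\subset \Ch^1(A)_\BQ$; alternatively, this identity action on degree-zero divisors can be extracted directly from Corollary \ref{Deninger--Murre projectors2} applied to the Albanese morphism. Since every $z\in \DC^1(X\times A)$ restricts fiberwise over points of $X$ to classes in $\Pic^0(A)$, composing with $\pr^t$ acts fiberwise as identity, giving $\pr^t\circ z=z$.

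For (3), once (1) and (2) are in place, the desired identification is a diagram chase. Applying (2) gives $\pr_f^t\circ z = [\Gamma_f]^t\circ \pr^t\circ z = [\Gamma_f]^t\circ z$, and this divisorial correspondence on $X^2$, under the Schappacher isomorphism, is precisely the composite endomorphism $J\xrightarrow{\psi}A^\vee\xrightarrow{\phi}J$, where $\psi\in\Hom(J,A^\vee)_\BQ$ corresponds to $z$ and $\phi\in\Hom(A^\vee,J)_\BQ$ corresponds to $f$. I expect the main obstacle to be part (2): pinning down that the transpose Deninger--Murre projector acts as the identity on $\Pic^0(A)_\BQ$ requires either invoking the full Chow--K\"unneth machinery (including the symmetry $\pi_i^t = \pi_{2g-i}$) or carefully deriving the needed action from the referenced appendix corollary without circularity.
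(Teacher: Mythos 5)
Your route is essentially the paper's: (1) is \eqref{Sch3.9} combined with \eqref{dualh} and \eqref{alb}; (2) rests on $\pr^t=\pi_{2g-1}^t=\pi_1$ acting as the identity on degree-zero divisor classes of $A$, which is exactly Corollary \ref{Deninger--Murre projectorscor} (1) and (3) (so your worry about circularity or needing extra Chow--K\"unneth machinery is unfounded); and (3) is the same reduction $\pr_f^t\circ z=[\Gamma_f^t]\circ \pr^t\circ z=[\Gamma_f^t]\circ z$ followed by unwinding the definitions of the isomorphisms in (1), after the harmless reduction to $f\in\Hom(X,A)$.

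There is, however, a real gap in your argument for (2). Showing that $z$ and $\pr^t\circ z$ push forward degree-zero divisor classes of $X$ in the same way (your ``fiberwise identity'' step) only shows they have the same image in $\Hom(J,A^\vee)_\BQ$ under the pushforward map defined on all of $\Ch^1(X\times A)$; that map is far from injective (it kills classes of the form $D\times [A]$ and $[X]\times L$), so equality of induced homomorphisms does not by itself give equality of cycle classes. To ``transport to the Hom side using the first isomorphism of (1)'' you must first verify that $\pr^t\circ z$ again lies in $\DC^1(X\times A)$, where \eqref{Sch3.9} is injective. The paper does exactly this: $(\pr^t\circ z)_*e=\pr^t_*\lb z_*e\rb=0$ since $z\in\DC^1(X\times A)$, and $(\pr^t\circ z)^*u=z^*(\pr_*u)=0$ by Corollary \ref{Deninger--Murre projectorscor} (2), i.e. $\pi_{k,*}[u]=0$ for $k\neq 2g$ --- an input your sketch never invokes. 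Also, your proposed alternative of extracting the identity action on $\Pic^0(A)_\BQ$ from Corollary \ref{Deninger--Murre projectors2} does not work as stated: that corollary concerns additivity of $a\mapsto \pi_{2g-1,*}[a]$ on points (zero-cycles), not the action of $\pi_1$ on divisors; the correct reference is Corollary \ref{Deninger--Murre projectorscor} (3). With the membership check added, your proofs of (2) and (3) coincide with the paper's.
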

 \begin{proof}
 (1) The first and third isomorphisms are special cases of \eqref{Sch3.9}.
 The second isomorphism follows from    \eqref{dualh} and \eqref{alb}  by definition.

(2) By definition, we have $(\pr^t \circ z)_{*}e=\pr^t_* \lb z_{*}e\rb=0$. 
By Corollary \ref{Deninger--Murre projectorscor} (2), we have $(\pr^t\circ z)^{*}u=z^\ast ( \pr_* u)=0.$
Thus $\pr^t \circ z\in \DC^1(X\times A)$.   By Corollary \ref{Deninger--Murre projectorscor} (1) (3), 
their images under the first isomorphism in   (1) are the same.  (2) follows.

(3) We may assume that $f\in \Hom(X,A)$.
 Then we have $ \pr_f^t \circ z= [\Gamma_f^t]\circ \pr^t \circ z=[\Gamma_f^t]\circ z$, where the last equation is by (2).
Now (3) follows from definition. 
 \end{proof}



We now apply the discussion in  \S\ref{Decomposition of the identity}  to $J=J_X$.  
 For $\phi\in \pi_A$, let $c_\phi\in  \DC^1(X\times A^\vee)$   (resp. $ f_{\phi}\in  \Hom^0(X,A ) $) be the corresponding element via  the first isomorphism in Proposition \ref{trans} (1) (resp. \eqref{alb}). 
 Taking dual homomophisms defines an anti-isomorphism $M_A\cong M_{A^\vee}$.
Identify $ \wt \pi_A$ and $ \pi_{A^\vee}$, as $M_A$-modules,  via \eqref{dualh} with $A$ replaced by $A^\vee$.
Then   the dual basis  $\{\wt \phi :\phi\in \cB_A\}\subset \wt \pi_A$ of $\cB_A$ 
becomes a basis of
 $ \pi_{A^\vee}$, and we denote it by $\cB_{A^\vee}$.
  \begin{prop}\label{vanlem2} 
 For the  $\pi_A$-isotypic  and  $\pi_{A^\vee}$-isotypic projectors $ \delta_{A},\delta_{{A^\vee}}\in\DC^1(X^2)$, we have
 \begin{equation}\label{5terms}
\delta_{A}  = \sum_{\phi\in \cB_A}\pr_{f_{\wt\phi}}^t\circ c_\phi
 =  \sum_{\phi\in \cB_A}c_{\wt\phi}^t\circ\pr_{f_{\phi}} ,
 \end{equation} 
 and
 \begin{equation}\label{5terms'}
\delta_{{A^\vee}}  = \sum_{\wt\phi\in \cB_{A^\vee}}\pr_{f_{\phi}}^t\circ c_{\wt\phi}
=  \sum_{\wt\phi\in \cB_{A^\vee}}c_{\phi}^t\circ\pr_{f_{\wt\phi}}   
.
 \end{equation}

 \end{prop}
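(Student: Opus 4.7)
The plan is to translate each of the four identities into the ring $\End(J)_\BQ$ via the isomorphism $\DC^1(X^2)\cong \End(J)_\BQ$ of Proposition \ref{trans}(1), and to recognize the right-hand side as the explicit expression \eqref{decomproj} for the $\pi_A$- (respectively $\pi_{A^\vee}$-) isotypic projector of $\End(J)_\BQ$.

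For the first equality of \eqref{5terms}, I would apply Proposition \ref{trans}(3) with $A$ replaced throughout by $A^\vee$: the map $(f,z)\mapsto \pr_f^t\circ z$ on $\Hom^0(X,A^\vee)\times\DC^1(X\times A^\vee)$ corresponds, under the isomorphisms of part (1), to composition $\Hom(A,J)_\BQ\times\Hom(J,A)_\BQ\to\End(J)_\BQ$. Under this dictionary, $f_{\wt\phi}$ corresponds to $\wt\phi\in \wt\pi_A=\Hom(A,J)_\BQ$ (via the identification $\wt\pi_A\cong\pi_{A^\vee}$ supplied by \eqref{dualh} and the principal polarization $J\cong J^\vee$), while $c_\phi$ corresponds to $\phi\in\pi_A=\Hom(J,A)_\BQ$. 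Hence $\pr_{f_{\wt\phi}}^t\circ c_\phi$ is sent to $\wt\phi\circ\phi$, and summing over $\cB_A$ gives exactly the $\pi_A$-isotypic projector \eqref{decomproj}, i.e.\ $\delta_A$.

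The second equality of \eqref{5terms} follows by applying Proposition \ref{trans}(3) to the pairs $(f_\phi,c_{\wt\phi})$ and then taking transposes, once one checks that transpose on $\DC^1(X^2)$ corresponds to the dual-homomorphism involution on $\End(J)_\BQ$ under the principal polarization: the composite $(\pr_{f_\phi}^t\circ c_{\wt\phi})^t=c_{\wt\phi}^t\circ\pr_{f_\phi}$ then corresponds to $(\phi^\vee\circ\wt\phi^\vee)^\vee=\wt\phi\circ\phi$, and we again recover $\delta_A$. The two equalities of \eqref{5terms'} are obtained from those of \eqref{5terms} by the symmetry $A\leftrightarrow A^\vee$, $\cB_A\leftrightarrow\cB_{A^\vee}$, which exchanges the two sums and the two projectors (and is compatible with the pairing \eqref{duality}).

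The main point of vigilance, and the likely obstacle, is the bookkeeping of the four $\Hom$-spaces $\Hom(J,A)$, $\Hom(A^\vee,J)$, $\Hom(J,A^\vee)$, $\Hom(A,J)$ under \eqref{dualh} and $J\cong J^\vee$: one needs to verify that the element of $\Hom(A,J)_\BQ$ paired with $\phi\in\cB_A$ under \eqref{duality} is precisely the $\wt\phi$ appearing in $c_{\wt\phi}$ and $f_{\wt\phi}$, so that the composite in $\End(J)_\BQ$ is $\wt\phi\circ\phi$ on the nose rather than some variant involving a dual homomorphism. Once this is in place, the proposition is a formal consequence of Proposition \ref{trans}(3) and the formula \eqref{decomproj}.
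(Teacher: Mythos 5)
Your plan is correct and follows essentially the same route as the paper: translate everything into $\End(J)_\BQ$ via Proposition \ref{trans} (1) and (3), recognize the sums as the isotypic projector \eqref{decomproj}, and obtain the remaining identities from the fact that transpose on $\DC^1(X^2)$ corresponds to dual homomorphisms (which also yields $\delta_A^t=\delta_{A^\vee}$), the only difference being a trivial reorganization of which equalities are proved directly and which by transposing. The bookkeeping point you flag (that $f_{\wt\phi}$, $c_{\wt\phi}$ correspond to $\wt\phi$ under \eqref{dualh} and \eqref{alb} so the composite is literally $\wt\phi\circ\phi$) is exactly what the paper's proof relies on as well.
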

  \begin{proof}
  Under $ \DC^1(X^2) \cong \End(J) $, taking transpose on $ \DC^1(X^2)$ corresponds to taking dual homomorphisms in $\End(J(X)) $. Then the remark under \eqref{decomproj} gives
\begin{equation}   \label{deltapid}
\delta_{A }^t= \delta_{{A^\vee}}.
\end{equation}
 Let $\delta_i$ be the  $i$th term in \eqref{5terms}  (so that $\delta_1=\delta_{A}$). 
Let $\delta_i'$ be the  $i$th term in \eqref{5terms'}. 
Then we have $\delta^t_2=\delta'_3$ and  $\delta'^t_2=\delta_3$.
By Corollary \ref{trans?}    and taking transpose, all $\delta_i$'s and $\delta_i'$'s  are in $\DC^1(X^2)$.
By   \eqref{decomproj} and  Proposition \ref{trans}  (3), we have $\delta_1=\delta_2$. Similarly, we have $\delta_1'=\delta_2'$. 
and hence $\delta_1=\delta'^t_1=\delta'^t_2=\delta_3$.
The proposition follows.
 \end{proof}

 \begin{cor}\label{vancor} For $i=1,...,n$, let
   $X_i$ be a  smooth projective  curve over $F$.   
Let $e_i\in \Ch^1(X_i) $ be  of degree one, and $ \delta_{X_i}$ defined as in \eqref{delta}.
  Let  $z\in   \Ch^d(X_1\times...\times X_n) $. Then 
 $\lb \delta_{X_1}\otimes...\otimes \delta_{X_n}\rb_* z=0$ holds if one of the following conditions  holds:

(1)  for  every $n$-tuple $(A_1,...,A_n) $, where $A_i$ is a   simple   isogeny  factor   $J_{X_i}$,   and   every
$f_i\in \Hom^0(X_i, A_i)_\BQ $,  we have
$ \lb  \pr_{f_1}\otimes...\otimes   \pr_{f_n}\rb_* z =0.$

(2)  for  every $n$-tuple $(A_1,...,A_n) $ as in (1) and
$c_i\in
\DC^1(X_i, A_i)$, we have $ \lb  {c_1}\otimes...\otimes   {c_n}\rb_* z =0.$

\end{cor}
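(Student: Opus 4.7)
The plan is to expand $\delta_{X_1} \otimes \cdots \otimes \delta_{X_n}$ via the decompositions supplied by Proposition \ref{vanlem2} and reduce each resulting summand to the given hypothesis using the compatibility of correspondence composition with external product. First, by \eqref{eq:EndJ} the identity of $\End(J_{X_i})_\BQ$ is the sum of the $\pi_{A_i}$-isotypic projectors over simple isogeny factors $A_i$ of $J_{X_i}$; transferring via the third isomorphism of Proposition \ref{trans}(1) yields $\delta_{X_i} = \sum_{A_i} \delta_{X_i, A_i}$ in $\DC^1(X_i^2)$. Multilinearity of the external product gives
$$\delta_{X_1} \otimes \cdots \otimes \delta_{X_n} = \sum_{(A_1, \dots, A_n)} \delta_{X_1, A_1} \otimes \cdots \otimes \delta_{X_n, A_n},$$
so it is enough to prove that each summand pushes $z$ forward to zero.

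For (1), I would substitute the second expression of Proposition \ref{vanlem2}, namely $\delta_{X_i, A_i} = \sum_{\phi_i \in \cB_{A_i}} c_{\wt\phi_i}^t \circ \pr_{f_{\phi_i}}$, and use the standard identity
$$(\alpha_1 \circ \beta_1) \otimes \cdots \otimes (\alpha_n \circ \beta_n) = (\alpha_1 \otimes \cdots \otimes \alpha_n) \circ (\beta_1 \otimes \cdots \otimes \beta_n)$$
for correspondences to expand $(\delta_{X_1, A_1} \otimes \cdots \otimes \delta_{X_n, A_n})_* z$ as a finite sum of terms
$$(c_{\wt\phi_1}^t \otimes \cdots \otimes c_{\wt\phi_n}^t)_* \bigl(\pr_{f_{\phi_1}} \otimes \cdots \otimes \pr_{f_{\phi_n}}\bigr)_* z.$$
Since $f_{\phi_i} \in \Hom^0(X_i, A_i)$ by \eqref{alb}, hypothesis (1) annihilates the inner pushforward, and hence every term.

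For (2), I would instead use the first expression $\delta_{X_i, A_i} = \sum_{\phi_i \in \cB_{A_i}} \pr_{f_{\wt\phi_i}}^t \circ c_{\phi_i}$; the same compatibility produces terms
$$\bigl(\pr_{f_{\wt\phi_1}}^t \otimes \cdots \otimes \pr_{f_{\wt\phi_n}}^t\bigr)_* \bigl(c_{\phi_1} \otimes \cdots \otimes c_{\phi_n}\bigr)_* z.$$
Here $c_{\phi_i}$ lies in $\DC^1(X_i \times A_i^\vee)$ rather than $\DC^1(X_i \times A_i)$, so to invoke hypothesis (2) I would observe that the canonical principal polarization $J_{X_i} \cong J_{X_i}^\vee$ implies the set of simple isogeny factors of $J_{X_i}$ is stable under $A \mapsto A^\vee$; hypothesis (2) applied to the tuple $(A_1^\vee, \dots, A_n^\vee)$ then kills each inner pushforward. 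The only real bookkeeping is verifying the composition/external-product interchange and tracking the target variety of each $c_{\phi_i}$; I do not foresee a serious obstacle beyond this formal matching of ambient varieties and the use of duality for simple isogeny factors.
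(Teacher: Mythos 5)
Your proof is correct and is essentially the argument the paper leaves implicit for this corollary: write each $\delta_{X_i}$ as the sum of the isotypic projectors $\delta_{X_i,A_i}$, substitute the two expressions of Proposition \ref{vanlem2}, and use the compatibility of correspondence composition with external products so that the inner pushforward is killed by the hypothesis. Your bookkeeping in case (2) is also right: since $J_{X_i}$ is principally polarized, dualization permutes its simple isogeny factors, so applying hypothesis (2) to the tuple $(A_1^\vee,\dots,A_n^\vee)$ is legitimate.
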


   \section{The case of a Shimura curve}
   We use the results in the last section to decompose Hecke correspondences on Shimura curves.
Then    we study the automorphic structure of   Chow groups for a product of Shimura curves and prove the vanishing of certain
isotypic components of the diagonal cycle as predicted by the injectivity of the Abel--Jacobi map.  
Then we prove the theorems in the introduction.
    
    \subsection{Shimura curves and Hecke correspondences}\label{Shimura curves and Hecke correspondences}

  Let $F$ be a totally real number field, $\BA$  its ring of adeles and $\BA_\rf$  its ring of finite adeles.  A quaternion algebra $B $ over  $F$ is called {\em almost definite} if 
$B$ is  a matrix algebra at  one archimedean place $\tau:F\incl \BR$ and division at all other archimedean places of $E$.   For such an almost definite  quaternion algebra $B $,   and  an open compact subgroup $K$ of $B^\times(\BA_\rf)$, we have the smooth   compactified   Shimura curve $ X_K$ for  $B^\times$ of level $K$ over $F$ \cite[3.1.1]{YZZ}.   The  complex uniformization  of  $X_K$ via $\tau: F\incl \BR\subset \BC$ is given by
\begin{equation*}X_{K,\tau,\BC} \cong B^+ \bsl \BH \times B^\times(\BA_\rf)/K\coprod \{\text{cusps}\}.\end{equation*}
Here $B^+\subset B^\times$ is the subgroup of elements with totally positive norms (equivalently, with positive norms at $\tau$), $\BH$ is the complex upper half plane, and 
 the cusps exist if and only if $X_K $ is a modular curve, i.e. $F=\BQ$, and $B $ is the matrix algebra.  

   For $g\in B^\times(\BA_\rf)$, we have  the right multiplication isomorphism $\rho(g) :X_{gKg^{-1}}\cong X_K$.  Indeed, under  the above  complex uniformization  of  $X_K$, $\rho(g)$ acts by right multiplication on $B^\times(\BA_\rf)$. We also have      the natural projections $\pi_1:X_{K\cap gKg^{-1}}\to X_K$,    $\pi_2:X_{K\cap gKg^{-1}}\to X_{gKg^{-1}}$.
 Define the Hecke correspondence $Z(g)$ to be  the algebraic cycle on $X_K^2$ that is  the direct image of the fundamental   cycle   of $X_{K\cap gKg^{-1}}$ via $(\pi_1,\rho(g)\circ\pi_2)$. 
In particular, $Z(1)$ is the diagonal of $X_K^2$.  

       The  Hodge  class  on $X_K$  for holomorphic modular forms of weight two is the        
        canonical   class     modified by the ramification points  \cite[3.1.3]{YZZ}. 
   (If  $K$ is neat,  it is the canonical   class        plus the sum of all cusps.) 
                 It is compatible under pullback and pushforward (up to degree) by  $\rho(g)$ and the natural morphisms of Shimura curves as the level  $K$ changes.  Let  $\vol(X_K)\in \BQ_{>0}$ be the degree of the Hodge  class on $X_K$.     
Then $$ d(g):=\deg \pi_1=\deg \pi_2=\frac{\vol\lb X_{K\cap gKg^{-1}}\rb}{ \vol\lb X_K\rb}.$$

    Let $e=e_K$ be the Hodge class  on $X_K$ divided by  $\vol(X_K)$  so that $e$ is of degree 1.  
      Then, we have 
 \begin{equation}\label{cHDZ}[Z(g)]_* e=[Z(g)]^* e=d(g)  e.
 \end{equation}
 Let 
 \begin{equation}\label{edelta}
 \wh e= e\times [X_K] +[X_K]\times e,\quad \delta=[Z(1)]-\wh e.
 \end{equation} 
   Then  $\delta$ coincides with  \eqref {delta} with $X=X_K$. 
And
 \eqref{cHDZ} implies that 
  \begin{equation}
  \label{cHDe}[Z(g)]\circ \wh e=d(g) \wh e.
 \end{equation} 
 
     Let $\cH_K$ be the convolution Hecke algebra of bi-$K$-invariant $\BC$-valued functions on $ B^\times(\BA_\rf)$ (with a given Haar measure on $ B^\times(\BA_\rf)$).
  Then $f\in \cH_K$  defines a   Hecke correspondence $$Z(f):=\sum_{g\in K\bsl B^\times(\BA_\rf)/K }\frac{\vol(KgK) }{d(g)}f(g) Z(g)\in \DC(X_K^2) _\BC  .$$
  The coefficients are used to make the following map    a $\BC$-algebra anti-homomorphism:
  \begin{equation}
  \label{HDC}\cH_K\to \DC(X_K^2)_{\BC},\quad f\mapsto [Z(f)].
  \end{equation}

 \subsection{Automorphic representations} \label{Automorphic representations}

For a simple abelian variety  $A$ over $F$, let 
 $$
\Pi_A=\leftidx{_B} \Pi_A:=\vil_K\Hom^0(X_K,A),
 $$
where $\Hom^0$ is as in Definition \ref{Hom0}, and we omit the left subscript untilt  \S \ref
{ss:thm2} where we use a  new quaternion algebra.
  The $\BQ$-vector space $\Pi_A$ carries a natural left action by $B^\times(\BA_\rf)$, induced by precomposing  the morphism $\rho(g) $ for $ g\in B^\times(\BA_\rf)$. (Indeed, $\rho $ induces a right action of $B^\times(\BA_\rf)$ on $\vpl X_K$.)
Then  the subspace of $K$-invariants is $\Pi_A^K=\Hom^0(X_K,A).$ 
 Let $M_A=\End(A)_\BQ$ so that  $M_A$ acts on $\Pi_A^K,\Pi_A$ from left by post-composition, and one may regard $\Pi_A$ as a $B^\times(\BA_\rf)$-representation with $M$-coefficients.

  \begin{thm}[{\cite[3.2.2, 3.2.3]{YZZ}}]\label{YZZ322} 
  Let $A$ be a simple abelian variety over $F$ such that 
 $
 \Pi_A\neq 0$. 
 
(1) The $\BQ$-algebra $M_A$ is a finite field extension of $\BQ$ and $[M_A:\BQ]=\dim A$.

(2)   Consider the set of pairs 
$ (A,\iota)$ where $A$ is a simple abelian variety over $F$ such that 
 $
 \Pi_A\neq 0$ and  $\iota: M_A\incl \BC$ is an embedding. The map $(A,\iota)\to \Pi_{A,\iota}$
 is a bijection to  the set of finite  components of 
   automorphic representations of $B^\times$, whose Jacquet--Langlands correspondence to $\GL_{2,F}$ is cuspidal and holomorphic of weight 2.

 \end{thm}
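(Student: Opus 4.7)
The plan is to rewrite $\Pi_A$ via the Albanese isomorphism \eqref{alb} as $\vil_K \Hom(J_{X_K},A)_\BQ$, and then to invoke the Eichler--Shimura decomposition of $H^1(X_{K,\bar F},\BQ_\ell)$ together with the Jacquet--Langlands correspondence to identify this limit with the $A$-isotypic part of a sum of weight-two automorphic representations.

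Concretely, fix $K$ small enough that $\Hom(J_{X_K},A)_\BQ\ne 0$, so that $A$ is an isogeny factor of $J_{X_K}$. The space $\Hom(J_{X_K},A)_\BQ$ then inherits a left $M_A$-action (post-composition) and a right $\cH_K$-action via the Hecke algebra acting on $J_{X_K}$ through \eqref{HDC}, and upon passing to the limit $\Pi_A$ becomes a smooth $(M_A,B^\times(\BA_\rf))$-bimodule. For (1), I would compare two decompositions of the $\ell$-adic Tate module $T_\ell J_{X_K}$: on one hand, the isogeny decomposition exhibits an $A$-isotypic summand whose $M_A\otimes_\BQ \BQ_\ell$-structure records the multiplicity of $A$; on the other hand, Matsushima's formula for $H^1(X_{K,\BC},\BQ)$ together with Jacquet--Langlands writes this cohomology as a direct sum, indexed by cuspidal holomorphic weight-two automorphic representations $\pi$ of $\GL_{2,F}$, of spaces of the form $\pi_\rf^K\otimes V_{\pi_\infty}$ with $\dim V_{\pi_\infty}=2$ at $\tau$ and trivial at other infinite places. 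Matching the two, the $A$-summand must be isotypic for a single Galois orbit of such $\pi$; the commuting $M_A$-action then forces $M_A$ to be a number field, identified with the rationality field of $\pi_\rf$ inside $\BC$, and the identity $[M_A:\BQ]=\dim A$ reads off from the fact that $T_\ell A$ is free of rank $2$ over $M_A\otimes_\BQ \BQ_\ell$ --- the ``$\GL_2$-type'' condition.

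For (2), the assignment $(A,\iota)\mapsto \Pi_{A,\iota}:=\Pi_A\otimes_{M_A,\iota}\BC$ is well-defined as an admissible smooth $B^\times(\BA_\rf)$-representation, and the analysis above identifies it with the finite component of a unique automorphic representation of $B^\times$ whose Jacquet--Langlands transfer to $\GL_{2,F}$ is cuspidal and holomorphic of weight $2$. Injectivity follows from strong multiplicity one for $\GL_2$ together with the uniqueness of the rationality field of $\pi_\rf$ inside $\BC$. Surjectivity is the Eichler--Shimura construction in the opposite direction: given such $\pi$ with rationality field $M$, form the quotient of $J_{X_K}$ cut out by the kernel of the algebra map $\cH_K\otimes\BC\to\End(\pi_\rf^K)$ and descend it to an abelian variety $A/F$ with $M_A\cong M$.

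The main obstacle is this descent step for a general quaternionic Shimura curve: one must descend the $\pi$-isotypic quotient from $\bar F$ to $F$ compatibly with the $M$-action, and verify that the resulting $A$ is simple of the expected dimension. This relies on Shimura's reciprocity law and the $F$-rationality of the Hecke correspondences $Z(g)$ established in \S\ref{Shimura curves and Hecke correspondences}; in YZZ it is carried out by working directly with the integral/canonical model of $X_K$ and the induced Galois action on its Jacobian.
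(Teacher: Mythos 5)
The paper does not prove this statement at all: it is imported verbatim from \cite[3.2.2, 3.2.3]{YZZ}, so the only proof to compare against is the one given there. Your sketch follows essentially that same route (the one the unlabeled remark after Proposition \ref{Heckimage} alludes to): decompose the cohomology of the Shimura curve via Eichler--Shimura and Jacquet--Langlands into weight-two cuspidal automorphic constituents, read off the Hecke (rationality) field, and realize $A$ as a $\pi$-isotypic quotient of $J_{X_K}$. So you are reconstructing the cited proof rather than giving an alternative to anything in this paper.

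Two steps in your sketch are thinner than the actual content of the theorem. First, for (1), matching the isogeny decomposition of $T_\ell J_{X_K}$ with the automorphic decomposition of $H^1$ only yields that the Hecke field embeds into $\End(A)_\BQ$ and that its degree equals $\dim A$; the delicate point is the upper bound, i.e.\ that $\End(A)_\BQ$ is not strictly larger (a CM extension, or a quaternion algebra over a subfield). That requires Galois-theoretic input: Faltings' theorem $\End(A)_\BQ\otimes\BQ_\ell\cong\End_{\Gal(\ol F/F)}(V_\ell A)$ together with the (absolute) irreducibility of the $2$-dimensional $\lambda$-adic representations attached to $\pi$, which is where the Eichler--Shimura congruence relation genuinely enters. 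Your phrase ``the commuting $M_A$-action then forces $M_A$ to be a number field'' asserts precisely the point that needs proof. Second, the ``main obstacle'' you single out --- descending the $\pi$-isotypic quotient from $\ol F$ to $F$ --- is not an obstacle at all: the canonical model $X_K$, its Jacobian, and the Hecke correspondences $Z(g)$ are already defined over $F$, so the quotient of $J_{X_K}$ by the kernel of the Hecke eigenvalue map is constructed over $F$ directly. The real work in \cite{YZZ} for (2) is identifying $\Pi_{A,\iota}$ with $\pi_\rf$ as a $B^\times(\BA_\rf)$-representation (using multiplicity one) and controlling the associated Galois representation, not descent.
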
 

Via \eqref{alb}, we have a right $\End(J(X_K))_\BC$-action on $\Pi_{A,\iota}^K$ by pre-composition.
Then the left $\cH_K$-module structure on $\Pi_{A,\iota}^K$ is also given by the composition  \begin{equation}
 \label{HCCE0}
 \cH_K\to\DC(X_K^2)_\BC\to  \DC^1(X_K^2)_\BC \cong \End(J(X_K))_\BC
 \end{equation} 
 and  the right action of $\End(J(X_K))_\BC$.
In \eqref{HCCE0}, the first map is the anti-homomorphism  \eqref{HDC},  and second is the projection via \eqref{refm2}, and the last one is the last  isomorphism of Proposition \ref{trans}   (1) (i.e. the composition of the last two is  the action of $\DC(X_K^2)$ on $J(X_K)$ by pushforward). 
  
  Note that $ \BC\wh e\subset \DC(X^2)_\BC$ is a subalgebra isomorphic to $\BC$ with multiplicative identity $\wh e$. Then $e$ span its unique simple module. 
Since   $\DC^1(X_K^2)$   annihilates $e$, $\DC^1(X_K^2)_\BC\cap \BC\wh e=0$ and the projection  
$\DC^1(X_K^2)\oplus \BC\wh e\to \BC\wh e$ is realized by the action on $\BC e$.

  \begin{prop} \label{Heckimage}
  (1) The image of  \eqref{HDC}   is $\DC^1(X_K^2)_\BC\oplus \BC\wh e$.   In particular, it is semisimple.

(2) A simple $\cH_K$-module  such  that the $\cH_K$-action factors through  via  \eqref{HDC}  is  
  some $\Pi_{A,\iota}^K$  as  in Theorem \ref{YZZ322}  or  $\mathbbm 1_K$.  Here  $\mathbbm 1_K$ is the representation of $\cH_K$ induced by the trivial representation of $B^\times(\BA_\rf)$.

    \end{prop}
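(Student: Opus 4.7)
The plan is to verify the containment of the image using the decomposition \eqref{refm2} together with \eqref{cHDZ}, then to obtain surjectivity using the structure of $\End(J(X_K))_\BC$ given by \eqref{eq:EndJ} combined with Theorem \ref{YZZ322}, and finally to read off the simple modules via Wedderburn.

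For the containment in (1), from \eqref{cHDZ} and the assignment formula preceding \eqref{refm2}, each $[Z(g)]$ decomposes as $[Z(g)] = d(g)\,\wh e + \delta_g$ with $\delta_g \in \DC^1(X_K^2)_\BQ$. Extending linearly, $[Z(f)] \in \DC^1(X_K^2)_\BC \oplus \BC\wh e$ for every $f \in \cH_K$. The projection onto the $\BC\wh e$ summand is nonzero, since evaluating on $f = \mathbbm 1_K$ already returns a nonzero multiple of $\wh e$, so the image surjects onto that summand.

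For surjectivity onto $\DC^1(X_K^2)_\BC$, I would use Proposition \ref{trans}(1) to identify $\DC^1(X_K^2)_\BC \cong \End(J(X_K))_\BC$, and then, writing $d_A$ for the multiplicity of $A$ in $J(X_K)$, rewrite \eqref{eq:EndJ} after extending scalars as
\[
\End(J(X_K))_\BC \;\cong\; \bigoplus_{(A,\iota)} \End_\BC\bigl(\Pi_{A,\iota}^K\bigr),
\]
using that $\Pi_A^K \cong \Hom(J(X_K),A)_\BQ$ is a left $M_A$-module of rank $d_A$ and so $\Pi_{A,\iota}^K := \Pi_A^K \otimes_{M_A,\iota} \BC$ is $d_A$-dimensional over $\BC$, together with $M_A \otimes_\BQ \BC \cong \prod_\iota \BC$. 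Under this identification and \eqref{HCCE0}, the induced $\cH_K$-action on each $\Pi_{A,\iota}^K$ matches the one coming from the right action of $\End(J(X_K))_\BC$ on $\Hom^0(X_K,A)$ via pre-composition along \eqref{alb}. By Theorem \ref{YZZ322}(2), the $\Pi_{A,\iota}^K$ are pairwise non-isomorphic simple $\cH_K$-modules. The Jacobson density theorem then forces the image of $\cH_K$ in the above direct sum to be all of it.

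Combining the two steps, the image of \eqref{HDC} equals $\DC^1(X_K^2)_\BC \oplus \BC\wh e$, which is semisimple as a product of matrix algebras over $\BC$; this proves (1). For (2), the simple modules of each Wedderburn factor $\End_\BC(\Pi_{A,\iota}^K)$ are exactly the $\Pi_{A,\iota}^K$, while the unique simple module of $\BC\wh e$ is $\BC$ with $f \in \cH_K$ acting by the scalar $d(f) := \sum_g \vol(KgK) f(g)$, which is precisely the $\cH_K$-action on $\mathbbm 1_K$. The main obstacle is the surjectivity step: beyond the formal density argument, one must faithfully match the $\cH_K$-module structure coming from \eqref{HCCE0} with the automorphic representation $\Pi_{A,\iota}$ of Theorem \ref{YZZ322}, tracking compatibility between correspondence composition (an anti-homomorphism), the right action of $\End(J(X_K))$ on $\Hom^0(X_K,A)$ by pre-composition, and the $B^\times(\BA_\rf)$-action on $\Pi_A$ via $\rho(g)$. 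The sign/convention check to identify these as the same simple module is the delicate point, but follows from the conventions in \cite{YZZ}.
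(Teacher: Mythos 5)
Your overall strategy is the same as the paper's: split each $[Z(g)]$ as $d(g)\,\wh e+\delta_g$ with $\delta_g\in\DC^1(X_K^2)$ using \eqref{cHDZ} and \eqref{refm2} to get the containment, then use Theorem \ref{YZZ322}(2) plus Jacobson density to get surjectivity, and read off the simple modules. Your identification $\End(J(X_K))_\BC\cong\bigoplus_{(A,\iota)}\End_\BC\lb\Pi^K_{A,\iota}\rb$ and the computation of the $\mathbbm 1_K$-action on the $\BC\wh e$ factor are fine, and your worry about matching the anti-homomorphism/pre-composition conventions is the same bookkeeping the paper handles via \eqref{HCCE0}.

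There is, however, a genuine gap in the ``combining'' step. You prove (a) the image surjects onto $\BC\wh e$, and (b) the image of the \emph{composite} $\cH_K\to\DC^1(X_K^2)_\BC\cong\End(J(X_K))_\BC$ is everything, and then conclude that the image of \eqref{HDC} itself is all of $\DC^1(X_K^2)_\BC\oplus\BC\wh e$. Surjectivity onto each summand does not imply surjectivity onto the direct sum: a subalgebra of $A\oplus\BC$ surjecting onto both factors can be a proper Goursat-type subalgebra (the graph identifying a one-dimensional Wedderburn factor of $A$ with the $\BC$ summand), and this is exactly the relevant situation here, since $\dim_\BC\Pi^K_{A,\iota}=d_A$ can equal $1$ (e.g. a newform of exact level $K$). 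What is missing is the statement that $\mathbbm 1_K$ is \emph{not isomorphic} to any $\Pi^K_{A,\iota}$ as a simple $\cH_K$-module — which holds because by Theorem \ref{YZZ322}(2) the $\Pi_{A,\iota}$ are finite components of representations whose Jacquet--Langlands transfers are cuspidal, whereas $\mathbbm 1_K$ comes from the trivial representation (concretely, the Hecke eigenvalues differ) — and then a \emph{single} application of the density theorem to the full collection of pairwise non-isomorphic simple modules $\{\Pi^K_{A,\iota}\}\cup\{\mathbbm 1_K\}$, acting on $\DC^1(X_K^2)_\BC\oplus\BC\wh e$ simultaneously. This is how the paper argues, and it closes the gap; with that modification your proof is complete, and part (2) then falls out of the same joint density statement rather than being read off factor by factor.
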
  
  \begin{proof}  Let  $\ol\cH_K$ be the image. By  Lemma \ref{zdelta} and  \eqref{cHDe},
  we have the following inclusion $$\ol\cH_K=\ol\cH_K\circ [Z(1)]=\ol\cH_K\circ(\delta+\wh e)\incl \DC^1(X_K^2)_\BC\oplus \BC\wh e.$$
 By  \eqref{cHDZ}  and  the discussion between Theorem \ref{YZZ322}  and the proposition, this inclusion map is induced by  the action of $\cH_K$ on 
    $\Pi_A^K\otimes_{\BQ} \BC$'s  as $A$ varies, and  $\mathbbm 1_K$. 
   By Theorem \ref{YZZ322} (2), these simple $\cH_K$-modules   are pairwise non-isomorphic.
 By    the density theorem of   Jacobson and Chevalley, 
the first part  of (1) follows. The second part of  (1) follows from the last  isomorphism of 
 \eqref{HCCE0} and the discussion in \S\ref{Decomposition of the identity}.
 
 The proof of  (2) is included in the proof of the first part  of  (1).
         \end{proof}
         \begin{rmk}(1) One may deduce the proposition using cohomology instead of Jacobian. Indeed  Theorem \ref{YZZ322} is proved  in \cite{YZZ} in this way. We use Jacobian to make the discussion closer to \S \ref{An alternative approach}.
         
         (2) The proposition in particular implies that $\wh e$ is a linear combination of Hecke correspondences.
  \end{rmk}
       
    For $\pi$ being one of simple $\cH_K$-module as in Proposition \ref{Heckimage} (2), let 
  \begin{equation}
 \label{proj1}\delta_\pi\in \DC^1(X_K^2)_\BC\oplus \BC\wh e
  \end{equation} 
be the $\pi$-isotypic  projector. 
 Then clearly 
  \begin{equation}
 \label{proj2}
 \delta_{\mathbbm 1_K}=\wh e.
  \end{equation}

 \begin{rmk} \label{cusps}By Theorem \ref{YZZ322},   the $\mathbbm 1_K$-component of $\Ch^1(X_K)_\BC$ is if dimension 1, and thus spanned by $   e$.  When $X_K$ is a modular curve, the divisor class $\fc$ of the sum of all cusps lies in the $\mathbbm 1_K$-component. So it is a $\BQ$-multiple of $e$, and thus both $\fc,e$ are $\BQ$-multiples of the canonical class if  $K$ is neat.
Moreover, by 
  the   theorem of Manin  \cite{Man} and Drinfeld \cite{Dri} on the torsionness of degree 0 cuspidal divisors on modular curves, all of them are equal to any single cusp.
 \end{rmk}
 The other projectors  are given by the base-change-to-$\BC$ version of
 Proposition \ref{vanlem2}.

    \subsection{Product of Shimura curves}  \label{Product of Shimura curves}  
   
 For $i=1,...,n$, let $X_i$ be the Shimura curve for an almost definite quaternion algebra $B_i$ over $F$ of level $K_i\subset B_i^\times(\BA_\rf)$.  Let $\cH=\otimes_{i=1}^n\cH_{K_i}.$   By   Proposition \ref{Heckimage} and \cite[(2.5)]{Lam}, 
  an $\cH$-module  $\cM$ such  that the $\cH$-action factors through  the tensor  of  the anti-homomorphisms  \eqref{HDC}  for all $X_i$'s
 \begin{equation}\label{HDCtensor}
 \cH\to\mathop\otimes_{i=1}^n\DC(X_i^2)_{\BC}
 \end{equation}
   is a direct sum of  simple modules of the form
  $$\pi=\mathop\boxtimes\limits _{i=1}^n \pi_i.$$
  Here   $ \pi_i=\mathbbm 1_{K_i}$ or $\pi_i=\Pi_{A_i,\iota_i}^{K_i}$  
where $ A_i$ is a    simple abelian variety over $F$ such that  $\Pi_{A_i}^{K_i} \neq 0,$ and $\iota_i:M_{A_i}\incl \BC$  is an embedding.    
In particular, $\pi
 $ is automorphic and  it is
  cuspidal if none of the $\pi_i$'s is $\mathbbm 1_{K_i}$ (see Theorem \ref{YZZ322}).
  The $\pi$-isotypic projector   is the tensor of the ones  for each $\pi_i$ (see  \eqref{proj1}):   
$$\delta_{\pi} =\mathop\otimes_{i=1}^n \delta_{\pi_i} \in \mathop\otimes_{i=1}^n\DC^1(X_i^2)_\BC.$$  
  For our purpose, let $  \cM=\Ch^*(X_{1...n})_\BC$, where $X_{1...n}$ is the product of $X_i$'s and $ \otimes_{i=1}^n\DC(X_i^2)_{\BC}
$ acts on $  \Ch^*(X_{1...n,E})_\BC$ by pullback  so that $\cM$ is a left $\cH$-module. 
We may also take arbitrary  field extension of $F$.

  For $\pi$ being cuspidal,   by the obvious base-change-to-$\BC$ version of
 Proposition \ref{vanlem2},
  we have the following analog of  Corollary \ref{vancor} (1) (and we leave the analog of (2) to the reader).

 \begin{prop}
 \label{refinecr} Assume that $\pi$ is cuspidal with $\pi_i=\Pi_{A_i,\iota_i}^{K_i}$  for $i=1,...,n$. 
      For $z\in  \Ch^*(X_{1...n,E})_{\BC}$,  
 $\delta_{\pi,*}z   =0$ if    $  \lb \otimes_{i=1}^n  \pr_{f_i}\rb_* z =0$ for all $f _i \in  \pi_i $. Here $\pr_{f_i}$ is defined in Definition \ref{prf}.\end{prop}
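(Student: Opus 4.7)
The plan is to adapt the proof of Corollary \ref{vancor} (1) to the finer $\pi$-isotypic projector by invoking the base-change-to-$\BC$ form of Proposition \ref{vanlem2} on each factor.

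First I would record the decomposition of each $\delta_{\pi_i}$. Since $M_{A_i}\otimes_\BQ\BC \cong \prod_{\iota\colon M_{A_i}\incl\BC}\BC$, the $A_i$-isotypic projector $\delta_{A_i}\otimes_\BQ\BC$ decomposes as $\sum_\iota \delta_{\Pi_{A_i,\iota}^{K_i}}$, and $\delta_{\pi_i}$ is one summand. Choosing a $\BC$-basis $\cB_i$ of $\pi_i$ with dual basis $\{\wt\phi\}_{\phi\in\cB_i}$ under the base-changed pairing \eqref{duality}, the $\BC$-coefficient analog of the second equality in \eqref{5terms} reads
$$\delta_{\pi_i} \;=\; \sum_{\phi\in\cB_i} c_{\wt\phi}^{\,t}\circ\pr_{f_\phi}\;\in\;\DC^1(X_i^2)_\BC,$$
where under \eqref{alb} (base-changed to $\BC$) each $f_\phi$ is an element of $\pi_i\subset\Hom^0(X_{K_i},A_i)_\BC$, and $c_{\wt\phi}\in\DC^1(X_i\times A_i^\vee)_\BC$ corresponds to $\wt\phi$ under the first isomorphism of Proposition \ref{trans} (1).

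Next I would tensor the $n$ factors. Expanding
$$\delta_\pi \;=\;\mathop\otimes_{i=1}^n \delta_{\pi_i}\;=\;\sum_{(\phi_1,\ldots,\phi_n)\in\cB_1\times\cdots\times\cB_n}\Bigl(\mathop\otimes_{i=1}^n c_{\wt\phi_i}^{\,t}\Bigr)\circ\Bigl(\mathop\otimes_{i=1}^n \pr_{f_{\phi_i}}\Bigr)$$
as a self-correspondence of $X_{1\ldots n}$ factoring through $A_1^\vee\times\cdots\times A_n^\vee$, and using that pushforward is functorial under composition of correspondences, one obtains
$$\delta_{\pi,*}\,z \;=\; \sum_{(\phi_1,\ldots,\phi_n)}\Bigl(\mathop\otimes_{i=1}^n c_{\wt\phi_i}^{\,t}\Bigr)_{\!*}\,\Bigl(\mathop\otimes_{i=1}^n \pr_{f_{\phi_i}}\Bigr)_{\!*}\,z.$$
Since every $f_{\phi_i}$ lies in $\pi_i$, the hypothesis forces each inner pushforward to vanish, and hence $\delta_{\pi,*}\,z=0$.

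The only real content to justify is the base-change-to-$\BC$ of Proposition \ref{vanlem2}, i.e.\ the verification that the $\BQ$-pairing \eqref{duality} on $\pi_{A_i}\times\wt\pi_{A_i}$ base-changes via $M_{A_i}\otimes\BC=\prod_\iota\BC$ to perfect $\BC$-pairings on each $\iota$-component and its dual; once this bookkeeping is done, the decomposition argument of Proposition \ref{vanlem2} goes through verbatim to produce the displayed expression for $\delta_{\pi_i}$. Any remaining obstacle is cosmetic and amounts to tracking base changes carefully.
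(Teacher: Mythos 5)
Your proof is correct and takes essentially the same route the paper intends: it spells out the base-change-to-$\BC$ version of Proposition \ref{vanlem2} (the second equality of \eqref{5terms}, restricted to the $\iota_i$-component of $M_{A_i}\otimes_\BQ\BC\cong\prod_\iota\BC$) and then pushes forward term by term exactly as in Corollary \ref{vancor}(1). The only (immaterial) slip is bookkeeping: with the expression $\delta_{\pi_i}=\sum_{\phi}c_{\wt\phi}^{\,t}\circ\pr_{f_\phi}$ one has $c_{\wt\phi}\in\DC^1(X_i\times A_i)_\BC$, so the composition factors through $A_1\times\cdots\times A_n$ rather than the duals.
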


 Now we fix an almost definite quaternion algebra $B$ over $F$, and consider  the $n$-th power  of  the Shimura curve $ X_K$, $K\subset B^\times(\BA_\rf)$.
 Let  $$\Ch(n)=\vpl_K  \Ch^*(X_{K}^n) _\BC,$$
 where  the transition maps in the inverse limit are pushforwrds   by the natural morphisms between Shimura curves. 
Let $ B^{\times,n}(\BA_\rf)$ act on $\ol\Ch(n)$ (from left) by   pullback.
  Let $\Pi=\boxtimes_{i=1}^n\Pi_{i}$ where  $\Pi_i$  is the trivial representation   of $B^\times(\BA_\rf)$
 or   of the form $\Pi_{A,\iota}$, as a representation of  $ B^{\times,n}(\BA_\rf)$. Let  $\Pi^K=\boxtimes_{i=1}^n\Pi_{i}^{K}$.   
 \begin{thm}\label{general}
 Let $ H\subset  B^{\times,n}(\BA_\rf)$ be a subgroup such that
$\Pi$
   has no nonzero  $H$-invariant linear forms. Then for $z=(z_K)_K\in\Ch(n) $ invariant by $H$, we have $ \delta_{\Pi^{K},*}z_K =0$ for all $K$.
\end{thm}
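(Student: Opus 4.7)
The strategy is to convert the desired vanishing into the non-existence of certain $H$-invariant linear forms on $\Pi$. Assume first that every $\Pi_i = \Pi_{A_i,\iota_i}$ is non-trivial, so that $\Pi^K$ is cuspidal in the sense of Theorem \ref{YZZ322}. Proposition \ref{refinecr} reduces $\delta_{\Pi^K,*} z_K = 0$ to showing that
\[
\bigl(\pr_{f_1} \otimes \cdots \otimes \pr_{f_n}\bigr)_* z_{K'} = 0 \quad \text{in } \Ch^*(A_1 \times \cdots \times A_n)_\BC
\]
for every $K' \subseteq K$ and every $f_i \in \Pi_i^{K'}$. Equivalently, one must show that the $\BC$-linear map
\[
\Phi_z \colon \Pi \lra \Ch^*(A_1 \times \cdots \times A_n)_\BC, \qquad f_1 \boxtimes \cdots \boxtimes f_n \longmapsto \bigl(\pr_{f_1} \otimes \cdots \otimes \pr_{f_n}\bigr)_* z_{K'}
\]
is identically zero; that this is well-defined independent of $K'$ will follow from the compatibility of the system $(z_K)_K$ with level-changing pushforwards combined with the transition compatibility of the Deninger--Murre projector $\pr$ (Corollary \ref{Deninger--Murre projectorscor}).

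The key step will be to show that $\Phi_z$ is $H$-equivariant for the trivial $H$-action on the target. For $h = (h_1,\ldots,h_n) \in H$, the pre-composition action $h \cdot f_i = f_i \circ \rho(h_i)$ yields, at a suitably refined level, the identity
\[
\pr_{h \cdot f_i} = \pr_{f_i} \circ [\Gamma_{\rho(h_i)}].
\]
Transferring the correspondences $[\Gamma_{\rho(h_i)}]$ onto $z$ rewrites $\Phi_z(h \cdot f)$ as $\bigl(\pr_{f_1} \otimes \cdots \otimes \pr_{f_n}\bigr)_*$ applied to $\bigl(\rho(h_1) \times \cdots \times \rho(h_n)\bigr)^* z_{K'}$, which equals $\Phi_z(f)$ precisely because $z$ is $H$-invariant in $\Ch(n)$.

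Once $H$-equivariance of $\Phi_z$ is in hand, composing with any $\BC$-linear functional on the target produces an $H$-invariant linear form on $\Pi$. By hypothesis all such forms vanish, and since linear functionals separate points of the Chow group with $\BC$-coefficients, we conclude $\Phi_z \equiv 0$ and hence $\delta_{\Pi^K,*} z_K = 0$ for every $K$. When some $\Pi_i$ is the trivial representation, the corresponding factor contributes the Hodge-class projector $\wh e_i = \delta_{\mathbbm 1_{K_i}}$ of \eqref{proj2}; this slot is tautologically $B^\times(\BA_\rf)$-invariant and imposes no further condition, and the argument on the remaining cuspidal factors goes through unchanged.

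The main obstacle is the $H$-equivariance of $\Phi_z$: the action of $h$ shifts the level on $\Pi$ (from $K'$ to a conjugate) while simultaneously acting by pullback on $\Ch(n)$, and reconciling these two pictures requires careful bookkeeping of the level-changing coverings between Shimura curves and the behavior of $\pr$ under them. This is precisely where the hypothesis that $z$ lies in the inverse limit $\Ch(n)$, rather than at a single level, becomes essential.
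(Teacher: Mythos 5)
Your proposal is correct and takes essentially the same route as the paper: one forms the $B^{\times,n}(\BA_\rf)$-equivariant map $f_1\otimes\cdots\otimes f_n\mapsto \lb \pr_{f_1}\otimes\cdots\otimes\pr_{f_n}\rb_*z$ into the Chow group of $A_1\times\cdots\times A_n$ with $\BC$-coefficients, kills it using the no-nonzero-$H$-invariant-forms hypothesis, and concludes by Proposition \ref{refinecr}. The only point where you are terser than the paper is the trivial-factor case, where the paper explicitly replaces $z_K$ by the classes $z'_K$ and $z_{K,*}e_K$ (which lie in $\Ch(n-1)$ and are invariant under the projection $H'$ of $H$) before rerunning the cuspidal argument; your ``goes through unchanged'' is exactly this reduction, stated without the bookkeeping.
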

\begin{proof} 

First, assume that $\Pi_n$ is the trivial representation. Let $z'_K$ be the pushforward  of $z_K$  by the projection to the product of  first  $(n-1)$ copies of $X_K$.  Let $H'  $ be the projection image of $H$ to 
 the product of  first  $(n-1)$ copies of $B^\times(\BA_\rf)$.
By \eqref{proj2}, we have $$    \delta_{\Pi_n^K,*}z_K =e_K\times z'_K+[X_K]\times z_{K,*}e_K ,$$
which is 0 if and only if 
$z'_K=0$ and $   z_{K,*}e_K=0$. 
It is easy to check that
$ (z'_K)_K, \lb z_{K,*}e_K\rb_K \in \ol\Ch(n-1) $, and they are $H'$-invariant. 
 Moreover, $\Pi$ 
   has no nonzero $H$-invariant linear form if and only if  $ \otimes_{i=1}^{n-1} \Pi_{i}$
    has no nonzero $H'$-invariant linear form.
 Thus  we only need to consider the case that  $\Pi_i=\Pi_{A_i ,\iota_i } $  for all $i=1,...,n$. 
 Now we assume this is the case.
  
  Consider  the     $\BC$-linear (the linearity on the first component follows from Definition \ref{prf}) map 
\begin{align*}  
  \boxtimes_{i=1}^n\Pi_{A_i,\iota_i}  
\times  \Ch(n)&\to \Ch_{1}(A_1\times...\times A_n) _{\BC},\\
  \lb \mathop\otimes_{i=1}^n  {f_i} , x \rb &\mapsto \lb \mathop\otimes_{i=1}^n  \pr_{f_i}\rb_* x  .
  \end{align*}
It is direct to check its
  $  B^{\times,n}(\BA_\rf)$-equivariance, i.e., for  $g=(g_1,...,g_n)\in B^{\times,n}(\BA_\rf)$
 $$   \lb \mathop\otimes_{i=1}^n g\cdot f \rb_* ( g^* x)= \lb \mathop\otimes_{i=1}^n  \pr_{f_i}\rb_* x .
 $$  
Then, evaluating it  at an $H$-invariant element $z\in  \Ch(n)$, we obtain an  $H$-invariant  $\BC$-linear  map 
\begin{equation}\label{YZZmap} \begin{split} \Pi=\mathop\boxtimes_{i=1}^n\Pi_{A_i,\iota_i} &\to  \Ch_{1}(A_1\times...\times A_n) _{\BC},
\\
 \mathop\otimes_{i=1}^n  {f_i} &\mapsto \lb \mathop\otimes_{i=1}^n  \pr_{f_i}\rb_* z  .
 \end{split}
 \end{equation}

The assumption on $\Pi$ forces this map to be 0.
 The theorem  follows from Proposition  \ref{refinecr}. 
\end{proof}
  \begin{rmk} \label{BBrmk}
 (1)  The map \eqref{YZZmap}  appeared in \cite{YZZ0}  for the diagonal 1-cycle, see the corollaries below.



 (2)      Theorem \ref{general}  is predicted by  the  conjectural injectivity of the Abel--Jacobi map.
   
 (3) By the proof of  Theorem  \ref{general},  only the cuspidal case is of real interest.  Note that  cuspidal representations $B^\times$ only appears in the $H^1$ of $X_K$.
 If $z$ is an 1-cycle,  then  the  conjectural injectivity of the Abel--Jacobi map shows that only the case $n=3$  is of real interest.   

\end{rmk}
 
\begin{cor}\label{generalcor}
 Let $\Delta_K $ be the diagonal embedding of $X_K$ in $ X_K^n$.
 Assume that  $\Pi$
   has no nonzero diagonal-$B^\times(\BA_\rf)$-invariant linear forms, then $    \delta_{\Pi^K,*} [\Delta_K]  =0.$
\end{cor}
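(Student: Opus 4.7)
The plan is to apply Theorem \ref{general} with $H$ equal to the image of the diagonal embedding $B^\times(\BA_\rf) \hookrightarrow B^{\times,n}(\BA_\rf)$, so that the hypothesis on $\Pi$ is exactly the one provided. What must be supplied is a compatible family $z = (z_K)_K \in \Ch(n)$ whose $K$-th component is a nonzero scalar multiple of $[\Delta_K]$ and which is fixed by the diagonal $H$.

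The first step is normalization: set $z_K := \vol(X_K)^{-1}[\Delta_K] \in \Ch^*(X_K^n)_{\BC}$. To see that $(z_K)_K$ defines an element of the inverse limit $\Ch(n)$, fix $K' \subset K$ and let $\pi: X_{K'} \to X_K$ be the natural projection, of degree $\vol(X_{K'})/\vol(X_K)$ by the behaviour of the Hodge class volume recalled in \S\ref{Shimura curves and Hecke correspondences}. The diagonal embeddings satisfy $\pi^n \circ \delta_{K'} = \delta_K \circ \pi$, so pushforward gives $\pi^n_*[\Delta_{K'}] = \delta_{K,*}\pi_*[X_{K'}] = \deg(\pi)\,[\Delta_K]$, and the chosen volume normalization is rigged precisely to cancel this degree factor: $\pi^n_* z_{K'} = z_K$.

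Next, I would verify diagonal $B^\times(\BA_\rf)$-invariance. For $g \in B^\times(\BA_\rf)$, the diagonal action is induced by the isomorphisms $\rho(g)^{\otimes n}: X_{gKg^{-1}}^n \to X_K^n$, which restrict to an isomorphism between the respective diagonals. Since $\rho(g)$ is an isomorphism of Shimura curves it preserves $\vol$, so $(\rho(g)^{\otimes n})^* z_K = z_{gKg^{-1}}$, i.e.\ $z$ is fixed by the diagonal subgroup.

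Applying Theorem \ref{general} then yields $\delta_{\Pi^K, *} z_K = 0$ for every $K$, and clearing the nonzero scalar $\vol(X_K)^{-1}$ produces $\delta_{\Pi^K, *}[\Delta_K] = 0$, as claimed. All the representation-theoretic content has already been absorbed into Theorem \ref{general}; the only thing to check here is the level-compatibility and invariance bookkeeping, which presents no serious obstacle.
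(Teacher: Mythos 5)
Your proposal is correct and follows the paper's own proof exactly: the paper likewise applies Theorem \ref{general} to $z_K=\vol(X_K)^{-1}[\Delta_K]$ with $H$ the diagonal copy of $B^\times(\BA_\rf)$, merely declaring the membership in $\Ch(n)$ and the $H$-invariance ``obvious.'' Your verification of the level-compatibility (via $\pi^n\circ\delta_{K'}=\delta_K\circ\pi$ and the volume normalization cancelling $\deg\pi$) and of the diagonal invariance is just the bookkeeping the paper leaves implicit.
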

\begin{proof}  In Theorem  \ref{general}, let $z_K= \frac{1}{\vol(X_K)} [\Delta_K] . $ Then $z=(z_K)_K\in\Ch(n) $   is obviously invariant under $H$, the diagonal embeding of $B^\times(\BA_\rf)$. The assertion follows.
\end{proof}

\subsection{Applications of Prasad's theorem on trilinear forms}
\label{Triple product diagonal cycle}
In this subsection, we prove Theorem  \ref
{thm:3} and \ref
{thm:4} in the introduction. 

Before that, we   recall Prasad's theorem  \cite{Pra,Pra1} about trilinear forms and  local root numbers for later use.
The reader may skip the next  lemma and theorem, and come back for  reference.
Let $k$ be a non-archimedean local field and $D$ the unique division quaternion algebra over $k$.
The following lemma is obvious.
\begin{lem}\label{lem:cc}Let $V_1,V_2,V_3$  be   irreducible    representations of $G=\GL_2(k),D^\times$.
  If   the product of the  central  characters  of  $V_1,V_2,V_3$ is not trivial, then  
   $$ \Hom_{G}\lb V_1\otimes V_2\otimes V_3,\BC\rb=0.$$

    \end{lem}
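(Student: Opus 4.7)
The plan is to reduce the vanishing to an observation about the action of the center. Let $Z \cong k^\times$ denote the center of $G$ (either $\GL_2(k)$ or $D^\times$). Since $G$ acts diagonally on $V_1 \otimes V_2 \otimes V_3$, so does $Z$, and by definition of the central characters $\omega_i$ of $V_i$, the element $z \in Z$ acts on a pure tensor $v_1 \otimes v_2 \otimes v_3$ by the scalar $\omega_1(z)\omega_2(z)\omega_3(z)$.

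The key step is then to evaluate any hypothetical $G$-equivariant linear form $\ell \colon V_1 \otimes V_2 \otimes V_3 \to \BC$ on $z \cdot v$ for $v \in V_1 \otimes V_2 \otimes V_3$ and $z \in Z$. On the one hand, $G$-invariance of $\ell$ (with $\BC$ as the trivial representation) forces $\ell(z \cdot v) = \ell(v)$. On the other hand, the computation above gives $\ell(z \cdot v) = (\omega_1 \omega_2 \omega_3)(z) \ell(v)$. Hence $\bigl(1 - (\omega_1\omega_2\omega_3)(z)\bigr)\ell(v) = 0$ for every $z \in Z$ and every $v$.

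If $\omega_1 \omega_2 \omega_3$ is nontrivial on $Z$, we can choose $z \in Z$ with $(\omega_1\omega_2\omega_3)(z) \neq 1$, forcing $\ell(v) = 0$ for all $v$, so $\ell = 0$. This gives the desired vanishing of $\Hom_G(V_1 \otimes V_2 \otimes V_3, \BC)$.

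There is no real obstacle here — the only thing to keep in mind is that the action on the tensor product is diagonal (not, e.g., via a comultiplication that might alter the central character), but this is automatic since we are working with the external tensor restricted along the diagonal $G \hookrightarrow G \times G \times G$. No irreducibility or admissibility hypothesis is used; the argument is purely about the center acting by a scalar.
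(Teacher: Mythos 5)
Your argument is correct and is precisely the standard central-character argument the paper has in mind when it declares this lemma ``obvious'' and omits a proof: a $G$-invariant form is in particular invariant under the center $Z\cong k^\times$, which acts on the tensor product through $\omega_1\omega_2\omega_3$, so a nontrivial product character forces the form to vanish. The only caveat worth noting is that irreducibility is what guarantees (via Schur's lemma) that each $V_i$ has a central character at all; once that is granted, your reasoning needs nothing more.
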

  
   The main results of  \cite{Pra,Pra1} are as follows

 \begin{thm}\label{thm:Pra}
 Let $V_1,V_2,V_3$  be irreducible  infinite-dimensional
   representation of $ \GL_2(k)$ such that
 the product of their central  characters is trivial. Then
 $$ \dim \Hom_{\GL_2(k)}\lb V_1\otimes V_2\otimes V_3,\BC\rb+  \dim \Hom_{D^\times}\lb \JL(V_1)\otimes \JL(V_2)\otimes \JL(V_3),\BC\rb=1.$$
 Here $\JL$ denote Jacquet--Langlands correspondence  from $ \GL_2(k)$  to $D^\times$, and 
 if $V_i$ is not a  discrete series, $\JL(V_1)$ is understood as $0$.
Moreover,   the triple product local root number  $\ep:=  \ep \lb V_1\otimes V_2\otimes V_3\rb$    is $\pm 1$.
And  $$ \dim \Hom_{\GL_2(k)}\lb V_1\otimes V_2\otimes V_3,\BC\rb=1\text{ if and only if }\ep=1.$$

\end{thm}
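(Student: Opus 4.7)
The plan is to prove this in three largely independent stages, corresponding to the three assertions: (i) the dichotomy $\dim\Hom_{\GL_2(k)}+\dim\Hom_{D^\times}=1$, (ii) the $\pm 1$-valuedness of $\ep$, and (iii) the identification of the nonzero side via the sign of $\ep$.

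For (i), I would first establish the upper bound $\dim\Hom_G(V_1\otimes V_2\otimes V_3,\BC)\le 1$ for each $G\in\{\GL_2(k),D^\times\}$ separately via Mackey theory. Rewriting $\Hom_G(V_1\otimes V_2\otimes V_3,\BC)=\Hom_G(V_1\otimes V_2, V_3^\vee)$ and applying Frobenius reciprocity in the principal series case reduces the count to intertwiners between Jacquet modules, which is at most one-dimensional by an analysis of the Bruhat decomposition of $\GL_2\times\GL_2$ under the diagonal $\GL_2$. For discrete series and supercuspidals, one uses Kirillov models. For $D^\times$, which is compact modulo center, all irreducible representations are finite-dimensional and Lemma~\ref{lem:cc} eliminates the central-character-incompatible case at once. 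For the existence direction, I would invoke the theta correspondence for the dual pair involving $\GSO_4$: a four-dimensional quadratic space of trivial discriminant over $k$ is either split (in which case $\GSO_4$ is isogenous to $\GL_2\times\GL_2$) or anisotropic (isogenous to $D^\times\times D^\times$), so the theta lift of a suitable vector in $V_1\otimes V_2$ lives on exactly one of these two groups. This produces the required trilinear form on either the $\GL_2(k)$ side or the $D^\times$ side but not both, yielding (i).

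For (ii), the $\pm 1$-valuedness of $\ep(V_1\otimes V_2\otimes V_3)$ follows from self-duality: the condition $\omega_1\omega_2\omega_3=1$ makes $V_1\otimes V_2\otimes V_3$ self-dual, so the local epsilon factor at the center $s=1/2$ satisfies $\ep^2=1$. For (iii), I would use the local zeta integral of Garrett and Piatetski-Shapiro--Rallis for triple product $L$-functions: the local functional equation expresses $\ep$ as a ratio of intertwiners, and the sign of this ratio matches the Hasse invariant of the four-dimensional quadratic space used in the theta construction from (i), tying the two together.

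The hard part will be the matching in stage three: one must compute the local epsilon factor explicitly in every representation type (principal series, Steinberg, supercuspidal, and all mixed combinations) and verify it agrees with the Hasse invariant of the quadratic space supporting the theta lift. This case-by-case computation—especially when one or more of the $V_i$ is supercuspidal—is the technical core of Prasad's original argument and cannot be avoided by purely abstract techniques.
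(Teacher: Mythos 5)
You should first be aware that the paper does not prove this statement: Theorem \ref{thm:Pra} is stated as a quotation of the main results of Prasad's papers \cite{Pra,Pra1} (the 1990 Compositio paper, which argues locally via Mackey theory and explicit epsilon-factor computations, and the 2007 Duke paper, which settles the remaining cases by global methods). So what you have sketched is a proof of Prasad's theorem itself. Your outline does point at the right circle of ideas --- Mackey theory for the multiplicity bound on the $\GL_2(k)$ side, the split versus anisotropic four-dimensional quadratic space behind the $\GL_2$/$D^\times$ dichotomy, and self-duality for $\ep=\pm 1$ (this last part is essentially fine once you add that $\omega_1\omega_2\omega_3=1$ forces the eight-dimensional tensor representation to have trivial determinant, indeed to be symplectic). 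But as a proof it has genuine gaps rather than merely unexecuted computations.

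Concretely: (a) your multiplicity-one claim on the $D^\times$ side is unsupported; finite-dimensionality gives nothing by itself, and Lemma \ref{lem:cc} is vacuous in the present situation because its hypothesis (nontrivial product of central characters) is exactly what the theorem excludes. The bound $\dim\Hom_{D^\times}\lb \JL(V_1)\otimes\JL(V_2)\otimes\JL(V_3),\BC\rb\leq 1$ is part of the content of the theorem and must be proved (Prasad obtains it through his analysis using the Jacquet--Langlands character identities). (b) The pivotal sentence that ``the theta lift of a suitable vector in $V_1\otimes V_2$ lives on exactly one of these two groups'' is not an input you can invoke: the local theta dichotomy for the two forms of the four-dimensional quadratic space of trivial discriminant is a theorem of the same depth as the statement being proved (Harris--Kudla; see also the global route of \cite{Pra1}), and its known proofs themselves run through see-saw identities plus epsilon-factor computations, so using it here without proof is circular. (c) For the identification of the nonvanishing side with $\ep=1$ you offer only ``the local functional equation expresses $\ep$ as a ratio of intertwiners''; in the actual proofs this step is either Prasad's long case-by-case local computation (with the supercuspidal and residue-characteristic-two cases causing real difficulty) or the globalization argument of \cite{Pra1} using the triple product $L$-function of Garrett and Piatetski-Shapiro--Rallis together with the theta identity; your sketch acknowledges the case analysis but supplies no mechanism that would carry it out. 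For the purposes of the present paper none of this is needed, since the theorem is imported as a black box; as a blind proof of the statement, however, the proposal stops short precisely at the points where the theorem is deep.
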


Now we start to prove our theorems.
Continues to use the   notations in the last subsection, the totally real field $F$, quaternion algebra $B/F$ and Shimura curve $X_K$ which we further abbreviate to $X$ in this subsection. 
Assume that $n=3$ and let $\Delta $ be the diagonal embedding of $X $ in $ X^3$.

 Let us first prove Theorem \ref
{thm:4}.

\begin{proof}
 [Proof of Theorem \ref
{thm:4}]

Assume that  $[\Delta]_{\otimes_{i=1}^3 h^{E_i}(X) }\neq 0$ for our $X$. 
Then $ \Pi_{E_i}^K\neq 0$ and  $\delta_{\Pi_{E_i}^K}=\delta_{E_i}\in \DC^1(X^2)_\BC$. 
Thus 
$      \delta_{\Pi^K,*}[\Delta]  \neq 0$ where $ \Pi=\Pi_{E_1,\BC}\otimes\Pi_{E_2,\BC}\otimes\Pi_{E_3,\BC}$.  
By Corollary \ref{generalcor},  $\Pi$ has   nonzero diagonal-$B^\times(\BA_\rf)$-invariant linear forms. By
Prasad's dichotomy   (Theorem  \ref{thm:Pra} (1)), such quaternion algebra is unique. 
  \end{proof}
More generally, let $A_i,i=1,2,3$ be  simple abelian varieties    over $F$. 
  Recall $M_{A_i}=\End(A_i)_\BQ$ acts on $\Pi_{A_i}$. 
 Theorem \ref
{thm:4} also holds for   $A_i,i=1,2,3$   after replacing ``at most one" by ``at most 
$[M_{A_1}:\BQ] [M_{A_2}:\BQ] [M_{A_3}:\BQ] $". The proof is the same after incorporating  the following lemma, and we leave the proof to the reader.

\begin{lem} \label{lem:AvsPi}The isotypic component $[\Delta]_{\otimes_{i=1}^3 h^{A_i}(X) }= 0$   if 
 $      \delta_{\Pi^K,*}[\Delta] = 0$, where 
  $\Pi=\otimes_{i=1}^3\Pi_{A_i,\iota_i}$, for every triple $\lb \iota_i:M_{A_i}\incl \BC,\, i=1,2,3\rb$.

\end{lem}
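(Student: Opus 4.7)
The plan is to decompose the rational projector $\delta_{A_i}$, base-changed to $\BC$, as a sum of the $\Pi_{A_i,\iota_i}^K$-isotypic projectors indexed by the complex embeddings $\iota_i \colon M_{A_i}\incl\BC$, and then run everything through the operator $(-)_*[\Delta]$.

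More precisely, first I would recall from \S\ref{Decomposition of the identity} and Proposition \ref{trans} that under the isomorphism $\DC^1(X^2)\cong \End(J_X)_\BQ$, the projector $\delta_{A_i}$ corresponds to the identity of the factor $M_{A_i}\otimes_\BQ \RM_{d_{A_i}}(\BQ)$ in the decomposition \eqref{eq:EndJ}. After base change to $\BC$, the factor $M_{A_i}\otimes_\BQ \BC$ splits as $\bigoplus_{\iota_i}\BC$ over the embeddings $\iota_i\colon M_{A_i}\incl\BC$, and each resulting $\RM_{d_{A_i}}(\BC)$-block corresponds, via Theorem \ref{YZZ322}(2) and the discussion preceding \eqref{proj1}, to the $\Pi_{A_i,\iota_i}^K$-isotypic component of $\End(J_X)_\BC$. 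Hence in $\DC^1(X^2)_\BC$,
\begin{equation*}
\delta_{A_i}\otimes_\BQ \BC \;=\; \sum_{\iota_i\colon M_{A_i}\incl\BC}\delta_{\Pi_{A_i,\iota_i}^K}.
\end{equation*}

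Next I would tensor three such identities together and apply $(-)_*[\Delta]\in \Ch^2(X^3)_\BC$. This gives
\begin{equation*}
\bigl(\delta_{A_1}\otimes \delta_{A_2}\otimes \delta_{A_3}\bigr)_{\!*}[\Delta]\otimes_\BQ\BC
\;=\; \sum_{(\iota_1,\iota_2,\iota_3)}\delta_{\Pi^K,*}[\Delta],
\end{equation*}
where $\Pi=\otimes_{i=1}^3\Pi_{A_i,\iota_i}$. By hypothesis every term on the right vanishes, so the left-hand side vanishes in $\Ch^2(X^3)_\BC$. Since $[\Delta]_{\otimes_{i=1}^3 h^{A_i}(X)} = (\delta_{A_1}\otimes\delta_{A_2}\otimes\delta_{A_3})_*[\Delta]$ by definition \eqref{eq:mot Del} and lives in $\Ch^2(X^3)_\BQ$, whose inclusion into $\Ch^2(X^3)_\BC$ is injective (base change along $\BQ\to\BC$ is faithfully flat for $\BQ$-vector spaces), we conclude $[\Delta]_{\otimes_{i=1}^3 h^{A_i}(X)} = 0$.

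The only nontrivial point in this plan is the base-change decomposition of $\delta_{A_i}$ in step one, which amounts to identifying the rational projector cut out by a simple factor of $\End(J_X)_\BQ$ with the sum of its $\BC$-refinements; this is essentially bookkeeping via Theorem \ref{YZZ322} and the semisimplicity provided by Proposition \ref{Heckimage}(1). Everything else is a formal consequence of linearity of pushforward and the fact that the motivic diagonal is rational.
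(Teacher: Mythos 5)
Your proposal is correct and is essentially the paper's own argument: the paper's proof consists precisely of the identity $\delta_{A_i}=\sum_{\iota:M_{A_i}\incl \BC}\delta_{\Pi^K_{A_i,\iota}}$ in $\DC^1(X^2)_\BC$, from which the lemma follows by tensoring and linearity of pushforward, exactly as you spell out. Your additional remarks (the block decomposition of $M_{A_i}\otimes_\BQ\BC$ and the injectivity of $\Ch^2(X^3)_\BQ\incl\Ch^2(X^3)_\BC$) are just the details the paper leaves implicit.
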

\begin{proof} By definition,  $\delta_{A_i}=\sum\limits_{\iota:M_{A_i}\incl \BC} \delta_{\Pi^K_{A_i,\iota}}\in \DC^1(X^2)_\BC$. The lemma follows.\end{proof}

Before we proceed, we need some preparations.

For a quaternion algebra $D$ over a number field $E$ and  a place $v$ of $E$,  the Hasse invariant $\ep(D_v)$ is 1 if $D_v$  is a matrix algebra, $-1$ if $D_v$ is division. The following theorem is  well known.
\begin{thm} [Hasse principle  for quaternion algebras]

 (1) The product over all places  $\prod_v \ep(D_v)=1$.
 
 (2) Given $\ep_v\in \{\pm 1\}$ for all places $v$ of $E$ such that $\ep_v=1$  for all  but finitely many places of $E$ and  $\prod_v \ep_v=1$.
 Then there is a quaternion algebra $D$ over a number field $E$ such that $\ep(D_v)=\ep_v$ for all $v$.
 \end{thm}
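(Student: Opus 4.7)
My plan is to deduce both parts from the Albert--Brauer--Hasse--Noether (ABHN) fundamental exact sequence of global class field theory,
\begin{equation*}
0 \to \Br(E) \to \bigoplus_v \Br(E_v) \xrightarrow{\sum_v \inv_v} \BQ/\BZ \to 0,
\end{equation*}
together with the fact that the local invariant map $\inv_v\colon \Br(E_v) \into \BQ/\BZ$ sends $[D_v]$ to $0$ if $D_v$ is split and to $1/2$ if $D_v$ is division. In particular $\ep(D_v)$ is recovered from $\inv_v[D_v]$ by the rule $\ep(D_v)=(-1)^{2\inv_v[D_v]}$.

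For (1), I apply the exactness of ABHN at the middle term to the class $[D]\in \Br(E)$: since $[D]$ maps to the family $([D_v])_v$ in $\bigoplus_v \Br(E_v)$, its further image under $\sum_v \inv_v$ vanishes. So $\sum_v \inv_v[D_v] = 0$ in $\BQ/\BZ$, and since each summand lies in $\{0,1/2\}$ and only finitely many are nonzero, the number of places with $\ep(D_v)=-1$ is even, giving $\prod_v \ep(D_v)=1$.

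For (2), set $\alpha_v = 0$ if $\ep_v = 1$ and $\alpha_v = 1/2$ otherwise. The hypothesis $\prod_v \ep_v = 1$ together with the fact that only finitely many $\alpha_v$ are nonzero translates to $\sum_v \alpha_v = 0$ in $\BQ/\BZ$, so by surjectivity in ABHN there is a class $[A]\in \Br(E)$ with $\inv_v[A]=\alpha_v$ for all $v$. Every local invariant is 2-torsion, hence $[A]$ is 2-torsion in $\Br(E)$; invoking Merkurjev's theorem (or, more classically over a number field, the theorem that every exponent-2 Brauer class over $E$ is split by a biquadratic extension and hence equivalent to a quaternion algebra), choose a quaternion algebra $D/E$ with $[D] = [A]$. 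Then $\inv_v[D_v] = \alpha_v$ for every $v$, equivalently $\ep(D_v)=\ep_v$.

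The main obstacle is conceptual rather than technical: both parts rest on the deep global reciprocity statement packaged in ABHN, which is essentially equivalent to global class field theory. If one wanted a self-contained proof avoiding CFT, one could identify $\ep(D_v)$ with the Hilbert symbol $(a,b)_v$ for a presentation $D=(a,b)_E$ and then (1) becomes Hilbert's product formula $\prod_v (a,b)_v = 1$ (deducible from quadratic reciprocity together with the supplementary laws), while (2) becomes a weak-approximation / Dirichlet-type construction producing $a,b\in E^\times$ with prescribed local Hilbert symbols --- a concrete but somewhat lengthy exercise. Given that the paper merely states the result as ``well known,'' I would simply cite a standard reference (e.g.\ Vign\'eras or Neukirch) rather than reproduce either argument.
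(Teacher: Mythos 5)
Your proof is correct. Note that the paper offers no argument of its own here --- it simply records the statement as ``well known'' --- so there is nothing to compare against; your derivation from the Albert--Brauer--Hasse--Noether sequence is the standard one and would serve as a citation-level justification. Two small refinements: invoking Merkurjev's theorem in (2) is more than is needed, since over a global field the exact sequence itself (period equals index) already guarantees that a $2$-torsion class in $\Br(E)$ is represented by a quaternion algebra; and your argument quietly shows that the statement of (2) needs the implicit proviso $\ep_v=1$ at complex places (where $\Br(E_v)=0$), a point the paper also glosses over but which is harmless in its application, since $E$ there is totally real.
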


 Now we come back to   the  simple abelian varieties  $A_i,i=1,2,3$   over the totally real field $F$. 
 Let    $\Pi=\otimes_{i=1}^3\Pi_{A_i,\iota_i}$, for some triple $\lb \iota_i:M_{A_i}\incl \BC,\, i=1,2,3\rb$.
 Let $\ep(\Pi_v)$ be the local root number of the Jacquet--Langlands correspondence of $\Pi_v$.  
\begin{prop}\label{general3}  
  If  $\ep(\Pi_v)\ep(B_v)\neq 1$ for some finite place $v$, then 
$      \delta_{\Pi^K,*}[\Delta]  =0$. 
\end{prop}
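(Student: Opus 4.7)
The plan is to deduce this from Corollary \ref{generalcor}: it suffices to show that $\Pi$ admits no nonzero diagonal-$B^\times(\BA_\rf)$-invariant linear form. I would begin by invoking factorizability. Since $\Pi=\bigotimes_v \Pi_v$ is a restricted tensor product of irreducible admissible local representations over the finite places of $F$, the diagonal-invariant global Hom space splits as a restricted tensor product of local ones:
\begin{equation*}
\Hom_{B^\times(\BA_\rf)}(\Pi,\BC) \cong \bigotimes_v \Hom_{B^\times(F_v)}(\Pi_v,\BC).
\end{equation*}
Consequently it is enough to exhibit a single finite place where the local factor vanishes, and the hypothesis presents us with exactly such a $v$.

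At the distinguished place $v$ I would split into two subcases. If the product of the local central characters of the three factors $\Pi_{A_i,\iota_i,v}$ is nontrivial at $v$, Lemma \ref{lem:cc} immediately yields $\Hom_{B^\times(F_v)}(\Pi_v,\BC)=0$. Otherwise, transferring via the Jacquet--Langlands correspondence places us in the hypotheses of Theorem \ref{thm:Pra}: by Theorem \ref{YZZ322}, each $\Pi_{A_i,\iota_i}$ is a finite component of a cuspidal holomorphic weight-two automorphic representation of $B^\times$, so its JL image on $\GL_2(F_v)$ is irreducible and infinite-dimensional. Prasad's theorem then yields the dichotomy that the relevant local Hom (on $\GL_2(F_v)$ when $B_v$ is split, on $D_v^\times$ when $B_v$ is division) is nonzero iff $\ep(\Pi_v)=+1$ in the split case and iff $\ep(\Pi_v)=-1$ in the division case. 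Writing $\ep(B_v)=+1$ or $-1$ according as $B_v$ is split or division, the two subcases unify to
\begin{equation*}
\Hom_{B^\times(F_v)}(\Pi_v,\BC) \neq 0 \Longleftrightarrow \ep(\Pi_v)\,\ep(B_v)=1,
\end{equation*}
so the hypothesis $\ep(\Pi_v)\ep(B_v)\neq 1$ forces local vanishing, completing the argument.

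I do not anticipate a serious obstacle: the heart of the argument is simply combining factorizability of the global Hom space with Prasad's local dichotomy. The minor bookkeeping point to verify is that the JL correspondent of each $\Pi_{A_i,\iota_i,v}$ is infinite-dimensional on $\GL_2(F_v)$, so that Prasad's theorem applies as stated; this is guaranteed by the cuspidality part of Theorem \ref{YZZ322}. The unified formulation $\ep(\Pi_v)\ep(B_v)=1$ is the conceptually clean way to phrase Prasad's dichotomy that makes the conclusion transparent and decouples the split and division cases into a single identity.
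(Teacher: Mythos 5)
Your proposal is correct and follows essentially the same route as the paper: reduce to Corollary \ref{generalcor}, split on whether the product of central characters at $v$ is trivial, use Lemma \ref{lem:cc} in the nontrivial case, and use Prasad's dichotomy (Theorem \ref{thm:Pra}) relating the local trilinear form to $\ep(\Pi_v)$ and $\ep(B_v)$ in the trivial case. Your explicit factorizability step for the global invariant-form space is left implicit in the paper's proof but is the same underlying argument.
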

\begin{proof}
If the restriction of the central character of $\Pi_v$ to the diagonal $F_v^\times$ is non-trivial, 
  the assumption in  Corollary \ref {generalcor} holds by Lemma \ref{lem:cc}.  The proposition follows.
Otherwise,  we must have $\ep(\Pi_v)\in \{\pm1\}$. If  $\ep(\Pi_v)\ep(B_v)=- 1$ for some finite place $v$, by    Prasad's theorem  \cite{Pra,Pra1} relating trilinear forms and  local root numbers, we have $\Hom_{H}(\Pi_v,\BC)=0$, where $H$ is the   diagonal$B_v^\times$.   The corollary now follows from   Corollary \ref {generalcor}. 
 \end{proof} 
  \begin{rmk} 
  
Proposition \ref{general3}
  can be used to pin down the unique (if it exists) quaternion in Theorem \ref
{thm:4}.
  \end{rmk}
  
Before we prove Theorem \ref
{thm:3},  we need the holomorphy of the triple product $L$-function. 
  \begin{thm} [\cite{Gar,PSR}] \label{PSR}For three cuspidal automorphic representation $\sigma_i,i=1,2,3$ of $\GL_{2,F}$,
  the L-function $L\lb s,\sigma_1\otimes \sigma_2\otimes \sigma_3\rb$ has holomorphic continuation to $s\in\BC$.  
  \end{thm}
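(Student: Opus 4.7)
The plan is to follow the Rankin--Selberg integral representation of Garrett and Piatetski-Shapiro--Rallis. Embed $H=\GL_{2}\times\GL_{2}\times\GL_{2}$ diagonally into a larger reductive group $G$ (conventionally $\GSp_{6}$, or $\Sp_{6}$ after twisting out the central characters), and form the Siegel Eisenstein series $E(g,s,\Phi)$ on $G$ from a carefully chosen section $\Phi$ in the degenerate principal series induced from the Siegel parabolic $P$. For cusp forms $\phi_{i}\in\sigma_{i}$, consider the global zeta integral
\[
Z(s;\phi_{1},\phi_{2},\phi_{3},\Phi)\;=\;\int_{[H]}E(\iota(g_{1},g_{2},g_{3}),s,\Phi)\,\phi_{1}(g_{1})\phi_{2}(g_{2})\phi_{3}(g_{3})\,dg.
\]
Using the orbit decomposition of $H$ acting on $P\backslash G$, this integral unfolds to an Euler product, and the unramified local calculation identifies the unramified part with $L^{S}(s+\tfrac{1}{2},\sigma_{1}\otimes\sigma_{2}\otimes\sigma_{3})$ divided by the normalizing factor of the Eisenstein series (a product of standard abelian $L$-functions appearing in the intertwining operators).

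Next, invoke Langlands' general theory of Eisenstein series to obtain meromorphic continuation of $E(g,s,\Phi)$ to all $s\in\BC$, with only finitely many poles in any vertical strip and residues expressible through constant terms along proper parabolic subgroups of $G$. Because each $\phi_{i}$ is cuspidal, its integral against any such residual contribution vanishes, so $Z(s;\phi_{1},\phi_{2},\phi_{3},\Phi)$ is entire in $s$. Dividing by the normalizing $L$-factors (shifted Hecke $L$-functions whose poles/zeros are classical) one recovers the meromorphic continuation of $L(s,\sigma_{1}\otimes\sigma_{2}\otimes\sigma_{3})$; entirety of $Z$, together with the nonvanishing of the ratio at any prescribed $s_{0}$ (achievable by judicious choice of $\Phi$ and of the $\phi_{i}$), yields holomorphy of the global $L$-function rather than mere meromorphy.

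The main obstacle is local: for each $s_{0}\in\BC$ one must exhibit a section $\Phi$ (especially at the archimedean and ramified places) such that the ratio of the local zeta integrals to the normalizing local $L$-factors is simultaneously nonzero and holomorphic in a neighbourhood of $s_{0}$. This is the technical heart of \cite{Gar,PSR}: one uses the flexibility of ``good sections'' in the degenerate principal series to decouple the local obstructions from the global statement, after which global holomorphy is an immediate consequence of the identity above combined with cuspidality.
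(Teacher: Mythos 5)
The paper itself offers no proof of this statement: it is quoted verbatim from Garrett and Piatetski-Shapiro--Rallis, so the only meaningful comparison is with the argument of those references. Your outline does reproduce their framework correctly in broad strokes (Siegel Eisenstein series on $\GSp_6$, restriction to the triple-diagonally embedded $\GL_2\times\GL_2\times\GL_2$, unfolding, the unramified computation producing $L^S$ divided by the normalizing factor, and the local problem of good sections at ramified and archimedean places).

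However, there is a genuine gap at the decisive analytic step. You assert that, because the $\phi_i$ are cuspidal, their integral against ``any residual contribution'' of $E(g,s,\Phi)$ vanishes, hence $Z(s)$ is entire. Orthogonality of residues of Eisenstein series to cusp forms is a statement about integration over $[G]$; your integral is over the period subgroup $[H]$ with $H\subset\GL_2^3$, and the restriction to $[H]$ of a residue of the Siegel Eisenstein series is not orthogonal to $\phi_1\otimes\phi_2\otimes\phi_3$ for any formal reason. Only the pole whose residue is essentially the constant function is killed by cuspidality; the remaining poles of the (normalized) Eisenstein series have non-constant, theta-like residues whose $H$-periods can be nonzero. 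In fact they are nonzero exactly when the completed triple product $L$-function genuinely has a pole: for dihedral triples $\sigma_i$ automorphically induced from characters $\chi_i$ of a quadratic extension $E/F$ with $\chi_1\chi_2\chi_3=1$, one has $L\lb s,\sigma_1\otimes\sigma_2\otimes\sigma_3\rb=\zeta_F(s)L(s,\eta_{E/F})\cdot(\text{three more abelian factors})$, which has a pole at the edge. So an argument deducing entireness for arbitrary cuspidal triples from cuspidality alone proves too much; the statement requires a hypothesis excluding these configurations (e.g.\ trivial product of central characters, or, as in the paper's application, three representations of parallel weight two, for which the archimedean types rule out such triples). The actual proofs in \cite{Gar,PSR}, completed by Ikeda's determination of the poles, must locate the poles of the normalized Eisenstein series and decide when the corresponding $H$-periods vanish, in addition to the local good-section issues you correctly flag.
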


  \begin{cor}  \label{holom}The L-function $L\lb s,h^1(A_1)\otimes h^1(A_2)\otimes h^1(A_3)\rb$ has holomorphic continuation to $s\in\BC$.  
  \end{cor}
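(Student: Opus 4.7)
The plan is to factor the L-function into a finite product of triple-product automorphic L-functions for $\GL_{2,F}$ and then invoke Theorem \ref{PSR}.

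Fix the quaternionic Shimura curve $X = X_K$ in whose Jacobian each $A_i$ appears as a simple isogeny factor, so that $\Pi_{A_i} \ne 0$. By Theorem \ref{YZZ322}, $M_{A_i} = \End(A_i)_\BQ$ is a number field with $[M_{A_i}:\BQ] = \dim A_i$, and for each embedding $\iota: M_{A_i} \incl \BC$ the space $\Pi_{A_i,\iota}$ is the finite component of a cuspidal holomorphic weight-$2$ automorphic representation of $B^\times$. Let $\sigma_{A_i,\iota}$ denote its Jacquet--Langlands transfer, which is a cuspidal automorphic representation of $\GL_{2,F}$.

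Next I would invoke Eichler--Shimura theory for quaternionic Shimura curves to identify the $2$-dimensional pieces of the $M_{A_i} \otimes \BQ_\ell$-module $H^1_\et(A_{i,\ol F}, \BQ_\ell)$ with the $\ell$-adic Galois representations attached to the various $\sigma_{A_i,\iota}$. This yields
\[
L(s, h^1(A_i)) = \prod_{\iota: M_{A_i} \incl \BC} L(s, \sigma_{A_i,\iota}),
\]
and, by multiplicativity of L-functions under tensor products of Galois representations,
\[
L(s, h^1(A_1) \otimes h^1(A_2) \otimes h^1(A_3)) = \prod_{(\iota_1,\iota_2,\iota_3)} L(s, \sigma_{A_1,\iota_1} \otimes \sigma_{A_2,\iota_2} \otimes \sigma_{A_3,\iota_3}),
\]
the product ranging over triples of complex embeddings of the $M_{A_i}$.

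By Theorem \ref{PSR}, each factor on the right admits holomorphic continuation to $\BC$, and a finite product of entire functions is entire. The technical point most deserving of care is the identification of the motivic factorization of $L(s, h^1(A_i))$ with the automorphic factorization indexed by the complex embeddings of $M_{A_i}$: this is essentially classical Eichler--Shimura theory for Shimura curves (in the form due to Ohta and Carayol) combined with the $M_{A_i}$-action on $A_i$, but the bookkeeping with $\ell$-adic places of $M_{A_i}$ versus embeddings into $\BC$ must be arranged carefully.
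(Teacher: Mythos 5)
Your proposal is correct and is essentially the paper's own (implicit) argument: the paper deduces Corollary \ref{holom} from Theorem \ref{PSR} via exactly this factorization of $L\lb s,h^1(A_1)\otimes h^1(A_2)\otimes h^1(A_3)\rb$ into the finite product of triple product $L$-functions $L\lb s,\sigma_{A_1,\iota_1}\otimes\sigma_{A_2,\iota_2}\otimes\sigma_{A_3,\iota_3}\rb$ indexed by triples of embeddings of the fields $M_{A_i}$, as is also visible in the proof of Theorem \ref{thm:3}. The only cosmetic point is the normalization shift between the motivic variable and the unitary automorphic variable (e.g.\ $s=2$ versus $s=1/2$), which does not affect holomorphy and which the paper likewise suppresses.
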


\begin{proof}
 [Proof of Theorem \ref
{thm:3}]    
We may take  $X$ and $A_i$'s  as the ones we use above.
 Assume $[\Delta]_{h^{A_1}(X)\otimes h^{A_2}(X)\otimes h^{A_3}(X)}\neq 0,$  
we want to prove that $$L\lb 2,h^1(A_1)\otimes h^1(A_2)\otimes h^1(A_3)\rb =0. $$

By Lemma \ref{lem:AvsPi},
 for some  $\Pi=\otimes_{i=1}^3\Pi_{A_i,\iota_i}$, $      \delta_{\Pi^K,*}[\Delta]  \neq0$.  
 Let $\pi$ be the corresponding triple   product of  cuspidal automorphic representations of $\GL_{2,F}$  that are  holomorphic of weight 2 via the Jacquet--Langlands correspondence, as in Theorem \ref{YZZ322} (2). 
   By  Corollary \ref {generalcor},
  the restriction of the central character of $\pi_v$ to the diagonal $F_v^\times$ is  trivial for every finite place $v$ of $F$. By Proposition \ref{general3}, $\ep(\pi_v)\ep(B_v)=1$.
   At  every infinite place $v$,   $\ep(\pi_v)=-1$ (see \cite[Section 9]{Pra}).    At  every infinite place $v\neq \tau$, $\ep(B_v)=-1$ and $\ep(B_\tau)=1$. 
 Thus by the Hasse principle,
  $\ep(\pi)=\prod_v \ep(\pi_v)=-1$. So $L(1/2,\pi)=0$ and thus    $L\lb 2,h^1(A_1)\otimes h^1(A_2)\otimes h^1(A_3)\rb =0$ as it contains   $L(1/2,\pi)$ as a multiplicative factor (while the other factors are finite by Theorem \ref{PSR}).
This is a contradiction.
 \end{proof}

 \subsection{Proof of Theorem \ref{thm:2}}\label{ss:thm2}
Below, we prove Theorem \ref{thm:2} by proving  a  more general  result (Corollary \ref{cor:2+}).
However, for now we put the non-hyperellipticity aside,  and focus on the vanishing of components of diagonal cycles.

\begin{thm}
 \label
{thm:2+}
Let $B$ be an almost definite    quaternion algebra  over a totally real field $F$.
Let $A_i,i=1,2,3$ be  simple abelian varieties    over $F$ such that $\leftidx{_B}\Pi_{A_i}\neq 0$. 
 Assume that    for some finite place $v$ of $F$, $\leftidx{_B}\Pi_{A_i,v}$    is a discrete series for $i=1,2,3$. 
  Then there are a finite totally  real   extension $E/F$ and an almost definite    quaternion algebra $D $ over $E$ 
  such that  \begin{itemize}
  \item[(1)]  the base change   abelian variety $A_{i,E}$ is simple,
  \item[(2)]   we have $\leftidx{_D}\Pi _{A_{i,E}}\neq 0$,
 \item[(3)] 
 for any open compact subgroup  $L\subset D^\times(\BA_{E,\rf})$,  the diagonal embedding $\Delta_L$ of $X_L$ in $X_L^3$ satisfies 
$$[\Delta_L]_{{h^{A_{1,E}}(X_L)\otimes h^{A_{2,E}}(X_L)\otimes h^{A_{3,E}}(X_L)}}= 0.$$
 \end{itemize}

  \end{thm}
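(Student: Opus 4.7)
The plan is to construct $(E,D)$ so that Proposition \ref{general3} combined with Corollary \ref{generalcor} and Lemma \ref{lem:AvsPi} yields the vanishing in (3). All the required inputs are at hand: base change preserves the modular structure of the $A_i$, Jacquet--Langlands transfers representations between quaternion algebras, and Prasad's theorem (Theorem \ref{thm:Pra}) reduces the triviality of the relevant trilinear form to a local root number mismatch. The construction therefore reduces to bookkeeping of local root numbers together with an application of Hasse's theorem for quaternion algebras.

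First I would choose a finite totally real extension $E/F$ satisfying: (a) each $A_{i,E}$ is $E$-simple, achievable over a sufficiently large totally real extension since all geometric endomorphisms of the $A_i$ are defined over a number field; (b) there is a finite place $w_0$ of $E$ above $v$ such that each base change $\pi_{i,E,w_0}$ of the Jacquet--Langlands lift $\pi_i$ of $\leftidx{_B}\Pi_{A_i}$ remains discrete series, for instance by arranging $E_{w_0}/F_v$ to have odd residue degree; (c) one additional finite place $w_1$ of $E$ at which each $\pi_{i,E,w_1}$ is a discrete series, to be used as a free parity parameter. Standard density arguments (Chebotarev combined with Arthur--Clozel base change) make such an $E$ available. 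Next, via Hasse's theorem, I would construct an almost definite quaternion algebra $D/E$ with prescribed local invariants $\ep(D_w)$: at archimedean places set $\ep(D_\tau)=+1$ at one chosen $\tau$ and $\ep(D_w)=-1$ at the other archimedean places; at $w_0$ set $\ep(D_{w_0})=-\ep(\Pi_{w_0})$ for $\Pi=\bigotimes_{i=1}^3 \leftidx{_D}\Pi_{A_{i,E},\iota_i}$, forcing the local mismatch $\ep(\Pi_{w_0})\ep(D_{w_0})=-1$ required by Proposition \ref{general3}; at $w_1$ use $\ep(D_{w_1})$ to absorb whatever parity defect remains so that $\prod_w\ep(D_w)=1$; split $D$ at every remaining finite place. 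Since $D$ is division only at places where all three $\pi_{i,E,w}$ are discrete series, the Jacquet--Langlands correspondence guarantees $\leftidx{_D}\Pi_{A_{i,E}}\neq 0$, proving (1) and (2).

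For any open compact $L\subset D^\times(\BA_{E,\rf})$ and any triple of embeddings $\iota_i\colon M_{A_{i,E}}\hookrightarrow\BC$, Proposition \ref{general3} applied at $w_0$ yields $\delta_{\Pi^L,*}[\Delta_L]=0$; one uses here that $\ep(\Pi_{w_0})\in\{\pm 1\}$ is an invariant of the local isomorphism class and so independent of the choices $\iota_i$. Summing over embeddings via Lemma \ref{lem:AvsPi} then gives (3). The main obstacle is item (b) in the choice of $E$: guaranteeing a place $w_0$ of $E$ above $v$ where all three base-changed $\pi_{i,E,w_0}$ are simultaneously discrete series. For Steinberg local components at $v$ this is automatic for any odd-degree local extension, but when some $\Pi_{A_i,v}$ is supercuspidal one must combine explicit local base change with Chebotarev density to produce $E$ with $E_{w_0}/F_v$ of the required type. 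Once $w_0$ and $w_1$ are secured, the root-number bookkeeping that defines $D$ is routine.
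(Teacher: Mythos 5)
Your overall strategy (force a local root-number mismatch at a place of $E$, invoke Proposition \ref{general3}, sum over embeddings via Lemma \ref{lem:AvsPi}, and balance the Hasse-principle parity with a spare ramified place) is the same skeleton as the paper's proof, but it has a genuine gap at its pivot: you assert that $\ep(\Pi_{w_0})$ "is an invariant of the local isomorphism class and so independent of the choices $\iota_i$." That is unjustified and in general false. Lemma \ref{lem:AvsPi} requires you to kill $\delta_{\Pi_k^L,*}[\Delta_L]$ for \emph{every} one of the $n=[M_{A_1}:\BQ][M_{A_2}:\BQ][M_{A_3}:\BQ]$ triples $(\iota_1,\iota_2,\iota_3)$, and the local components $\Pi_{A_i,\iota_i,v}$ are Galois conjugates of one another that need not be isomorphic (e.g.\ for supercuspidal components), with triple-product root numbers $\ep_k=\ep(\Pi_{k,v})$ that can differ from triple to triple; the three embeddings vary independently, so no simultaneous-conjugation symmetry saves you. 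Consequently a single sign $\ep(D_{w_0})$ cannot in general be mismatched against all $n$ root numbers, and your construction only kills the isotypic pieces whose $\ep_k$ happens to equal $\ep(\Pi_{w_0})$. The paper's remedy is structural: it splits $v$ \emph{completely} in a tower of totally real quadratic extensions (Lemma \ref{lem:successive quad extensions }), producing $m>n$ degree-one places $v_1,\dots,v_m$ above $v$ at which the local components are literally unchanged, and then (Lemma \ref{lem:nonhyp}) chooses $D$ ramified only inside $\{v_1,\dots,v_m\}$ with $\ep(D_{v_k})\neq\ep_k$ for each $k=1,\dots,n$, the surplus places absorbing the Hasse-principle parity. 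This also dissolves your "main obstacle" (b): with degree-one places there is no base-change subtlety about supercuspidals at all, so the "odd residue degree" maneuver is unnecessary.

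A secondary problem: your claim that simplicity of $A_{i,E}$ is "achievable over a sufficiently large totally real extension since all geometric endomorphisms are defined over a number field" runs in the wrong direction --- enlarging $E$ can only break simplicity, never create it, so what must be shown is that simplicity is \emph{preserved} for the specific $E$ you build. The paper proves this (Proposition \ref{ChoseE}) by showing the Hecke field does not shrink under the quadratic tower, $H(\pi_E)=H(\pi)$, via a Chebotarev argument on split places, and then identifying $A_{i,E}$ up to isogeny with the abelian variety attached to $\pi_{i,E}$ by Faltings' isogeny theorem; this step simultaneously yields $\leftidx{_D}\Pi_{A_{i,E}}\neq 0$ and the identification of the local components at the $v_k$ needed to quote Proposition \ref{general3}. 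Your proposal needs both repairs --- one mismatch place per embedding triple, and an actual argument for (1) and (2) --- to become a proof.
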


We need  some preparations. 
\begin{lem}\label{lem:successive quad extensions } 
 
  Let  $v_1, v_2,...,v_m$ be   finite places of a totally real field  $F$. There exists a totally real quadratic extension $E/F$ split at $v_1,...,v_m$.

\end{lem}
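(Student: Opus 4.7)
The plan is to construct the extension as $E = F(\sqrt{d})$ for a suitable $d \in F^\times$, chosen by weak approximation. Recall that a quadratic extension $F(\sqrt{d})/F$ is totally real precisely when $d$ is totally positive, and it splits at a finite place $v_i$ precisely when $d$ lies in $(F_{v_i}^\times)^2$. Since $(F_{v_i}^\times)^2$ is an open subgroup of $F_{v_i}^\times$, and $\BR_{>0}$ is open in $\BR$, the subset
\[
U := \prod_{\tau\colon F \incl \BR} \BR_{>0} \ \times\ \prod_{i=1}^{m} (F_{v_i}^\times)^2
\]
is open in $\prod_{\tau} \BR^\times \times \prod_{i} F_{v_i}^\times$.

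First, I would pick an auxiliary finite place $w$ of $F$, distinct from all $v_i$ and of odd residue characteristic; such a $w$ exists since there are infinitely many finite places. At $w$, a uniformizer $\varpi_w$ is a non-square in $F_w^\times$, and $\varpi_w \cdot (F_w^\times)^2$ is a non-empty open subset of $F_w^\times$ consisting entirely of non-squares. By weak approximation applied to the archimedean places, the $v_i$, and the place $w$, there exists $d \in F^\times$ that is simultaneously totally positive, a square in each $F_{v_i}$, and a non-square in $F_w$.

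Then $E := F(\sqrt{d})$ is a genuine quadratic extension (nontrivial because $d$ is not a square in $F_w$, hence not in $F$), is totally real (because $d$ is totally positive), and splits at every $v_i$ (because $d$ is a square in the completion). This is exactly the extension required.

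The only subtlety is ensuring the extension is nontrivial and really quadratic; that is handled by the auxiliary non-square condition at $w$. The argument is a standard weak approximation exercise with no serious obstacle, but one must remember to impose the non-square condition, otherwise weak approximation alone could produce a square $d$ and the claimed $E$ would collapse to $F$.
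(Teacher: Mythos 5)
Your proof is correct and follows essentially the same route as the paper: the paper also takes $E=F(\sqrt d)$ with $d\in F^\times$ produced by weak approximation, requiring $d$ totally positive, a square in each $F_{v_i}$, and a non-square at one auxiliary finite place to guarantee the extension is genuinely quadratic. Your explicit remark about the auxiliary non-square condition is exactly the point the paper handles with its extra place $v$.
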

\begin{proof}

Let $F_\infty=\prod_{u|\infty} F_u^\times$. Let $F_\infty^+\subset F_\infty^\times$ be the subgroup of totally positive elements. 
 In $\prod F_u^\times$ where $u$ runs over all infinite places  of $F$, $v_1, v_2,...,v_m$  and  $v$, embed $F^\times$ diagonally. By weak approximation, in the product $F_\infty^\times \times \prod_{i=1}^m F_{v_i}^{\times}\times F_v^\times$ with $F^\times$ embedded diagonally,  we have 
 $$F^\times\cap \lb F_\infty^+\times \prod_{i=1}^m F_{v_i}^{\times,2}\times (F_v^\times-F_{v}^{\times,2})\rb \neq \emptyset.$$
Let $E=F(\sqrt d)$ for some $d$ in this set. 
 \end{proof} 
 The following lemma is obvious by  the Hasse principle.
 \begin{lem}\label{lem:nonhyp} 
 
  Let  $v_1, v_2,...,v_{m}$ be different finite places of  a totally real field $E$,
  and $\ep_k\in \{\pm1\}$ for $k=1,...,n<m$.  There is an    almost definite   quaternion algebra  $D$ over $E$ such that 
 \begin{itemize}
   \item $D$ is  a matrix algebra at every finite place outside $\{v_1,v_2,...,v_{m}\}$, 
\item  $\ep \lb D_{v_k}\rb\neq\ep_k$ for $k=1,...,n $.
\end{itemize}

\end{lem}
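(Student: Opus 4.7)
The plan is to construct $D$ by prescribing its Hasse invariants at every place of $E$ and then invoking the Hasse principle for quaternion algebras (Theorem (2) in the excerpt). The constraints break into three groups: archimedean, the bad finite places $v_1,\dots,v_n$ that are restricted, and the auxiliary finite places $v_{n+1},\dots,v_m$ plus all other finite places that are free. The hypothesis $n<m$ is exactly what gives us at least one truly free finite slot to absorb the parity constraint coming from the product formula.

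Concretely, I would proceed as follows. First, pick any archimedean place $\tau$ of $E$ and declare $\epsilon(D_\tau)=+1$ while $\epsilon(D_u)=-1$ at each of the remaining $d-1$ archimedean places of $E$ (where $d=[E:\mathbb Q]$); this is the almost-definite requirement and contributes $(-1)^{d-1}$ to the global product. Second, at each $v_k$ with $1\le k\le n$, declare $\epsilon(D_{v_k})=-\epsilon_k$; this is forced by the second bullet in the conclusion. Third, at every finite place $v$ outside $\{v_1,\dots,v_m\}$, declare $\epsilon(D_v)=+1$; this is forced by the first bullet. Finally, at the finite places $v_{n+1},\dots,v_m$, which are unconstrained, I choose the invariants so that the global product equals $+1$: since $m>n$, the set $\{v_{n+1},\dots,v_m\}$ is nonempty, so I may set $\epsilon(D_{v_k})=+1$ for $k=n+1,\dots,m-1$ and then set
\[
\epsilon(D_{v_m})=(-1)^{d-1}\cdot\prod_{k=1}^n(-\epsilon_k),
\]
which is $\pm1$ and hence a legal Hasse invariant. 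By construction the collection $\{\epsilon(D_v)\}_v$ is trivial at all but finitely many $v$ and its global product is $+1$.

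By the Hasse principle (Theorem (2) of the excerpt), there is a quaternion algebra $D$ over $E$ realizing these invariants. By construction it is almost definite (ramified at every archimedean place except $\tau$), split at every finite place outside $\{v_1,\dots,v_m\}$, and satisfies $\epsilon(D_{v_k})=-\epsilon_k\neq\epsilon_k$ for $k=1,\dots,n$, which is exactly the desired conclusion.

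There is no real obstacle here; the one thing to double-check is the sign bookkeeping, and the only place it could fail is if $m=n$, in which case the archimedean and restricted finite invariants would already determine the global product and there would be no room to correct it. The hypothesis $n<m$ is precisely what rules this out, which is presumably why the authors state it explicitly rather than allowing $n\le m$.
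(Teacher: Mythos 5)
Your proof is correct and is exactly the intended argument: the paper simply declares the lemma ``obvious by the Hasse principle,'' and your write-up is the spelled-out version — prescribe the local invariants, use the spare place guaranteed by $n<m$ to fix the global parity, and invoke the existence half of the Hasse principle. Your closing remark about why $n<m$ is needed is also accurate.
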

 
 The choice of $E$ in the theorem will be given by the following proposition.
\begin{prop}\label{ChoseE}
 Fix a totally real field $F$, 
 a finite place $v$ of $F$,  a positive integer $ n$ and an almost definite quaternion algebra $B$ over  $F$. Then
there exists  a finite totally  real   extension $E/F$ with 
places $ v_1,...,v_{m},m>n,$  of $E$ of degree one over $v$ (i.e., $E_{u}\cong F_v$ for $u\in \{v_1,...,v_{m}\}$),
such that
\begin{enumerate} 
\item for every   simple abelian variety $A$ over  $F$ with $\Pi_A:=\leftidx{_B}\Pi_A\neq 0$ and $ \Pi_{A,v}$   being a discrete series,  we have that
$A_E$ is simple and the natural map $M_A\to M_{A_E}$ is an isomorphism,
 \item
  for every  almost definite  quaternion algebra $D $ over $E$  
 that is  a matrix algebra  locally at every finite place outside $\{v_1,v_2,...,v_{m}\}$,   
 we have that $ \Pi_{A_E}:=\leftidx{_D}\Pi_{A_E}\neq 0$ and  $\Pi_{A_E,\iota,u}$ is the Jacquet--Langlands correspondence of  $\Pi_{A,\iota,v}$   under
$E_{u}\cong F_v$ for all $\iota:M_A\cong M_{A_E}\incl \BC$ and  $u\in \{v_1,...,v_{m}\}$.
 \end{enumerate}
\end{prop}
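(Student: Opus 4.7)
The plan is to build $E$ as a tower of totally real quadratic extensions via Lemma \ref{lem:successive quad extensions}, and to deduce both (1) and (2) from cyclic base change for holomorphic Hilbert modular forms together with Theorem \ref{YZZ322}, which already transports every abelian variety in our class to an automorphic representation and back.

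Starting from $F$, iteratively apply Lemma \ref{lem:successive quad extensions}: at each step pass to a totally real quadratic extension split at every place of the current field above $v$. After $k$ steps the number of degree-one places above $v$ doubles to $2^k$, so choosing $k$ with $m := 2^k > n$ produces the desired $E/F$ together with places $v_1,\ldots,v_m$ of $E$ satisfying $E_{v_j} \cong F_v$.

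For (1), fix $A$ in the given class and an embedding $\iota : M_A \incl \BC$; Theorem \ref{YZZ322} attaches to $(A,\iota)$ a cuspidal, holomorphic weight-two automorphic representation $\sigma_{A,\iota}$ of $\GL_{2,F}$. The cyclic base change $\sigma_{A,\iota,E}$ is cuspidal: otherwise $\sigma_{A,\iota}$ would be dihedral, i.e.\ equal to $\AI_F^K\chi$ for some Hecke character $\chi$ of a quadratic $K$ with $K \subset E$, and the holomorphic weight-two condition at every infinite place of $F$ would force such a $K$ to be a CM field, contradicting the fact that $E$ (hence $K$) is totally real. Applying Theorem \ref{YZZ322} over $E$ to the cuspidal holomorphic weight-two representation $\sigma_{A,\iota,E}$ produces a simple abelian variety $B/E$ with $\End(B)_\BQ = M_B$ satisfying $[M_B:\BQ] = \dim B$, and $A_E$ is then isogenous to $B^r$ for some $r \geq 1$. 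One has $r\dim B = \dim A_E = \dim A = [M_A:\BQ]$, and the inclusion $M_A \incl \End(A_E)_\BQ = M_r(M_B)$ forces $M_A$, being commutative, into the center $M_B$, so $[M_A:\BQ] \leq [M_B:\BQ]$ and hence $r \leq 1$. Therefore $r = 1$, $A_E$ is simple and isogenous to $B$, and $M_A = M_B = M_{A_E}$.

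For (2), take any allowed $D/E$. To realize $\sigma_{A,\iota,E}$ on $D^\times(\BA_E)$ via Jacquet--Langlands it suffices that $\sigma_{A,\iota,E,u}$ be a discrete series at every place $u$ where $D_u$ is non-split. At non-split archimedean places this holds because $\sigma_{A,\iota,E}$ is holomorphic of weight two; at a non-split finite place $u$, necessarily $u \in \{v_1,\ldots,v_m\}$, so $E_u \cong F_v$, cyclic base change acts as the identity at $u$, and $\sigma_{A,\iota,E,u} = \sigma_{A,\iota,v}$ is a discrete series by hypothesis on $\Pi_{A,v}$. The transferred representation on $D^\times(\BA_E)$ is realised in $\leftidx{_D}\Pi_{A_E}$ by Theorem \ref{YZZ322} applied to the simple abelian variety $A_E$, and at each $v_j$ its local component is by construction the JL-transfer to $D_{v_j}^\times$ of $\sigma_{A,\iota,v}$, which is the JL-correspondent of $\leftidx{_B}\Pi_{A,\iota,v}$ on $B_v^\times$ via $E_{v_j} \cong F_v$. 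The main subtlety lies in ensuring cuspidality of $\sigma_{A,\iota,E}$ uniformly across the possibly infinitely many $A$ in the class, and this is precisely what the totally real hypothesis on $E$ accomplishes by eliminating every CM intermediate subfield at once.
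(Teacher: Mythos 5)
Your construction of $E$ (a tower of totally real quadratic extensions split above $v$, via Lemma \ref{lem:successive quad extensions }) and your cuspidality argument for the base change $\sigma_{A,\iota,E}$ match the paper's route, and part (2) would indeed follow once (1) is in place. The gap is in your proof of (1), at the step where you write that the inclusion $M_A\incl \End(A_E)_\BQ\cong \RM_r(M_B)$ ``forces $M_A$, being commutative, into the center $M_B$.'' That is false: a commutative subfield of a matrix algebra need not lie in the center (e.g.\ $\BQ(\sqrt2)\subset \RM_2(\BQ)$), and in fact $\RM_r(M_B)$ contains subfields of degree $r[M_B:\BQ]$ over $\BQ$ — which is exactly $[M_A:\BQ]$ in the scenario $A_E\sim B^r$ that you must exclude. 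So your dimension count gives no contradiction for $r>1$, and the conclusion $r=1$ (hence simplicity of $A_E$ and $M_A\cong M_{A_E}$) is not established. What has to be ruled out is precisely a drop of the Hecke field under base change, i.e.\ $H(\sigma_{A,\iota,E})\subsetneq H(\sigma_{A,\iota})$ (equivalently an inner-twist relation $\sigma_{A,\iota}^\sigma\cong\sigma_{A,\iota}\otimes\eta$ with $\eta$ cut out by an intermediate quadratic subfield of $E$), and nothing in your argument addresses this.

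This is the actual content of the paper's proof: it shows $H(\pi_E)=H(\pi)$ for the totally real quadratic tower, by passing to the $\lambda$-adic Galois representation attached to $\pi$ with $\lambda$ above an inert prime of $H(\pi)$ and applying Chebotarev to the Frobenius traces at places split in $E$; it then identifies $M_{A'}\cong H(\pi_E)$ for the simple abelian variety $A'$ attached (via Theorem \ref{YZZ322} over $E$) to the Jacquet--Langlands transfer of $\pi_E$, compares dimensions, and uses Faltings' isogeny theorem to conclude $A_E\sim A'$ is simple with $M_{A_E}\cong M_A$. A secondary omission in your write-up: the assertion ``$A_E$ is then isogenous to $B^r$'' is itself not free — it requires Faltings' theorem together with the observation that every constituent of $H^1(A_E)$ is an $\Aut(\BC)$-conjugate of $\sigma_{A,\iota,E}$, so that all simple factors of $A_E$ are isogenous to $B$; similarly, in (2) the identification of the abelian variety produced by Theorem \ref{YZZ322} over $E$ with $A_E$ again goes through Faltings. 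To repair the proof you should either reproduce the Hecke-field equality argument (or otherwise exclude inner twists by totally even quadratic characters for these weight-two representations) and then keep your dimension count with $[M_A:\BQ]=[M_B:\BQ]$, which does force $r=1$.
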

 
\begin{proof}

Let  $ m=2^r>n$ be a power of $2$. Let $E_1/F$ be a totally real quadratic extension   split at $v$, and $E_2/E_1$ 
 a totally real quadratic extension $E/F$ split at places over $v$ and so on until we get $E:=E_r$ such that $v$ is completely split along the extension $E/F$. The existence of $E_1,...,E_r$ is guaranteed by Lemma \ref{lem:successive quad extensions }. 
Let $v_1,...,v_{m}$ be all the places of $E$ over $F$.

To verify (1)(2), the key is the  quadratic base change by Langlands \cite{Lan}.
For $\Pi_{A,\iota}$  where $\iota:M_A\cong M_{A_E}\incl \BC$, 
let $\pi$ be the  corresponding cuspidal automorphic representation of $\GL_{2,F}$     holomorphic of weight 2 via the Jacquet--Langlands correspondence to $\GL_2$, as in Theorem \ref{YZZ322} (2).
The successive  base change  $\pi_E$ of $\pi$ to $E$ via the \textit{totally real} quadratic tower $E=E_r/E_{r-1}/.../E_1/F$ exists and is still cuspidal automorphic \cite{Lan}. 
Consider the Hecke field  $H(\pi)$, i.e., the field over $\BQ$ generated by the unramified Hecke eigenvalues of $\pi.$. Then by definition $H(\pi_E)\subset H(\pi)$.
 It is well known to experts that  in fact 
 \begin{equation}
\label{Hpp}H(\pi_E)=H(\pi).
\end{equation}
  (This essentially follows from the density for places of $F$ split in $E$.
  For the lack of a proper reference, we sketch a proof. 
Let $S$ be the set of finite places of $F$ where $\pi$ or $E$ is ramified. 
Consider the 2-dimensional $ \lambda$-adic representation $\rho$ of $\Gal(F^S/F)$
 associated  $\pi$, where $ \lambda$ is the finite place of $H(\pi)$ over an inert prime $\ell$ of $\BQ$ and $F^S\subset \ol F$ is the maximal unramified outside $S$ extension of $F$. Then the Frobenious traces  of $\rho$ outside $S$ are the same as Hecke eigenvalues  of $\pi$.
  By 
 Chebotarev's density theorem, $H(\pi)_\lambda$ is generated over $\BQ_\ell$ by the Frobenious traces at finite places of $F$ outside $S$ that are split in $E$.  The Frobenious traces are the same for the corresponding places in $E$ under $H(\pi_E)\subset H(\pi)$. Thus $H(\pi_E)\BQ_\ell=H(\pi)_\lambda$ and so $H(\pi_E)=H(\pi).$)

Now we can we verify (1)(2). 
For $u\in \{v_1,...,v_{m}\}$, since $\pi_{E,u}\cong \pi_v$ under $E_{u}\cong F_v$,
  $\pi_{E,u}$ is a discrete series. Since  $D$ is 
   a matrix algebra   at every finite place outside $\{v_1,v_2,...,v_{m}\}$,  
 the Jacquet--Langlands correspondence $ \pi'$ of  $\pi_E$   to  $D^\times$   is nonzero. By 
Theorem \ref{YZZ322} (2), $ \pi'_{\rf}\cong \leftidx{_D}\Pi_{A',\iota}$  for some 
simple abelian variety $A'$ over $E$ and an embedding 
  $\iota: M_{A'}\incl \BC$.  
  By \cite[3.2.3]{YZZ}, $H(\pi)\cong M_{A}, H(\pi_E)\cong M_{A'}$. 
Then  by \eqref{Hpp}, $M_{A} \cong M_{A'}$. Recall that by Theorem \ref
{YZZ322},  $[M_{A_i}:\BQ]=\dim A_i$. 
  Thus $$\dim A'=[M_{A'}:\BQ]= [M_{A}:\BQ]  =\dim A=\dim A_E.$$
  As both the   Galois representations associated to $A'$ and $A_E$  contain a direct summand corresponding to $\pi_E$ under the Langlands correspondence, by the isogeny theorem of Faltings, $\Hom(A',A_E)\neq 0$. Since $A'$ is simple and of the same dimension with $A_E$, $A_E\cong A'$ up to isogeny and is simple. 
  So $M_{A_E} \cong M_{A'}\cong M_A$ and  (1)  is proved.
  Also $\leftidx{_D}\Pi_{A_E}\cong  \leftidx{_D}\Pi_{A'}\neq 0$. Then the  final Jacquet--Langlands correspondence  follows by definition.
(2) is proved. 
      \end{proof} 

\begin{proof}[Proof of Theorem \ref{thm:2+}]

 If the restriction of the central character of $\Pi$ to the diagonal $\BA_\rf^\times$ is non-trivial, 
  the assumption in  Corollary \ref {generalcor} holds by Lemma \ref{lem:cc}.  The theorem follows with $E=F,D=B$.
Below, assume the restriction of the central character of $\Pi$ to the diagonal $\BA_\rf^\times$ is  trivial, 

First, let $J$ be the set of triples $\lb \iota_i:M_{A_i}\incl \BC,\, i=1,2,3\rb$ and $n=|J|$. (Then $n=[M_{A_1}:\BQ] [M_{A_2}:\BQ] [M_{A_3}:\BQ]$.) 
Let 
 $E$  and $ \{v_1,...,v_{m}\},\, m>n$ be as in Proposition \ref {ChoseE} so that (1) of the theorem holds.
 Moreover,  $\iota_i:M_{A_i}\incl \BC$ is also understood as an embedding $M_{A_{i,E}}\incl \BC$ via Proposition \ref {ChoseE} (1), and any latter embedding  comes in this way.

Second, we   construct $D$ as follows.
 Give $J$ an arbitrary order. 
For $ k=1,...,n$, if the $k$-th element of $J$ is $(\iota_1,\iota_2,\iota_3)$, let 
$\Pi_k=
\otimes_{i=1}^3\Pi_{A_i,\iota_{i}}$. 
Let $\ep_k=\ep(\Pi_{k,v})$. Apply Lemma \ref{lem:nonhyp} to get $D$.  Then by the  first condition in Lemma \ref{lem:nonhyp} and Proposition \ref {ChoseE} (2), (2) of the theorem holds.

 Third,  we verify the equation in (3) of  the theorem.  Let $(\iota_1,\iota_2,\iota_3)$ be the $k$-th element of $J$.
 By Proposition \ref {ChoseE} (2),
$\lb \otimes_{i=1}^3\Pi_{A_{i,E},\iota_{i }}\rb_u$ is the (triple product)  Jacquet--Langlands correspondence of $\Pi_{k,v}$ for $u\in \{v_1,...,v_{m}\}$. 
Since $\ep(D_{v_k})\neq\ep_k=\ep(\Pi_{k,v})$ (this is the second  condition in Lemma \ref{lem:nonhyp}), we
let $u=v_k$. Then 
 Proposition \ref{general3}  (with $B,\Pi,v$ there replaced by $D, \otimes_{i=1}^3\Pi_{A_{i,E},\iota_{i }}, u=v_k$ here) implies that $$\delta_{\lb \otimes_{i=1}^3\Pi_{A_{i,E},\iota_{i }}\rb^L,*}[\Delta_L]=0.$$
 As this holds for all triples $(\iota_1,\iota_2,\iota_3)$, by Lemma \ref{lem:AvsPi}, we have   
 $$[\Delta_L]_{{h^{A_{1,E}}(X_L)\otimes h^{A_{2,E}}(X_L)\otimes h^{A_{3,E}}(X_L)}}= 0.$$
   \end{proof}

Now we deal with non-hyperellipticity.  We need the following result of independent interest.
 
\begin{prop}\label{prop:nonhyp} Let $A$ be a simple abelian variety over a totally real field $F$.
 If $\Pi_A:=\leftidx{_B}\Pi_A\neq 0$ for an almost definite quaternion algebra $B$ over  $F$,  then for any $d>0$, there is   some $K\subset B^\times(\BA_\rf)$ such that   connected components of $X_{K,\BC}$ are non-hyperelliptic and $\dim_\BQ \Pi_A^K>d|\pi_0 (X_{K,\BC}) |$.

   \end{prop}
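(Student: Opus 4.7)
The plan is to exhibit a tower of compact open subgroups $K_n\subset B^\times(\BA_\rf)$ along which $\dim_\BQ \Pi_A^{K_n}$ grows linearly in $n$ while $|\pi_0(X_{K_n,\BC})|$ remains bounded, and then to pass to $n$ large enough that a gonality estimate forces every connected component of $X_{K_n,\BC}$ to be non-hyperelliptic.

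Fix an embedding $\iota_0\colon M_A\hookrightarrow \BC$. By Theorem~\ref{YZZ322}, $\Pi_{A,\iota_0}$ is the finite part of an irreducible cuspidal automorphic representation of $B^\times$ whose Jacquet--Langlands transfer to $\GL_{2,F}$ is cuspidal and holomorphic of weight $2$. Consequently, at every finite place $v$ of $F$ where $B$ splits, the local component $\Pi_{A,\iota_0,v}$ is an infinite-dimensional smooth irreducible representation of $\GL_2(F_v)$. I would fix such a $v$ and, using Casselman's newform theory at each place $w\neq v$, choose a compact open $K^v\subset B^\times(\BA_\rf^v)$ with $\nu(K^v)=\wh{\cO}_F^{v,\times}$ and $\Pi_{A,\iota_0}^{K^v}\neq 0$. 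For $n\geq 0$ I set $K_n:=K^v\cdot K_{1,v}(\fp_v^n)$, where $K_{1,v}(\fp_v^n)\subset\GL_2(\cO_{F,v})$ is the congruence subgroup of matrices $\equiv \bigl(\begin{smallmatrix}\ast&\ast\\0&1\end{smallmatrix}\bigr)\pmod{\fp_v^n}$; crucially, $\nu(K_{1,v}(\fp_v^n))=\cO_{F,v}^\times$ for every $n$.

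By Casselman's dimension formula for newforms, once $n$ exceeds the conductor of $\Pi_{A,\iota_0,v}$ the dimension $\dim_\BC\Pi_{A,\iota_0}^{K_n}$ grows linearly in $n$; summing over the $[M_A\colon\BQ]$ embeddings $\iota\colon M_A\hookrightarrow\BC$ yields the same linear growth for $\dim_\BQ \Pi_A^{K_n}$. On the other hand, $\nu(K_n)=\wh{\cO}_F^\times$ is independent of $n$, and strong approximation in $B^\times$ (applicable since $B$ is split at $\tau$) identifies $\pi_0(X_{K_n,\BC})$ with a fixed ray class group of $F$ whose order is bounded by the narrow class number $h_F^+$ uniformly in $n$. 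Hence $\dim_\BQ\Pi_A^{K_n}/|\pi_0(X_{K_n,\BC})|\to\infty$, and one can choose $n$ so that this ratio exceeds $d$.

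The main obstacle is to secure non-hyperellipticity of every component at the chosen level. Under the complex uniformization of \S\ref{Shimura curves and Hecke correspondences}, each connected component of $X_{K_n,\BC}$ has the form $\Gamma\bsl\BH$ for a congruence subgroup $\Gamma$ of the image of $B_+^\times$ in $\PSL_2(\BR)$ via $\tau$, and the co-volume of every such $\Gamma$ grows polynomially in $q_v^n$ as $n\to\infty$. Invoking the gonality lower bound for arithmetic hyperbolic surfaces---Abramovich's inequality in the form extended to Shimura curves via Zograf's inequality together with Selberg's bound $\lambda_1\geq 3/16$ on the first Laplacian eigenvalue of congruence quotients---the gonality of every component tends to infinity with $n$. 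Consequently, for $n$ large every component has gonality at least $3$ and is thus non-hyperelliptic. Choosing $n$ simultaneously large enough to also satisfy $\dim_\BQ \Pi_A^{K_n}>d\cdot|\pi_0(X_{K_n,\BC})|$ yields the required $K:=K_n$; pinning down a reference for a gonality bound valid uniformly across all components and all almost definite $B/F$ is the principal technical point.
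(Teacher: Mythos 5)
Your argument is correct in outline, but it proves the non-hyperellipticity step by a genuinely different method than the paper. For the quantitative part the two arguments are close in spirit: the paper shrinks the level through Eichler ($\Gamma_0(\fn)$) subgroups, observes that $\pi_0\lb X_0(\fn)_\BC\rb\cong\Cl_F^+$ is constant, and gets $\dim_\BQ\Pi_A^K$ large from the local theory of admissible representations; you instead shrink only at one fixed split place $v$ through $K_{1,v}(\fp_v^n)$ and use Casselman's newvector dimension formula, which works equally well (one small caveat: at places where $B$ or $\Pi_{A,\iota_0}$ ramifies you cannot always arrange $\nu(K^v)=\wh\cO_F^{v,\times}$, so $\pi_0(X_{K_n,\BC})$ may be a ray class group strictly larger than $\Cl_F^+$; this is harmless since it is independent of $n$, which is all you use). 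The real divergence is non-hyperellipticity: the paper shows that only finitely many Eichler levels can have a hyperelliptic component by Ogg's point-counting method, using supersingular points, and for general totally real $F$ the basic loci on Carayol's integral models at a prime $\fp\nmid\fn$ -- a hyperelliptic curve has too few points over the quadratic extension of the residue field. You instead invoke the analytic gonality lower bound (Li--Yau/Abramovich combined with the spectral gap $\lambda_1\geq 3/16$, available over totally real fields via Jacquet--Langlands), so that the gonality of every component tends to infinity with the covolume; this is precisely the "more analytic method" the authors point to in the remark after the proposition, carried out in their companion paper \cite[Appendix B]{QZfin}. Each route has its merits: Ogg's method is algebro-geometric and avoids spectral inputs but requires integral models and an adaptation of the supersingular count to basic loci, while your gonality route yields a stronger and more uniform conclusion (it rules out all low gonality, not just gonality $2$) at the cost of the spectral gap and of citing Abramovich's theorem in the quaternionic setting over a totally real field -- a citation issue you correctly flag, and which Abramovich's result (together with known bounds toward Ramanujan for Hilbert modular forms) does cover, so it is not a genuine gap.
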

\begin{proof}  
      For an ideal $\fn$ of $\cO_F$ coprime to all places where $B$ is division, let $ X_0(\fn)$   be the $\Gamma_0(\fn)$-level Shimura curve, i.e., $X_0(\fn)=X_K$  where   $K\subset B^\times(\BA_\rf)$ is  the closure of the so-called Eichler order of level $\fn$. 
      A direct computation shows that $  \pi_0\lb X_0(\fn)_\BC\rb=\Cl_F^+$, the narrow ideal class group. In particular, 
      $ | \pi_0\lb X_0(\fn)_\BC\rb|$ is a constant as $\fn$ changes. 
   By the standard theory of admissible representations of $\GL_2$ over  local field, we can choose (infinitely many) $\fn$   so that 
   $\dim \Pi_A^K>d|\pi_0 (X_{K,\BC}) |$. 
   Finally, we only need to show that there are only finitely many $\fn$ such that   connected components of $X_{K,\BC}$ are  hyperelliptic. This can be proved using  Ogg's method  in  \cite[\S5]{Ogg1}, where  $F=\BQ$ was considered.  When $B$ is a matrix algebra over $\BQ$, Ogg  used supersingular points on the smooth integral model of $Y_0(\fn)$ over $\cO_{F,\fp}$ for some prime $\fp\nmid \fn$. The supersingular points are defined over the unique separable quadratic extension  of the residue field at $\fp$.  For general $F$, we can use the method of Carayol \cite{Car} to define an integral model and use the basic loci (the generalization of the set of supersingular points) to run the proof of Ogg. 
\end{proof}  
\begin{rmk}In \cite[Appendix B]{QZfin}, we   prove a stronger result about gonality of geometrically  connected components of Shimura curves
using a more analytic method.
 \end{rmk}
Now we can  promote  Theorem \ref{thm:2+}    to a statement akin to the one in  Conjecture \ref{conj:1}.
  
A simple abelian variety $A$ over a totally real field $F$ is called modular (following \cite{YZZ}) if 
  we have $ \leftidx{_B}\Pi_A\neq 0$ for some almost definite quaternion algebra $B$ over  $F$.
  (As an example,  an elliptic curve over $\BQ$  is modular 
by the modularity theorem of Wiles and Taylor--Wiles et. al. \cite{Wil,TW,BCDT}.)
We call $A$ \textit{of  discrete series  type} if it is modular for some almost definite quaternion algebra $B$ and there exists a finite place $v$ of $F$ such that the local component $ \leftidx{_B}\Pi_{A,v}$ is a discrete series. 
This definition does not depend on the choice of $B$.

 \begin{cor}\label{cor:2+} 
Let $A $  be a simple  modular   abelian variety over $F$  of  discrete series  type. 
 Let      $A^o$ be a simple isogeny factor of $A_{\BC}$.
   Then there exists a  curve $X/\BC$   such that  $h^{A^o}(X)\neq 0$ and  $[\Delta]_{h^{A^o}(X)}=0.     $ Moreover, among  non-hyperelliptic $X/\BC$  with $[\Delta]_{h^{A^o}(X)}=0$,  the integer 
 $g\lb h^{A^o}(X)\rb$  is unbounded.

   \end{cor}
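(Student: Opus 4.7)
The plan is to bootstrap Theorem \ref{thm:2+} applied to the degenerate triple $A_1=A_2=A_3=A$, combining it with the base-change identity \eqref{eq:bc} and the level-raising input of Proposition \ref{prop:nonhyp}.

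First, since $A$ is modular of discrete series type, the hypotheses of Theorem \ref{thm:2+} are satisfied with all three abelian varieties equal to $A$. The theorem yields a finite totally real extension $E/F$ and an almost definite quaternion algebra $D$ over $E$ such that $A_E$ is simple, $\leftidx{_D}\Pi_{A_E}\neq 0$, and $[\Delta_L]_{h^{A_E}(X_L)^{\otimes 3}}=0$ for every open compact $L\subset D^\times(\BA_{E,\rf})$. Next, I would apply Proposition \ref{prop:nonhyp} to the data $(E,D,A_E)$ to produce a sequence of levels $L_d$ for which every geometric component of $X_{L_d,\BC}$ is non-hyperelliptic and $\dim_\BQ \Pi_{A_E}^{L_d}/|\pi_0(X_{L_d,\BC})|\to\infty$. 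Fixing an embedding $E\hookrightarrow\BC$, a pigeonhole on components selects a geometric component $X^o\subset X_{L_d,\BC}$ in whose Jacobian $A_{E,\BC}$ appears with multiplicity growing without bound.

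To pass from $A_{E,\BC}$ to the given simple factor $A^o$ of $A_\BC=A_{E,\BC}$, I would restrict cycles from $X_{L_d,\BC}^3$ to $X^{o,3}$; this sends $[\Delta_{L_d}]$ to $[\Delta^o]$. The vanishing on the full product, combined with the factorwise base-change identity $\delta_{A^o}\circ(\delta_{A_E}|_{X^{o,2}})=\delta_{A^o}$ from \eqref{eq:bc}, then yields $[\Delta^o]_{h^{A^o}(X^o)^{\otimes 3}}=0$. The non-vanishing of $h^{A^o}(X^o)$ and the unboundedness of $g(h^{A^o}(X^o))$ follow from the multiplicity statement of the previous step: large multiplicity of $A_{E,\BC}$ in $J_{X^o}$ translates to large multiplicity of its simple factor $A^o$, and hence to large $g(h^{A^o}(X^o))$.

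The main obstacle will be ensuring that the specific simple factor $A^o\subset A_\BC$ we were handed, as opposed to some Galois conjugate, appears in $J_{X^o}$ with the required growing multiplicity. I would deal with this by exploiting two symmetries: the Hecke action of $D^\times(\BA_{E,\rf})$, which moves arbitrary components of $X_{L_d,\BC}$ into one another, and the Galois action of $\mathrm{Gal}(\BC/E)$, which permutes the simple factors of $A_{E,\BC}$. After replacing $X^o$ by a suitable Hecke or Galois translate of the component produced by pigeonhole, one lands on a component in which the prescribed $A^o$ occurs with large multiplicity, completing the construction.
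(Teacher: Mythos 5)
Your proposal follows the paper's proof essentially step for step: apply Theorem \ref{thm:2+} with $A_1=A_2=A_3=A$ to get $E$ and $D$, restrict to a geometric connected component and use the base-change identity \eqref{eq:bc} to deduce $[\Delta]_{h^{A^o}(X)}=0$, and invoke Proposition \ref{prop:nonhyp} to get non-hyperellipticity and unbounded genus. The only (cosmetic) divergence is at the end: where you use a pigeonhole argument plus Hecke/Galois translation to land the prescribed factor $A^o$ in a good component, the paper simply chooses $L$ with $\dim_\BQ \Pi_{A_E}^L>g\cdot\dim A_E\cdot|\pi_0(X_{L,\BC})|$ and bounds $g\lb h^{A^o}(X)\rb\geq g\lb h^{A_E}(X_L)\rb/(\dim A_E\cdot|\pi_0(X_{L,\BC})|)$.
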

   \begin{proof}
    Apply    Theorem \ref{thm:2+} to $A_1=A_2=A_3=A$ to get $E/\BQ$ and $D$.     Then   $\delta_{A_E,*}^{\otimes 3}[\Delta_L]=[\Delta_L]_{h^{A_{E}}(X_L)}= 0.$
Let $X$ be  a   connected component of $X_{L,\BC}$. 
By \eqref{eq:bc},
 $ \delta_{A^o} \circ \lb \delta_{A_E}|_{X^{2}}\rb=  \delta_{A^o} .$
 So  $[\Delta]_{h^{A^o}(X)}=0.
     $
Now we  prove the second part of the theorem. Let $g>0$.
    Choose a suitable $L\subset D^\times(\BA_{E,\rf})$ using Proposition \ref{prop:nonhyp}  (with $A,F,B,K ,d$ replaced by $A_E, E,D ,L,g\dim A $)   such that $X$ is not hyperelliptic and 
    $\dim_\BQ \Pi_{A_E}^L>g\cdot\dim A_E  \cdot |\pi_0 (X_{L,\BC}) |$.
Since $g\lb h^{A_E}(X_L)\rb=\dim_\BQ \Pi_{A_E}^L $ and 
   $$g\lb h^{A^o}(X)\rb\geq  g\lb h^{A_E}(X_L)\rb/(\dim A_E\cdot |\pi_0 (X_{L,\BC}) | ),$$  
 we have  $g\lb h^{A^o}(X)\rb>g$. 
        \end{proof}
 
  Theorem \ref{thm:2} is a special case of Corollary \ref{cor:2+}, as follows.

\begin{proof}[Proof of Theorem \ref{thm:2}] 
 
To avoid overusing the notation ``$E$", let us use $\BE$ for the elliptic curve in the theorem over $F=\BQ$.
Let $B=\GL_{2,\BQ}$. 
 As we mentioned above,  
by the modularity theorem  \cite{Wil,TW,BCDT},   
  we have $ \leftidx{_{B}} \Pi_\BE\neq 0$.  Moreover, the local $L$-factors of $\BE$ coincide with the ones of $\Pi_\BE $.
 As that the $j$-invariant of $\BE$ is non-integral at prime $p$, by the  reduction theory  \cite[Proposition 5.4, 5.5]{Sil} and  local $L$-factors  \cite[C.16]{Sil}  of elliptic curves,  $\Pi_{\BE,p} $ is a discrete series (in fact, a special representation). 
Now the theorem follows from   Corollary \ref{cor:2+}.  \end{proof}
  
  \begin{rmk}\label{rmk fun fd}The analog of Theorem \ref{thm:2} holds for any elliptic curve $E$ over the  function field $F$ of a curve $C$ over a finite field; 
   that is, for every $g>0$, there exists a   non-hyperelliptic curve $X/\ol F$   such that 
   $[\Delta]_{h^{E_{\ol F}}(X)}=0$  and $g\lb h^{E_{\ol F}}(X)\rb>g$.
 Indeed, there are two cases. First,  if $E$ is isotrivial, even the  analog of  Conjecture \ref{conj:1}  is already a special case of 
  a theorem of Soul\'e \cite[Theorem 3]{Sou}, which says on a product of curves  
 over a finite field,  Chow 1-cycles  with $\BQ_\ell$-coefficients coincide (via the cycle class map) with 
 Tate 1-cycles.
 Second, if
 $E$ is not isotrivial,   its $j$-invariant has a pole at some point of the proper $C$ (while for $\BQ$, the curve $\Spec \,\BZ$ is not ``proper").
Then the same proof as that of Theorem \ref{thm:2}  applies, using   \cite{Qiu} in place of \cite{YZZ}. 
Here, we can allow all function fields as the modularity of elliptic curves is known.
\end{rmk}

\section{``Modular" curves of genus 3}\label{curves of genus 3}

In this section, we provide examples of non-hyperelliptic  curves  with trivial automorphism groups and vanishing modified diagonal cycles.

\subsection{Curves by dominated by Shimura curves} 
Let $C$ be a  curve over  $F$.   
For  a quaternion algebra  $B$   over $F$,
 following \cite{BGGP}, we say that $C$ is $B$-modular if there is a Shimura curve $ X_K$   
for $K\subset B^\times(\BA_\rf)$ (as  in the last section) with  a non--constant morphism $ X_K\to C$. 
 Note that, for a given curve $C$ over $F$, there are only  finitely many quaternion algebras  $B$   over $F$ such that $C$ is $B$-modular.
In fact $ C$ must have bad reductions at places where $B$ is ramified. 

We have the following generalization of  \cite[Conjecture 1.1]{BGGP} which deals with case $B=\RM_{2,\BQ}$.
\begin{conj}
\label{c:mod} 
Given a quaternion algebra  $B$   over $F$  and $g>1$,  there are only  finitely many $B$-modular  curves of genus $g$.
 \end{conj}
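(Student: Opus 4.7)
Since the analogous conjecture for $B=\RM_{2,\BQ}$ is itself open in general (proved only in low genera, e.g.\ by BGGP for $g=2$), any proof proposal has to be honest about the difficulties. The plan I would follow is a three-step reduction that decouples the conjecture into a hard local-to-global bound and a formal finiteness statement.

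First, a non-constant $f:X_K\to C$ yields a surjection $J_{X_K}\twoheadrightarrow \Jac(C)$ in the isogeny category, so $\Jac(C)$ is isogenous to a product of simple isogeny factors $A_1,\dots,A_r$ of $J_{X_K}$ with $\leftidx{_B}\Pi_{A_i}\neq 0$. By Theorem \ref{YZZ322}(1) we have $\dim A_i=[M_{A_i}:\BQ]\leq g$, and $\sum_i \dim A_i =g$. The aim is to show that the set of possibilities for the ordered tuple $(A_1,\dots,A_r)$ up to isogeny is finite, and then to deduce finiteness of $C$ itself.

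Next is the crucial and hardest step: bound, in terms of $B$ and $g$ alone, the Artin conductor of each $A_i$, equivalently the smallest level $K'$ such that $\leftidx{_B}\Pi_{A_i}^{K'}\neq 0$. A naive attempt combining Riemann--Hurwitz $\deg f\leq (g(X_K)-1)/(g-1)$ with Abramovich-type lower bounds on the gonality $\gamma(X_K)$ and the trivial $\gamma(X_K)\leq g\cdot\deg f$ is easily seen to be consistent with arbitrarily large $K$, so a finer input is required. A more promising line is to combine a lower bound on the Faltings height of a modular abelian variety in terms of its conductor with an upper bound on the Faltings height of $\Jac(C)$ coming from the existence of the dominant map $f$ (Bost-type slope inequalities applied to relative differentials), thereby bounding the conductor of each $A_i$ in terms of $g$ alone.

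Once the conductors are bounded, the finiteness of cuspidal automorphic representations of $B^\times$ of fixed archimedean type (holomorphic of weight $2$) and bounded level produces, via Theorem \ref{YZZ322}(2), only finitely many isogeny classes of simple $A_i$. Hence $\Jac(C)$ lies in one of finitely many isogeny classes; Faltings' isogeny theorem and finiteness of principal polarizations upgrade this to finitely many isomorphism classes; and Torelli, together with a descent argument using that $C$ is defined over $F$ and has bounded bad reduction, converts this to finitely many $C$. The real content is thus entirely in the conductor bound; without it the conjecture is not within reach, even for $B=\RM_{2,\BQ}$, and any complete proof must explain why the Faltings height of $\Jac(C)$ cannot be driven to infinity by its appearing as a quotient of ever larger $J_{X_K}$.
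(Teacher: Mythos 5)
The statement you are asked about is not proved in the paper at all: it is stated as Conjecture \ref{c:mod}, an open generalization of \cite[Conjecture 1.1]{BGGP} (which treats $B=\RM_{2,\BQ}$), and the authors explicitly say they mention it only to introduce the notion of $B$-modular curves. So there is no paper proof to compare against, and your text is likewise not a proof but a reduction skeleton with the decisive step left open — which you acknowledge. The formal parts of your outline are essentially the standard BGGP-type reductions and are fine in spirit: a non-constant $f:X_K\to C$ makes $\Jac(C)$ isogenous to a product of simple factors $A_i$ of $J_{X_K}$ with $\Pi_{A_i}\neq 0$; bounded conductors plus finiteness of weight-two automorphic representations of bounded level, Faltings' finiteness within an isogeny class, finiteness of principal polarizations, Torelli, and Shafarevich-type finiteness over $F$ would then give finitely many $C$ (though even here the passage from finitely many curves over $\ol F$ to finitely many over $F$ needs the bounded-bad-reduction input, which itself is only available after the conductor bound).

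The genuine gap is exactly where you place it, but your "more promising line" does not work as stated. An upper bound on the Faltings height of $\Jac(C)$ cannot be extracted from the existence of the dominant map $f$: the only a priori control a surjection $J_{X_K}\twoheadrightarrow \Jac(C)$ gives on the height of the quotient is in terms of the height of $J_{X_K}$ (up to isogeny/degree terms), and that height grows without bound with the level $K$, which is not bounded in terms of $B$ and $g$. Slope inequalities applied to relative differentials face the same obstruction: all the geometric invariants of $X_K$ entering such inequalities (genus, degree of the Hodge bundle, $\deg f$ via Riemann--Hurwitz) blow up with $K$, so nothing in the data depends on $g$ and $B$ alone. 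This is precisely the known difficulty — it is why \cite{BGGP} can only prove their finiteness theorems under a ``new'' hypothesis in low genus, and why the general conjecture, hence also this quaternionic version, remains open. In short: your proposal correctly isolates the hard step but supplies no mechanism for it, so it does not constitute a proof, and the paper offers none either.
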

      We mention this conjecture only to better introduce this new notion ``modular"  curves.
      We refer to  \cite{BGGP} for further details.
      \subsection{Good modular curves}
      Let $C$ be $B$-modular.   
 We say that $C$ is $B$-good (simply good  if $B$  is clear from the context) if, for any  triple $(A_1,A_2,A_3) $ of 
simple   isogeny  factors of  $J_C$ (thus also of $J_{X_K}$),
 the $\BQ$-representation
 $  \Pi_{A_1}\otimes \Pi_{A_2}\otimes \Pi_{A_3}$ of $B^\times(\BA_\rf)$  has no nonzero  $B^\times(\BA_\rf)$-invariant linear forms.
  
The notion ``good curve" is useful   by the following lemma, easily deduced from \eqref{eq:equiv}, Proposition  \ref{vancor1}   and Corollary \ref {generalcor}.
  We say that a curve has vanishing  modified diagonal cycle if $[\Delta_e]=0$ for some $e$ as in \S \ref{1.2}, i.e., 
   if either its geometrically connected curve genus  $\leq 1$, or  
  $[
  \Delta_e]=0$  with  $e$ being   the canonical class divided by the degree as in \eqref {eq:xi}.
   \begin{lem}\label{vancor2} 
Good curves    have vanishing modified diagonal cycles.

\end{lem}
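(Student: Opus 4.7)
The approach is to pull back the vanishing question from the good curve $C$ to a dominating Shimura curve $X_K$ and then invoke the automorphic criterion of Corollary \ref{generalcor}. If the geometrically connected components of $C$ have genus $\leq 1$, the claim is trivial, so I assume $g(C) \geq 2$. Choose a non-constant morphism $f: X_K \to C$ afforded by $B$-modularity, let $e_K$ be the normalized Hodge class on $X_K$ from \S\ref{Shimura curves and Hecke correspondences}, and set $e_C := f_* e_K$, a degree-$1$ class on $C$.

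By \eqref{eq:equiv} applied to this $e_C$, it suffices to show $[\Delta]_{h^{1}(C)} = 0$. Writing $\delta_C = \sum_i \delta'_i$ with $\delta'_i = \delta_{C, A_i}$ as $A_i$ runs over the simple isogeny factors of $J_C$ (each of which is also an isogeny factor of $J_{X_K}$ via $f^*$), one has $[\Delta]_{h^1(C)} = \sum_{(i_1, i_2, i_3)}(\delta'_{i_1} \otimes \delta'_{i_2} \otimes \delta'_{i_3})_*[\Delta_C]$. Proposition \ref{vancor1}(1) applied to $f$ (whose hypothesis $e_C = f_* e_K$ is built into our setup) then reduces the problem to verifying $(\delta_{i_1} \otimes \delta_{i_2} \otimes \delta_{i_3})_*[\Delta_{X_K}] = 0$ for each triple, where $\delta_i = \delta_{X_K, A_i}$.

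To check this last vanishing, I would use Lemma \ref{lem:AvsPi} to decompose each $\delta_{A_i}$ over $\BC$ as $\delta_{A_i} = \sum_\iota \delta_{\Pi_{A_i, \iota}^K}$, indexed by embeddings $\iota: M_{A_i} \incl \BC$; this further reduces the problem to showing $\delta_{\Pi^K, *}[\Delta_K] = 0$ for each triple $(\iota_1, \iota_2, \iota_3)$ with $\Pi := \bigotimes_{j=1}^{3} \Pi_{A_{i_j}, \iota_j}$. By Corollary \ref{generalcor}, this holds as soon as $\Pi$ admits no nonzero diagonal-$B^\times(\BA_\rf)$-invariant linear form. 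The goodness of $C$ asserts exactly that the $\BQ$-representation $\Pi_{A_{i_1}} \otimes \Pi_{A_{i_2}} \otimes \Pi_{A_{i_3}}$ has no such invariants, and since each $\Pi$ is a direct summand of its base change to $\BC$, the desired vanishing propagates to every individual triple of embeddings.

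The main, and essentially only, hurdle is bookkeeping: tracking the isotypic projectors across the three identifications $\DC^1(X_K^2) \cong \End(J_{X_K})_\BQ$, the decomposition into $A$-isotypic pieces for simple isogeny factors of $J_{X_K}$, and the further refinement after base change to $\BC$ into simple $\cH_K$-modules $\Pi_{A,\iota}^K$. Once these are aligned, each link in the chain follows immediately, which is why the lemma is billed as ``easily deduced.''
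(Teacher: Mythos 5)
Your proposal is correct and follows essentially the same route the paper intends: reduce via \eqref{eq:equiv} with $e_C=f_*e_K$, transfer the vanishing from $C$ to the dominating Shimura curve $X_K$ by Proposition \ref{vancor1}(1), split into $\Pi_{A_i,\iota}$-components as in Lemma \ref{lem:AvsPi}, and conclude by Corollary \ref{generalcor} using that goodness over $\BQ$ rules out invariant forms on each complex summand. No gaps.
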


       \subsection{Good modular curves of genus $3$} 

Let $F=\BQ,B=\RM_{2,\BQ}$. 
 In \cite[Table 1]{GO}, some   $B$-modular  non-hyperelliptic genus 3 geometrically connected curves  $C$ are listed  in  \cite[Table 1]{GO}, with explicit projective equations.  
 Let us only consider $C/\BQ$ in  \cite[Table 1]{GO} dominated by the standard modular curve $ X_0(N)$. In particular $C$ is new. So  to each simple   isogeny  factor $A$ of  $\Pic^0_{C/F}$  is  associated a newform $f$ of $\Gamma_0(N)$.  And we say that  ``$f$ appears in $C$".

  The good ones are given in \eqref{sqfree},\eqref{p||N} and \eqref{459}. We only sketch a proof of their goodness, i.e., computations of trilinear forms. The reader may refer to \cite{QZfin}, where  we compute many examples, for  more details on such computations.

  If  
 $N$ is square free  in  \cite[Table 1]{GO}, that is  
 \begin{align*}
 N=&43,57,65,82,91,97,109,113,118,123,127, 139,\\ &141,149,151,179,187,203,
217,239,295,329,
\end{align*}  use Atkin--Lehner signs read from the database LMFDB \cite{LMFDB} to check if $C$ is good or not good. Explicitly, $C$ is good if and only if for 
every three (not necessarily different)  newforms $f_1,f_2,f_3$ appearing in $C$, there is some  prime $p|N$, such that the   Atkin--Lehner signs $s_{i,p}$ of $f_i$, $i=1,2,3$, satisfy   $\prod_{i=1}^3s_{i,p}=-1$. 
We find $C$ good if and only if 
\begin{equation}
\label{sqfree}C=C_{217}^A,C_{295}^A,C_{329}^C
\end{equation}
in the notations in \cite[Table 1]{GO}. 
 In general, if $p||N$, we similarly check whether $C$ is good (but we do not  to  check whether $C$ is not good). 
 We find $C$ good if   
 \begin{equation}
\label{p||N}C=C_{475}^E,C_{1175}^D.
\end{equation}

For $N= 99,169,369,  855=p^2M$ where the prime $p\nmid M$ (there is a unique such choice of $p$)  in  \cite[Table 1]{GO}, we   use 
 Prasad's dichotomy   \cite{Pra,Pra1}  and
 Jacquet--Langlands correspondence  at $p$ to the unit group $D^\times$ of the division quaternion algebra $D$ over $\BQ_p$. Indeed, the Jacquet--Langlands correspondences are  representations of dihedral groups (by the structure of $D^\times$). 
From the character tables of these groups, we easily prove that the Jacquet--Langlands correspondences does have trilinear forms (so the original one does not). 

The remaining $N$ are $N=243,459, 1215, 1539$.
We find $C$ good if and only if
\begin{equation}
\label{459}C=C_{459}^{B,I} .
\end{equation} 
The verification is by   using LMFDB,  the computer algebra system  Sage \cite{Sage} and a criterion of Prasad  \cite[Proposition 6.7]{Pra} (relating trilinear forms to compact induction datum). 
  And for  $N=1215$, the   computation by Sage is  pretty large.  We ran it with
 Apple M2 chips and 
 8 GB memory for more than
 17 hours!
   
   \begin{thm}\label{thm:AutC}
(1) The curves $ C_{217}^A,C_{295}^A,C_{329}^C ,C_{475}^E,C_{1175}^D $ and $C_{459}^{B,I}$, have vanishing modified diagonal cycles.

  (2) For $C=C_{217}^A,C_{295}^A,C_{329}^C ,C_{475}^E,C_{1175}^D $, $\Aut(C_\BC)$ is trivial.
 
 (3)   For $C= C_{459}^{B,I}$, $\Aut(C_\BC)<\BZ/2$.
   \end{thm}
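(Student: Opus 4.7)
\medskip

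\noindent\textbf{Proof proposal.} Part (1) is essentially a formal consequence of the preceding discussion. The curves listed are exactly the modular quotient curves from \cite[Table 1]{GO} that we have argued to be $B$-good, where $B=\RM_{2,\BQ}$: for the squarefree levels \eqref{sqfree}, goodness was checked via Atkin--Lehner signs; for \eqref{p||N} one uses the same Atkin--Lehner test at the prime $p$ with $p\|N$; and for $C_{459}^{B,I}$ in \eqref{459} one uses the compact induction criterion of Prasad at $p=3$ (this being the only computationally nontrivial case in the list of genus $3$ curves). Once goodness is established, Lemma \ref{vancor2} applies directly, so $[\Delta_e]=0$ for each curve in (1). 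I would simply organize this into a short invocation of the earlier results, pointing back to the relevant case-by-case discussion.

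For parts (2) and (3), the key reduction is the following standard fact: every curve $C$ in the list is a non-hyperelliptic curve of genus $3$, hence its canonical embedding realizes $C$ as a smooth plane quartic in $\BP^2$, and the natural map $\Aut(C_\BC)\to \PGL_3(\BC)$ is injective with image the subgroup of projective linear transformations fixing the defining quartic up to scalar. Thus the computation of $\Aut(C_\BC)$ is a finite, purely linear-algebraic problem once one has an explicit equation. Since \cite[Table 1]{GO} provides explicit projective equations for each of the six curves, the plan is to carry out this computation directly: set up the matrix of unknown coefficients $g\in \GL_3$, impose that $g$ preserves the quartic form up to a scalar, and solve the resulting polynomial system. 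In practice this is efficiently done using Dixmier--Ohno invariants together with Henn's classification of the automorphism loci in the moduli space of smooth plane quartics; one computes the Dixmier--Ohno invariants of each equation, locates the resulting point in Henn's stratification, and reads off the automorphism group.

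For the first five curves $C_{217}^A, C_{295}^A, C_{329}^C, C_{475}^E, C_{1175}^D$, the expected answer is that the Dixmier--Ohno invariants are generic (i.e., not contained in any of Henn's nontrivial automorphism strata), so the direct check or equivalently the Gr\"obner basis calculation returns only the identity. For $C_{459}^{B,I}$, the presence of an Atkin--Lehner involution on the ambient $X_0(459)$ that descends to $C$ (which is itself a quotient coming from Atkin--Lehner structure, cf.\ the notation in \cite{GO}) accounts for a $\BZ/2$ in $\Aut(C_\BC)$; to show $\Aut(C_\BC)\subset \BZ/2$ one again checks that the Dixmier--Ohno invariants land in the $\BZ/2$-stratum and not any larger one, or equivalently that no element of $\PGL_3$ of order $>2$ preserves the quartic.

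The main obstacle is computational rather than conceptual: the quartic defining $C_{459}^{B,I}$ (and to a lesser extent the others) has moderately complicated coefficients, and a direct Gr\"obner-basis attack on the system ``$g\cdot F=\lambda F$'' can be expensive. The cleanest remedy is to use the Dixmier--Ohno invariant approach, which reduces the problem to evaluating a fixed finite list of $\SL_3$-invariants on the given quartic and comparing them against the known automorphism strata; this is well within the reach of Magma or Sage and avoids any large symbolic elimination. Once the invariants are computed, parts (2) and (3) follow by inspection.
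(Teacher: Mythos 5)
Your part (1) coincides with the paper's proof: the listed curves are exactly the ones shown to be $B$-good in the preceding discussion, and Lemma \ref{vancor2} then gives the vanishing; nothing more is needed there.

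For parts (2) and (3) you take a genuinely different route. You work directly with the explicit plane quartic models from \cite[Table 1]{GO}, identifying $\Aut(C_\BC)$ with the stabilizer of the canonical quartic in $\PGL_3(\BC)$ and deciding it either by solving the system ``$g\cdot F=\lambda F$'' or via Dixmier--Ohno invariants and the stratification of the moduli of plane quartics by automorphism groups. This is a legitimate and, if carried out, even sharper method (it determines $\Aut(C_\BC)$ exactly rather than bounding it), but it is an explicit computation your proposal only sketches with ``expected'' outcomes; also, your aside that an Atkin--Lehner involution descends to produce a $\BZ/2$ inside $\Aut(C_{459,\BC}^{B,I})$ is neither required by the statement (which only asserts an upper bound) nor justified, and note that checking that no element of order $>2$ preserves the quartic bounds only the exponent, so for (3) you really do need the finer comparison of strata to exclude a Klein four group, as you indicate. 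The paper never touches the equations: by \cite[Lemma 2.1]{BaHa}, $\Aut(C_\BC)$ embeds into the unit group of the order $\End(J_\BC)$, and Lemmas \ref{lem:C1} and \ref{lem:C2} (using modularity, the Hecke field read off from \cite[Table 1]{GO} or LMFDB, and \cite[Proposition 2.1]{GL} to see that all endomorphisms are defined over $\BQ$) identify $\End(J_\BC)_\BQ$ as a totally real cubic field, respectively $\BQ\times(\text{real quadratic field})$, whose orders have only $\pm 1$ as torsion units; this immediately forces $\Aut(C_\BC)$ inside a $2$-group of rank at most the number of simple factors, and in case (2) the classification (via the computations of \cite{QZ1} with \cite[Table 1,2]{MSSV}) of automorphism groups of non-hyperelliptic genus-$3$ curves with simple Jacobian, which are subgroups of $\BZ/3$ or equal to $\BZ/9$, rules out any involution and yields triviality. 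The trade-off is clear: your approach is equation-based and computationally heavier but self-contained and exact; the paper's is essentially computation-free, leveraging endomorphism-algebra data already in play, at the cost of giving only the bounds stated in the theorem.
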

   We need some preparations.
   \begin{lem}\label{lem:C1}   For $C=C_{217}^A,C_{295}^A,C_{329}^C ,C_{475}^E,C_{1175}^D$, its Jacobian $J$ is absolutely simple and $\End(J_\BC)_\BQ$ is a totally real cubic field.
\end{lem}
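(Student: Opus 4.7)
The plan is to use modularity plus Eichler--Shimura theory to reduce the statement to properties of a single newform, and then appeal to Ribet's theorem on endomorphism algebras of modular abelian varieties. Since each $C$ in the list is dominated by $X_0(N)$ and has genus $3$, its Jacobian $J$ is $\BQ$-isogenous to a simple new factor $A_f$ of $J_0(N)^{\new}$, where $f$ is a weight-$2$ newform on $\Gamma_0(N)$ with trivial character whose Galois orbit has size exactly $\dim J = 3$. The Hecke field $K_f := \BQ(\{a_n(f)\})$ is then a cubic number field, and it is automatically totally real (the Hecke operators $T_n$ act self-adjointly on $S_2(\Gamma_0(N))$ with respect to the Petersson inner product). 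The Hecke action gives a canonical embedding $K_f \hookrightarrow \End(J)_\BQ \subset \End(J_\BC)_\BQ$.

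First I would identify, via the LMFDB label attached to each curve in \cite[Table 1]{GO}, the corresponding newform $f$, and read off from LMFDB \cite{LMFDB} that $[K_f:\BQ] = 3$. Next, for each $f$ I would check from LMFDB that $f$ has no complex multiplication and no nontrivial inner twists. By Ribet's theorem, this pair of conditions implies $\End(A_{f,\ol\BQ})_\BQ = K_f$, and in particular $A_f$ (hence $J$) is absolutely simple with $\End(J_\BC)_\BQ \cong K_f$, giving both conclusions of the lemma. Note that once we know $\End(J_\BC)_\BQ$ is a field of degree equal to $\dim J$, absolute simplicity is immediate: any isogeny decomposition would force $\End(J_\BC)_\BQ$ to contain a nontrivial idempotent, contradicting that it is a field.

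The main obstacle is really one of verification rather than ideas: the absence of CM and inner twists for each of the five newforms is tabulated by LMFDB but is not something I would redo by hand. For a self-contained check one could instead fix a prime $p \nmid N$ with $a_p(f) \neq 0$ and verify that the subfield of $K_f$ generated by the ratios $a_p(f)^2 / p$ over various primes rules out any quadratic imaginary field (non-CM) and any proper subfield of $K_f$ (no inner twists, since a nontrivial inner twist would force the $a_p(f)^2$ to lie in a proper subfield of $K_f$). For the cubic case this reduces to exhibiting a single good prime where $a_p(f)^2$ already generates $K_f$ over $\BQ$, which is a short finite computation and is the only step that substantively uses the specific curves in the list.
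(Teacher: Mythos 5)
Your proposal is correct and follows essentially the paper's route: modularity identifies $J$ up to isogeny with $A_f$ for a newform $f$ with totally real cubic Hecke field $K_f=\End(J)_\BQ$, and a Ribet-type theorem (the paper cites \cite[Proposition 2.1]{GL}, which is the same circle of ideas as the inner-twist criterion you invoke) reduces absolute simplicity and $\End(J_\BC)_\BQ=K_f$ to showing that the subfield generated by the squares $a_p(f)^2$, $p\nmid N$, is all of $K_f$. The one genuine difference is that the paper dispenses with the step you flag as the ``only substantive verification'': since $K_f$ is cubic, the condition is automatic --- if the $a_p^2$ generated only a proper subfield, that subfield would be $\BQ$, but an element of a cubic field whose square is rational has degree dividing both $2$ and $3$, hence is rational, forcing $K_f=\BQ$, a contradiction. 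So no LMFDB lookup of CM/inner-twist data and no choice of a good prime is needed; the argument is uniform in the five curves. (Minor point: the relevant field is generated by $a_p^2/\epsilon(p)$, which here is just $a_p^2$ since the nebentypus is trivial, not by $a_p^2/p$; your later formulation in terms of $a_p(f)^2$ generating $K_f$ is the correct one.)
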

\begin{proof}By definition, $J$ is a modular abelian variety.  Thus $J$  is simple over $\BQ$ and $\End(J)_\BQ$ is a totally real  cubic field, generated by coefficients of (the Galois  orbit of) newform $f$  (see \cite[Table 1]{GO} for the labeling of the $f$)   corresponding to $\Pi_J$.  
By \cite[Proposition 2.1]{GL}, if the squares of $p$-th coefficients of   $f$, $p\nmid N$, generate $\End(J)_\BQ$, then $\End(J)_\BQ=\End(J_{\ol\BQ})_\BQ$ and the lemma follows.  
 But the ``if " condition follows trivially from that $\End(J)_\BQ$ is a totally real  cubic field.
\end{proof}
   
   \begin{lem}\label{lem:C2}     For $C_{459}^{B,I}$, its Jacobian $J=E\times A$ over $\BQ$ where $E$ is an elliptic curve  
   and $A$ is an abelian variety of dimension $2$ such that  $\End(E_\BC)_\BQ=\BQ$ and  $\End(A_\BC)_\BQ$  is a totally real quadratic field.
In particular, $\Hom(E,A)$ is trivial.
   \end{lem}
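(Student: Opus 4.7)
My plan is to extract the structural decomposition of $J=\Jac(C_{459}^{B,I})$ directly from the modular data, then verify each of the three assertions (absolute simplicity of $E$, real multiplication on $A$, and vanishing of $\Hom(E,A)$) using the same toolkit already invoked for Lemma \ref{lem:C1}.

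First, I would consult LMFDB at level $N=459$ to identify the newform orbits appearing in $C_{459}^{B,I}$. Since $\dim J=3$, the orbits whose associated modular abelian varieties sit inside $J$ must have total dimension $3$. The pattern labelled ``$B,I$'' in \cite{GO} indicates that $C$ is new and decomposes as one newform with rational Hecke field together with a single Galois orbit of size $2$ with a totally real quadratic Hecke field. This gives, up to isogeny over $\BQ$, the decomposition $J\sim E\times A$ with $E$ an elliptic curve attached to the rational newform and $A$ a $\GL_2$-type abelian surface attached to the quadratic orbit. After this identification we already have $\End(E)_\BQ=\BQ$ and $\End(A)_\BQ\supseteq K$ for some real quadratic $K$.

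Second, I would upgrade these to statements over $\BC$. For $E$, the LMFDB entry records whether $E$ has CM; since the newform in question has no inner twist and its attached elliptic curve has non-CM $j$-invariant, we conclude $\End(E_\BC)_\BQ=\BQ$. For $A$, I would apply the Gonz\'alez--Lario criterion \cite[Proposition 2.1]{GL} exactly as in the proof of Lemma \ref{lem:C1}: it suffices to exhibit a prime $p\nmid 459$ whose Hecke eigenvalue $a_p$ of the relevant newform satisfies $\BQ(a_p^2)=K$. This guarantees $\End(A)_\BQ=\End(A_\BC)_\BQ=K$, a totally real quadratic field. This step is essentially a finite calculation in LMFDB and is the routine part of the argument.

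Third, for $\Hom(E,A)=0$ I would invoke Faltings' isogeny theorem. The $\ell$-adic Galois representations of $E$ and of $A$ are the $\ell$-adic realizations of two non-isomorphic simple modular motives (different newform orbits), hence share no Jordan--H\"older constituent after any base change to a number field containing the Hecke field; in particular $\Hom_{\Gal(\ol\BQ/\BQ)}(V_\ell E,V_\ell A)=0$. Combined with the fact that $E$ has dimension $1$ while every simple isogeny factor of $A$ has dimension $\geq 2$ (since $A$ is of $\GL_2$-type with quadratic Hecke field, hence $\BQ$-simple), this forces $\Hom(E,A)=0$. The mild obstacle here is being sure that the two newform orbits produce genuinely non-isomorphic Galois representations; this is immediate from looking at any single unramified prime with distinct Hecke traces in LMFDB, so no deep input is needed beyond the modularity/Faltings framework already used in the paper.
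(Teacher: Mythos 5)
Your proposal matches the paper's proof in its essentials: the decomposition $J\sim E\times A$ and the endomorphism algebras are read off from LMFDB and \cite[Table 1]{GO}, with the Gonz\'alez--Lario criterion \cite[Proposition 2.1]{GL} applied exactly as in Lemma \ref{lem:C1}. The Faltings detour for $\Hom(E,A)=0$ is unnecessary (and your intermediate claim that $\BQ$-simplicity of the $\GL_2$-type surface forces its \emph{geometric} simple factors to have dimension $\geq 2$ is not valid in general, since such surfaces can be $\ol\BQ$-isogenous to a square of an elliptic curve): the vanishing follows at once, even over $\BC$, because $\End(A_\BC)_\BQ$ being a quadratic field forces $A_\BC$ to be simple, so no isogeny factor can match the $1$-dimensional $E$.
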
 
\begin{proof} The proof is similar to  that of Lemma \ref{lem:C1}, except now we need to use LMFDB, which displays $\End(E_\BC)_\BQ$ and coefficients of (the Galois  orbit of) newform  corresponding to  $\Pi_A$.  
\end{proof}

\begin{proof}[Proof of Theorem \ref{thm:AutC}]
(1) follows form Lemma \ref {vancor2} and the discussion on the curves above the theorem.

(2) By  computations   as in our previous work  \cite{QZ1} (with   \cite[Table 1,2]{MSSV}), we find that a non-hyperelliptic genus 3   curve   over $\BC$ with simple Jacobian has automorphism group either contained in $\BZ/3$ or being $\BZ/9$.  However, 
by  \cite[Lemma 2.1]{BaHa}, $\Aut(C_\BC)$  is isomorphic to a (finite) subgroup of $\End(J _\BC)^\times$.
As $\End(J _\BC)$ is an order in $\End(J _\BC)_\BQ$, 
 $\Aut(X)$  is isomorphic to a   subgroup of $  \{\pm 1\}$ by Lemma \ref{lem:C1}.
  So  $\Aut(X)$  is trivial.

(3) is similar to (2), and we use Lemma \ref{lem:C2} instead. 
\end{proof}

\appendix
\section{Deninger--Murre projectors}
We collect a few known results on Fourier transform and Deninger--Murre projectors for (Chow groups of) abelian varieties.
\subsection{Fourier transform}
Let $F$ be a field and 	$S$  a smooth quasi-projective connected $F$-scheme. 
Let $A$ be an abelian scheme over $S$ and $\wh A$ the dual abelian scheme. 
 Let  $l\in \Ch^1(A\times_S \wh A ) $ be the class of a  Poincar\'e line bundle rigidified along the zero sections, see \cite[2.5]{DM}.
 For $a\in A(S)$, consider the closed embedding $$i_a=a\times \id_{\wh A}: \wh A=S\times_S \wh A\to A\times_S \wh A .$$
Let $\Ch^*(A\times_S \wh A )$ be the Chow ring  with the usual intersection product. 
Define 
  $$f=\exp(l):=\sum_{i\geq 0} \frac{l^n}{n!}\in \Ch^*(A\times_S \wh A ) .$$
   This is the kernel to define the Fourier transform from  $ \Ch^*(A)$ to $ \Ch^*( \wh A )$. We have  the following equation of Beauville 
   in the relative case \cite[(1.3.3)]{Kun}
      \begin{equation}\label{faS} 
  f_*(a_* [S])=\exp(i_a^*l) .
   \end{equation} 
   \subsection{Deninger--Murre projectors }
For $n\in \BZ$, let   ${n_A}$ be the  ``multiplication by $n$" morphism on $A$.
For $j\in \BZ$, consider the following eigenspace $$\Ch^i_j(A)=\{z\in\Ch^i(A):\forall n\in \BZ, n_A^* z=n^{2i-j}z \}.$$
By \cite[Theorem  2.19]{DM}, $\Ch^i_j(A)\neq 0$ for  only finitely many $j$, and 
we have 
$$\Ch^i(A)=\mathop\oplus_{j\in \BZ}\Ch^i_j(A).$$
 The Deninger--Murre projectors  are projectors for this decomposition, simultaneously for all $i$.
  \begin{thm}[{\cite[Theorem 3.1]{DM}}]\label{Deninger--Murre projectors}
  Let $\Gamma_{n_A}\subset A\times_S  A $ be the graph of $n_A.$
There is a unique decomposition $$[\Gamma_{1_A}]=\sum_{k=0}^{2g}\pi_k\quad \text{in}\quad\Ch^*(A\times_S   A )$$
such that $
 [\Gamma_{{n_A}}^t]\circ \pi_k=n ^k\pi_k =
 \pi_k\circ [\Gamma_{{n_A}}^t] $ for all $n\in \BZ$. Moreover, $\pi_k\circ\pi_j=0$ if $i\neq j$, and $\pi_k^2=\pi_k$.

\end{thm}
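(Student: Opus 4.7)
The plan is to construct the projectors $\pi_k$ via Beauville's eigenspace decomposition of $\Ch^*(A)$, using the Fourier kernel $f=\exp(l)$ as the main organizing tool. First I would establish the key input, namely that for each $i$ the decomposition $\Ch^i(A)=\bigoplus_{j\in\BZ}\Ch^i_j(A)$ has only finitely many nonzero summands $\Ch^i_j(A)$ (for $j$ in a range determined by $i$ and $g=\dim A/S$). This finiteness comes from the formula $f_*(a_*[S])=\exp(i_a^* l)$ in the relative setting together with the naturality of $f$ with respect to $n_A$, which forces $n_A^*$ to act on $\Ch^i(A)$ through a polynomial of bounded degree in $n$.

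Second, for uniqueness, suppose $[\Gamma_{1_A}]=\sum_k \pi_k=\sum_k \pi'_k$ are two such decompositions. The relation $[\Gamma_{n_A}^t]\circ \pi_k = n^k \pi_k$ means that $\pi_k$ is an $n^k$-eigenvector for left composition by $[\Gamma_{n_A}^t]$; since the required property holds for every $n\in\BZ$ and the eigenvalues $\{n^k : 0\le k\le 2g\}$ are distinct for $|n|$ large, a Vandermonde argument separates the $\pi_k$ and forces $\pi_k=\pi'_k$.

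Third, for existence, I would apply Lagrange interpolation: choose $2g+1$ distinct integers $n_0,\ldots,n_{2g}$ and solve the linear system
\begin{equation*}
[\Gamma_{n_\ell,A}^t] = \sum_{k=0}^{2g} n_\ell^k \pi_k, \qquad \ell = 0,\ldots,2g,
\end{equation*}
whose coefficient matrix is Vandermonde and hence invertible over $\BQ$. The resulting $\pi_k$ are a priori $\BQ$-linear combinations of the $[\Gamma_{n_\ell,A}^t]$, and one then verifies $\sum_k \pi_k = [\Gamma_{1_A}]$ by setting $n=1$. The finite-support statement from the first step ensures the interpolation captures all eigenspaces and that no correction terms arise.

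Fourth, the algebra $\pi_k\circ\pi_j=\delta_{kj}\pi_k$ follows from a double application of the eigenvalue equation: composing $[\Gamma_{n_A}^t]\circ(\pi_k\circ\pi_j)$ in two ways gives simultaneously $n^k(\pi_k\circ\pi_j)$ and $n^j(\pi_k\circ\pi_j)$, which forces $\pi_k\circ\pi_j=0$ for $k\ne j$; then $\pi_k=\pi_k\circ[\Gamma_{1_A}]=\pi_k\circ\sum_j\pi_j=\pi_k^2$. The symmetry $\pi_k\circ [\Gamma_{n_A}^t] = n^k\pi_k$ follows by taking transposes and using $[\Gamma_{n_A}]^t = [\Gamma_{n_A}^t]$ together with the compatibility of transpose with composition. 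The main obstacle is the finiteness of the Beauville decomposition: once the eigenspaces are finite-dimensional in the sense of only finitely many nonzero $\Ch^i_j$, the rest is linear algebra via Vandermonde, so the crux of the argument is the Fourier-transform computation with $f=\exp(l)$ that produces this finiteness in the relative setting over the base $S$.
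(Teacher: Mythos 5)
This statement is not proved in the paper at all: it is quoted verbatim from \cite[Theorem 3.1]{DM}, so the only meaningful comparison is with Deninger--Murre's argument. Your overall shape (relative Beauville finiteness plus a Vandermonde/Lagrange-interpolation construction from the classes $[\Gamma_{n_A}^t]$, then the formal orthogonality/idempotency computation) is the standard one, and your uniqueness and $\pi_k\circ\pi_j=\delta_{kj}\pi_k$ steps are fine. But the existence step has a genuine gap. The finiteness you invoke is for $\Ch^i(A)$ of the abelian scheme $A/S$, whereas the class you must decompose is $[\Gamma_{1_A}]\in\Ch^g(A\times_S A)$ under the operators $z\mapsto[\Gamma_{n_A}^t]\circ z=(\id_A\times n_A)^*z$; the relevant eigenspaces are those of the abelian scheme $A\times_S A\to A$ (first projection), not of $A/S$. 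More seriously, to get exactly $2g+1$ projectors indexed by $k=0,\dots,2g$ you must show that only exponents in $[0,2g]$ occur in the diagonal class; finiteness alone gives a range that a priori also involves the dimension of the new base $A$, and if any exponent outside $[0,2g]$ occurred, the classes produced by interpolation at $2g+1$ nodes would fail the eigenvalue identity for general $n$. Pinning down this range is the real content of \cite[Theorem 3.1]{DM} beyond linear algebra, and your sketch does not address it. (Also, the identity \eqref{faS} you cite as the source of finiteness is what the appendix uses, via Lemma \ref{Deninger--Murre projectors1}, to prove the additivity statement Corollary \ref{Deninger--Murre projectors2}; the finiteness is \cite[Theorem 2.19]{DM}, proved by Fourier inversion.)

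The second gap is the two-sided eigenvalue relation. Transposing $[\Gamma_{n_A}^t]\circ\pi_k=n^k\pi_k$ gives $\pi_k^t\circ[\Gamma_{n_A}]=n^k\pi_k^t$, which is a statement about $\pi_k^t$, not about $\pi_k$; and in fact $\pi_k^t=\pi_{2g-k}$ (this is Corollary \ref{Deninger--Murre projectorscor}(1) of the paper), equivalently $\pi_k\circ[\Gamma_{n_A}]=n^{2g-k}\pi_k$ --- which is not the asserted $\pi_k\circ[\Gamma_{n_A}^t]=n^k\pi_k$. In operator terms, the left relation says $(\id_A\times n_A)^*\pi_k=n^k\pi_k$ while the right relation says $(n_A\times\id_A)_*\pi_k=n^k\pi_k$; these are different operators, and proving that the same $\pi_k$ is an eigenvector for both with the same eigenvalue requires a separate argument (e.g.\ comparing the decompositions relative to the two projections of $A\times_S A$ and using how $n_*$ versus $n^*$ act on the graded pieces), which \cite{DM} supply and your ``take transposes'' step does not. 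As written, step four both overclaims and, if taken literally, would conflict with $\pi_k^t=\pi_{2g-k}$.
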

  \begin{cor} \label{Deninger--Murre projectorscor}
(1) We have $\pi_k^t=\pi_{2g-k}.$ In particular, $\pi_k\circ [\Gamma_{{n_A}}]= n ^{2g-k}\pi_k$.

(2) Let $u$ be the identity $S$-point of $A$.
If $k\neq 2g$, $\pi_{k,*}[u]=0$

(3) If $S=\Spec F$, then $\pi_{1,*}|_{\Ch^1(A)_0}$ is identity.

\end{cor}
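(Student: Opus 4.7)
The plan for (1) is to first derive a companion eigenvalue relation for $\pi_k$ and then invoke uniqueness. Since $n_A$ has degree $n^{2g}$, the correspondence composition gives $[\Gamma_{n_A}] \circ [\Gamma_{n_A}^t] = n^{2g}[\Gamma_{1_A}]$ on $A \times_S A$. Composing this with $\pi_k$ and using the defining relation $[\Gamma_{n_A}^t] \circ \pi_k = n^k \pi_k$ of Theorem \ref{Deninger--Murre projectors} yields $n^{2g}\pi_k = n^k([\Gamma_{n_A}] \circ \pi_k)$, hence $[\Gamma_{n_A}] \circ \pi_k = n^{2g-k}\pi_k$ for $n \neq 0$; this is already the ``in particular'' statement. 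Transposing both eigenvalue relations shows that $\pi_k^t$ satisfies the characterization of $\pi_{2g-k}$ in Theorem \ref{Deninger--Murre projectors}, and since $\{\pi_k^t\}$ are orthogonal idempotents summing to $[\Gamma_{1_A}]^t = [\Gamma_{1_A}]$, uniqueness forces $\pi_k^t = \pi_{2g-k}$.

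For (2), I plan to locate $[u]$ in the Deninger--Murre decomposition via the Fourier transform $f_*$. Taking $a = u$ in \eqref{faS}: because the Poincar\'e bundle is rigidified along the zero section, $i_u^* l = 0$, so $f_*[u] = \exp(0) = 1 \in \Ch^0(\wh A)$, and trivially $\Ch^0(\wh A) = \Ch^0_0(\wh A)$. By Beauville's Fourier-theoretic refinement of the Deninger--Murre decomposition (extended to the relative setting by K\"unnemann \cite{Kun}), $f_*$ identifies $\Ch^g_j(A)$ with $\Ch^j_j(\wh A)$; hence $[u] \in \Ch^g_0(A)$. Combined with part (1), $\pi_k$ projects $\Ch^g(A)$ onto $\Ch^g_{2g-k}(A)$, so $\pi_{k,*}[u]$ vanishes whenever $k \neq 2g$.

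For (3), I use that when $S = \Spec F$, the algebraically trivial classes $\Ch^1(A)_0$ coincide with $\Pic^0(A)_\BQ$, and the classical identity $n_A^* L = n L$ for $L \in \Pic^0(A)$ places $\Pic^0(A)_\BQ \subset \Ch^1_1(A)$. Since $\pi_1$ projects onto the $n^1$-eigenspace of $[\Gamma_{n_A}^t]_* = n_A^*$, which on $\Ch^1(A)$ is exactly $\Ch^1_1(A)$, it restricts to the identity on $\Ch^1(A)_0$.

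The main obstacle is (2): the eigenvalue property of $\pi_k$ alone only places $\pi_{k,*}[u]$ in a prescribed eigenspace, and a separate input -- the Fourier-theoretic identification of the weight of the specific cycle $[u]$ via \eqref{faS} and Beauville's refinement -- is needed to conclude that this eigenspace component actually vanishes rather than merely having a prescribed $n_A^*$-eigenvalue.
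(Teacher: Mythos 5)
Your parts (2) and (3) are correct, and (2) takes a genuinely different route from the paper: the paper deduces (2) in one line from (1) together with the behaviour of the cycle $[u]$ under the correspondence $[\Gamma_{n_A}]$ (the pushforward relation $(\pi_k\circ[\Gamma_{n_A}])_*[u]=n^{2g-k}\pi_{k,*}[u]$ forces the vanishing for $k\neq 2g$), with no Fourier theory needed; you instead locate $[u]$ in the Beauville grading via \eqref{faS} with $a=u$, the rigidification giving $i_u^*l=0$, and Fourier inversion. Both work; your argument is heavier but self-contained and parallels the paper's proof of Lemma \ref{Deninger--Murre projectors1}. For (3) you argue exactly as the paper does. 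For (1) the paper simply cites the remark after \cite[Theorem 3.1]{DM}, so your reconstruction is welcome, but as written it has a gap in the composition bookkeeping.

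The gap: your trick uses $[\Gamma_{n_A}]\circ[\Gamma_{n_A}^t]=n^{2g}[\Gamma_{1_A}]$ and the relation $[\Gamma_{n_A}^t]\circ\pi_k=n^k\pi_k$, and it only yields the one-sided identity $[\Gamma_{n_A}]\circ\pi_k=n^{2g-k}\pi_k$. The opposite composition $[\Gamma_{n_A}^t]\circ[\Gamma_{n_A}]$ is \emph{not} $n^{2g}[\Gamma_{1_A}]$ (it is the sum of the graphs of translations by the $n$-torsion), so the same trick does not give $\pi_k\circ[\Gamma_{n_A}]=n^{2g-k}\pi_k$, which is what the corollary's ``in particular'' asserts; your claim that your identity ``is already'' that statement mixes up the two sides. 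This matters for your uniqueness step as well: setting $\sigma_k:=\pi_{2g-k}^t$, transposing what you have gives $\sigma_k\circ[\Gamma_{n_A}^t]=n^k\sigma_k$, but the other half of the characterization in Theorem \ref{Deninger--Murre projectors}, namely $[\Gamma_{n_A}^t]\circ\sigma_k=n^k\sigma_k$, is the transpose of exactly the missing identity, so the two-sided uniqueness as stated in the appendix cannot be invoked yet. The repair is short: observe that a decomposition of $[\Gamma_{1_A}]$ into eigenvectors of the single operator family $z\mapsto z\circ[\Gamma_{n_A}^t]$ with the distinct eigenvalue characters $n\mapsto n^k$ is automatically unique (eigenvectors for distinct characters are linearly independent; this one-sided uniqueness is in fact Deninger--Murre's formulation). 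This gives $\pi_k^t=\pi_{2g-k}$ from the relation you did verify, and then transposing $[\Gamma_{n_A}^t]\circ\pi_{2g-k}=n^{2g-k}\pi_{2g-k}$ yields $\pi_k\circ[\Gamma_{n_A}]=n^{2g-k}\pi_k$, completing the ``in particular'' on the correct side. With this adjustment your proof of (1), and hence the whole proposal, is sound.
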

   \begin{proof} 
(1)  is  the 3rd remark after \cite[Theorem 3.1]{DM}.

(2) follows from (1) and that $[\Gamma_{{n_A},*}]u=n^{2g}u$.

(3)  follows from Theorem \ref{Deninger--Murre projectors} and  the well-known result ${\Ch^1(A)_0}=\Ch^1_1(A)$. \end{proof}

   \subsection{Relation}
   \begin{lem}\label{Deninger--Murre projectors1}
 For $a\in A(S)$ and $j=0,...,2g$, we have  $$f_*(\pi_{2g-j,*}(a_* [S]))=\frac{( i_a^*l)^j}{j} \quad \text{in}\quad\Ch^j(\wh  A ).$$ 
\end{lem}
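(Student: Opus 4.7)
The plan is to match, term by term in codimension on $\wh A$, two decompositions of $f_*(a_*[S]) \in \Ch^*(\wh A)$: the decomposition $f_*(a_*[S]) = \exp(i_a^*l) = \sum_{j=0}^{2g} (i_a^*l)^j/j!$ coming from Beauville's identity \eqref{faS}, and the image under $f_*$ of the Deninger--Murre decomposition $a_*[S] = \sum_k \pi_{k,*}(a_*[S])$ supplied by Theorem \ref{Deninger--Murre projectors}. (I read the right-hand side of the statement as $(i_a^*l)^j/j!$, which is the natural normalization compatible with the exponential in \eqref{faS}.) The bridge between the two is the fact that $f_*$ intertwines $n_A^*$-eigenspaces on $A$ with $n_{\wh A}^*$-eigenspaces on $\wh A$, shifting the exponent by $2g$.

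Write $z_k := \pi_{k,*}(a_*[S]) \in \Ch^g(A)$. By Theorem \ref{Deninger--Murre projectors} together with $[\Gamma_{n_A}^t]_* = n_A^*$ one has $n_A^* z_k = n^k z_k$ for all $n$, and $a_*[S] = \sum_{k=0}^{2g} z_k$. The crucial auxiliary claim is: if $z \in \Ch^*(A)$ satisfies $n_A^* z = n^m z$ for all $n \in \BZ$, then $n_{\wh A}^* f_*(z) = n^{2g-m} f_*(z)$ for all $n$. This rests on the rigidification identities $(n_A \times \id_{\wh A})^* l = n \cdot l = (\id_A \times n_{\wh A})^* l$ on $A \times_S \wh A$. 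The second identity, flat base change along $p_{\wh A}$ for the Cartesian square whose top arrow is $\id_A \times n_{\wh A}$, and $p_A \circ (\id_A \times n_{\wh A}) = p_A$ together give $n_{\wh A}^* f_*(z) = p_{\wh A, *}(\exp(nl)\cdot p_A^* z)$. The first identity, the projection formula for $n_A \times \id_{\wh A}$ (finite flat of degree $n^{2g}$), $p_{\wh A} \circ (n_A \times \id_{\wh A}) = p_{\wh A}$, and the hypothesis on $z$ together give $p_{\wh A, *}(\exp(nl)\cdot p_A^* z) = n^{2g-m} f_*(z)$. Comparing establishes the claim.

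Applying the claim with $z = z_{2g-j}$ (so $m = 2g-j$) gives $n_{\wh A}^* f_*(z_{2g-j}) = n^j f_*(z_{2g-j})$. Similarly, $n_{\wh A}^*(i_a^*l) = i_a^*((\id_A \times n_{\wh A})^* l) = n\cdot i_a^*l$ yields $n_{\wh A}^*((i_a^*l)^j/j!) = n^j (i_a^*l)^j/j!$. Hence both $f_*(z_{2g-j})$ and $(i_a^*l)^j/j!$ lie in the $n^j$-eigenspace of $n_{\wh A}^*$ for every $n \in \BZ$. Applying $f_*$ to $a_*[S] = \sum_j z_{2g-j}$ and invoking \eqref{faS} yields
\begin{equation*}
\sum_{j=0}^{2g} f_*(z_{2g-j}) = f_*(a_*[S]) = \exp(i_a^*l) = \sum_{j=0}^{2g} \frac{(i_a^*l)^j}{j!},
\end{equation*}
and the common eigenspaces of $\{n_{\wh A}^*\}_{n\in\BZ}$ for distinct eigenvalue functions $n \mapsto n^j$ ($j=0,\ldots,2g$) are linearly independent in $\Ch^*(\wh A)$ (Theorem \ref{Deninger--Murre projectors} for $\wh A$). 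So the two sides agree eigenspace by eigenspace, giving $f_*(\pi_{2g-j,*}(a_*[S])) = (i_a^*l)^j/j!$. The main delicate step is the auxiliary eigenvalue compatibility, which is a standard but careful diagram chase using the rigidification of the Poincar\'e bundle.
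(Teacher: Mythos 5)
Your proof is correct, and your reading of the right-hand side as $(i_a^*l)^j/j!$ is the intended one (the ``$/j$'' in the statement is a typo, as the $j=0$ term already shows). The route differs from the paper's in one genuine respect. Both arguments compare the two decompositions of $f_*(a_*[S])$, namely $\exp(i_a^*l)$ from \eqref{faS} and the image of $a_*[S]=\sum_k \pi_{k,*}(a_*[S])$, and then match term by term; but the paper separates the terms by \emph{codimension}: it notes $\pi_{2g-j,*}(a_*[S])\in \Ch^g_j(A)$ and quotes \cite[Lemma 2.18]{DM}, which places the Fourier transform of $\Ch^g_j(A)$ inside $\Ch^j(\wh A)$, after which comparing graded pieces of $\Ch^*(\wh A)$ is automatic. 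You instead separate the terms by the $n_{\wh A}^*$-eigenvalue, proving the intertwining (eigenvalue $n^m$ for $n_A^*$ goes to eigenvalue $n^{2g-m}$ for $n_{\wh A}^*$) by hand from the rigidification identities $(n_A\times\id_{\wh A})^*l=n\,l=(\id_A\times n_{\wh A})^*l$ together with flat base change and the projection formula; this is in effect a self-contained proof of the part of \cite[Lemma 2.18]{DM} that the paper cites. What each buys: your version needs nothing beyond the standard Poincar\'e-class identities, while the paper's is a three-line citation; conversely, matching by codimension is completely formal, whereas matching by eigenvalue requires the directness of the common eigenspace decomposition on $\wh A$, which you correctly justify via Theorem \ref{Deninger--Murre projectors} applied to $\wh A$. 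Both arguments are valid.
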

     \begin{proof} By Theorem \ref{Deninger--Murre projectors},  $\pi_{2g-j,*}(a_* [S])\in \Ch^g_j(A)$.    By \cite[Lemma 2.18]{DM}, $f_*(\pi_{2g-j,*}(a_* [S]))\in \Ch^j(\wh A)$. Taking sum over $j$'s and comparing with \eqref{faS}, the lemma follows.
     \end{proof}
    The biduality theorem for abelian schemes   implies that  
 \begin{equation}\label{a+b} 
 i_a^*l+ i_b^*l= i_{a+b}^*l,\quad \forall a, b \in A(S).
 \end{equation} 
Then by Lemma \ref{Deninger--Murre projectors1}, \eqref{a+b} and   Fourier inversion \cite[Corollary 2.22]{DM}, we have the following.
   \begin{cor}\label{Deninger--Murre projectors2}
 For $a,b\in A(S)$, we have  $$ \pi_{2g-1,*}(a_* [S])+\pi_{2g-1,*}(b_* [S])=\pi_{2g-1,*}((a+b)_* [S]).$$ 
\end{cor}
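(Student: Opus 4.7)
The plan is to reduce the identity to the additivity of $a \mapsto i_a^* l$ via the Fourier transform, then invert. Concretely, I would apply $f_* = \exp(l)_*$ to both sides of the asserted equation; by the linearity of pushforward it suffices to check the resulting equality in $\Ch^1(\wh A)$.

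First I would invoke Lemma \ref{Deninger--Murre projectors1} with $j=1$, which gives, for any $c \in A(S)$, the identity $f_*\bigl(\pi_{2g-1,*}(c_*[S])\bigr) = i_c^* l$ in $\Ch^1(\wh A)$. (Note that the statement of the lemma records the degree $j$ piece of $f_*(a_*[S]) = \exp(i_a^* l)$, since $\pi_{2g-j,*}(a_*[S])$ lies in $\Ch^g_j(A)$ and $f_*$ shifts degrees as in \cite[Lemma 2.18]{DM}.) Applying this to $c = a$, $c = b$ and $c = a+b$ converts the desired equation into $i_a^* l + i_b^* l = i_{a+b}^* l$, which is precisely \eqref{a+b}.

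Finally I would use Fourier inversion \cite[Corollary 2.22]{DM}, which guarantees that $f_* \colon \Ch^*(A) \to \Ch^*(\wh A)$ is an isomorphism (up to the usual sign coming from $(-1_A)^*$ and biduality). In particular, $f_*$ is injective, so the equality of the Fourier transforms lifts to the desired equality of the original classes in $\Ch^g(A)$.

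The main obstacle is essentially bookkeeping rather than conceptual: one must be careful that the degree count in Lemma \ref{Deninger--Murre projectors1} really isolates $\pi_{2g-1,*}$ when one takes the codimension-one component of $f_*(a_*[S]) = \exp(i_a^* l)$, and that Fourier inversion is being applied in the relative setting over $S$. Both are already handled by the cited results of Deninger--Murre and K\"unnemann, so no new input is needed.
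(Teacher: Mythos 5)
Your proposal is correct and follows the paper's own argument exactly: apply Lemma \ref{Deninger--Murre projectors1} (with $j=1$) to translate each term into $i_c^* l$, use the additivity \eqref{a+b}, and conclude by the injectivity of the Fourier transform from Fourier inversion \cite[Corollary 2.22]{DM}. No differences worth noting.
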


\end{document}